\newtheorem{theorem}{Theorem}
\newtheorem{proposition}{Proposition}[section]
\newtheorem{lemma}[proposition]{Lemma}
\newtheorem{corollary}[proposition]{Corollary}
\newtheorem{conjecture}[proposition]{Conjecture}
\newtheorem{LEM}[proposition]{Lemma}
\newtheorem{CONJ}[proposition]{Conjecture}
\theoremstyle{definition}
\newtheorem{definition}[proposition]{Definition}
\newtheorem{example}[proposition]{Example}
\theoremstyle{remark}
\newtheorem{remark}[proposition]{Remark}
\newcommand{\ZZ}{\mathbb{Z}}
\newcommand{\RR}{\mathbb{R}}
\newcommand{\Ss}{\mathbb{S}}
\newcommand{\fW}{\mathfrak{W}}
\newcommand{\Fl}{\operatorname{Fl}}
\newcommand{\Lo}{\operatorname{Lo}}
\newcommand{\Up}{\operatorname{Up}}
\newcommand{\GL}{\operatorname{GL}}
\newcommand{\SO}{\operatorname{SO}}
\newcommand{\Spin}{\operatorname{Spin}}
\newcommand{\swminor}{\operatorname{swminor}}
\newcommand{\mult}{\operatorname{mult}}
\newcommand{\Bru}{\operatorname{Bru}}
\newcommand{\transpose}{\top}
\newcommand{\frakl}{\mathfrak{l}}
\newcommand{\inv}{\operatorname{inv}}
\newcommand{\iti}{\operatorname{iti}}
\newcommand{\IdI}{\operatorname{Id}}
\newcommand{\Quat}{\operatorname{Quat}}
\newcommand{\Pos}{\operatorname{Pos}}
\newcommand{\Neg}{\operatorname{Neg}}
\newcommand{\Cont}{\operatorname{Cont}}
\newcommand{\rk}{\operatorname{rk}}
\newcommand{\bQ}{\mathbf{Q}}
\newcommand{\cC}{\mathcal{C}}
\newcommand{\cU}{\mathcal{U}}
\newcommand{\cL}{\mathcal{L}}
\newcommand{\cJ}{\mathcal{J}}
\newcommand\llbracket{[\![}
\newcommand\rrbracket{]\!]}
\newcommand{\diag}{\mathop\mathrm{diag}\nolimits}
\newcommand{\bR}{\mathbb{R}}
\newcommand{\ee}{\end{equation}}
\newcommand {\ga}{\gamma}
\newcommand {\Ga}{\Gamma}
\newcommand{\C}{\mathcal C}
\newcommand{\cH}{\mathcal H}
\newcommand \bL {\mathbf L}
\def \bP {\mathbb RP}
\def\Ff{{\mathcal F}}
\def\MyFig{0.6\textwidth}
\begin{document}
          \numberwithin{equation}{section}

          \title[Grassmann convexity and multiplicative Sturm theory, revisited]
          {Grassmann convexity and \\ multiplicative Sturm theory, revisited}

          \author[N.~Saldanha]{Nicolau Saldanha}
\address{ Departamento de Matem\'atica, PUC-Rio
R. Mq. de S. Vicente 225, Rio de Janeiro, RJ 22451-900, Brazil}
\email{ saldanha@puc-rio.br}

\author[B.~Shapiro]{Boris Shapiro}
\address{Department of Mathematics, Stockholm University, SE-106 91
Stockholm,
         Sweden}
\email{shapiro@math.su.se}

\author[M.~Shapiro]{Michael  Shapiro}
\address{Department of Mathematics, Michigan State University,
          East Lansing, MI 48824-1027}
\email{mshapiro@math.msu.edu}

\dedicatory{To  the late Vladimir Arnold, who started all this}
\date{}

\keywords{disconjugate linear ordinary differential equations, Grassmann curves,  osculating flags, Schubert calculus} 
\subjclass[2010]{Primary 34B05, \; Secondary   52A55}

\begin{abstract}
In this  paper we settle a special case of
the  Grassmann convexity conjecture formulated in \cite{ShSh}.
We present a conjectural formula  for the maximal  total number of real zeros
of the consecutive Wronskians  of an arbitrary fundamental solution to a disconjugate linear ordinary differential equation with real time
(compare with  \cite {ShShSur}).
We  show that this formula gives the lower bound for the required total number of real zeros for equations of an arbitrary order and,  using our results on the Grassmann convexity,  we prove that the aforementioned formula is correct  for equations  of orders $4$ and $5$.  
 \end{abstract}

\maketitle

\bigskip

\section{Introduction and main results} 

Our subject of study  is related to the PhD theses of the second and  third authors defended in the early 90s (see \cite {Sh, MShPhD}). Namely, the thesis of the second author contains  Conjecture~\ref{conj:Wr}, see below, but the presented argument  which is supposed to prove it  is false.  The statement itself is still open and (if proven) would be  of  fundamental importance to the general qualitative theory of linear ordinary differential equations with real time. The thesis of the third author contains  a number of  Schubert calculus problems relevant to Conjecture~\ref{conj:Wr}. Over the years the authors made several attempts to  
settle  it and,  in particular,  worked out  some reformulations and special cases.  This paper contains a number of new results in  that direction. (In what follows, we will label conjectures, theorems and lemmas borrowed from the existing literature by letters. Results  and conjectures labelled by numbers are new). 

\smallskip
We start with the following classical definition, see e.g. \cite{Co}.

\begin{definition}  
{A linear ordinary homogeneous differential equation 
\begin{equation}\label{eq:disc}
y^{(n)}+p_1(x)y^{(n-1)}+\ldots+p_n(x)y=0
\end{equation}
of order $n$ with real-valued continuous coefficients $p_i(x)$ 
defined on an interval $I \subseteq \bR$ is called \emph{disconjugate  on $I$}
if any of  its nontrivial solutions has at most $(n-1)$ zeros on $I$
counting multiplicities.
(The interval $I$ can either be open or closed).}
\end{definition}

\begin{CONJ}[Upper bound on the number of real zeros of a Wronskian]
\label{conj:Wr}
Given any equation \eqref{eq:disc} disconjugate on $I$, 
a positive integer $1\le k \le n-1$,
and an arbitrary $k$-tuple $(y_1(x),y_2(x),\dots, y_k(x))$
of its linearly independent solutions, the number of real zeros of
$\det(W(y_1(x),y_2(x),\dots, y_k(x)))$ on $I$
counting multiplicities does not exceed $k(n-k)$.
\end{CONJ}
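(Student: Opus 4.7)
The plan is to recast Conjecture~A as a statement about real intersections of a fixed subspace with a convex curve in a Grassmannian, and then to appeal to the Grassmann convexity program of \cite{ShSh}.

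First, I would fix a fundamental system $f_1,\dots,f_n$ of \eqref{eq:disc} and identify the solution space $\mathrm{Sol}$ with $\R^n$. For each $x\in I$ define the descending osculating flag $F_\bullet(x)$ by
\[
F_j(x)=\{y\in\mathrm{Sol}:y(x)=y'(x)=\cdots=y^{(j-1)}(x)=0\},\qquad \dim F_j(x)=n-j.
\]
Disconjugacy on $I$ forces $F_\bullet(x)$ to be a genuine complete flag at every $x$ and to depend smoothly on $x$; the resulting curve in the flag manifold of $\R^n$ is a \emph{convex curve} in the sense of \cite{ShSh}. Let $V\subset\mathrm{Sol}$ be the $k$-plane spanned by $y_1,\dots,y_k$.

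Next, I would unwind the definition of the Wronskian to obtain
\[
\det W(y_1,\dots,y_k)(x_0)=0 \quad \Longleftrightarrow \quad V\cap F_k(x_0)\neq 0,
\]
since both sides express the existence of a nonzero $y\in V$ vanishing to order at least $k$ at $x_0$. Because $\dim V=k$ and $\dim F_k(x_0)=n-k$, the condition $L\cap V\neq 0$ cuts out a codimension-$1$ Schubert subvariety $\Sigma_V\subset\mathrm{Gr}(n-k,n)$. Hence the real zeros of the Wronskian on $I$, counted with multiplicity, are precisely the contact points of the curve $\psi:x\mapsto F_k(x)\in\mathrm{Gr}(n-k,n)$ with the fixed Schubert divisor $\Sigma_V$; a local computation identifies the analytic order of vanishing of the Wronskian at $x_0$ with the local intersection multiplicity of $\psi$ and $\Sigma_V$ at $\psi(x_0)$.

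It therefore suffices to establish the \emph{Grassmann convexity bound}: any convex curve $\psi:I\to\mathrm{Gr}(n-k,n)$ has total contact number with any Schubert divisor $\Sigma_V$ at most $k(n-k)=\dim\mathrm{Gr}(k,n)$, which is the ``generic'' intersection count predicted by Schubert calculus. This is the main obstacle. In full generality it is the Grassmann convexity conjecture, proved only in restricted ranges of $(k,n)$; the abstract suggests that the present paper handles the cases relevant to $n=4$ and $5$. A natural attack on the remaining cases would be to degenerate $V$ to a transverse translate of one of the osculating subspaces $F_k(x_0)$ and run a Schubert-calculus induction, tracking how contact multiplicities redistribute, or alternatively to project from a point on the underlying rational-normal-like curve and try to reduce to an equation of order $n-1$. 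In either direction the delicate point is that convexity is a global real-analytic condition, easily destroyed by such degenerations, so one must argue rigorously that no real intersection escapes into a complex conjugate pair during the limit---this is where the bound fails to be purely enumerative and becomes genuinely a real-geometric theorem.
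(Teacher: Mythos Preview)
Your reformulation is correct and matches the paper's own discussion: the equivalence between Conjecture~A and the Grassmann convexity conjecture (Conjecture~B in the paper) is exactly what \S\ref{sec:grassmann} spells out. But you have not written a proof; you have written a translation together with an honest admission that the translated statement is open. The paper is equally candid: Conjecture~A is \emph{not} proved here in general, only for $k=2$ and $k=n-2$ (Theorem~\ref{th:main}).

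The substantive content of the paper lies in how the $k=2$ case is actually established, and that argument bears no resemblance to the degeneration/Schubert-induction sketches you propose. The paper projects a flag-convex curve $\Gamma:I\to\Lo_n^1$ to its last two rows, obtaining a curve of $2\times n$ matrices, equivalently an $n$-tuple of nonzero vectors in $\RR^2$. Normalizing to $\Ss^1$ yields a labelled point configuration encoded by an \emph{admissible cyclic word} $w$. The key invention is a combinatorial \emph{rank function} $\rk(w)=2\Cont(w)+s-1$, built from maximal monotone subsequences of $w$. One shows $\rk(w_{-})=2(n-2)$ for the totally negative word, $\rk(w_{+})=0$ for the totally positive one, and---by a case analysis of elementary ``admissible moves''---that $\rk$ is nonincreasing along any flag-convex curve and strictly drops whenever $m_2$ changes sign. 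Since the curve can be extended so that it begins in $\Neg$ and ends in $\Pos$, this forces at most $2(n-2)$ zeros. The case $k=n-2$ follows from $k=2$ by the involution $\Gamma\mapsto P_\eta\Gamma^{-\top}P_\eta$.

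So the gap in your proposal is not a subtle error but the absence of any mechanism that actually bounds the number of intersections. Your suggested degeneration and projection ideas are reasonable heuristics, but the paper's successful argument for $k=2$ is a discrete monotone-quantity trick, not a limiting or enumerative one; whether anything like it extends to general $k$ is precisely the open question the authors flag at the end of the introduction.
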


Here 
%{\color{red}
$$W(y_1(x),y_2(x),\dots, y_k(x))=
\begin{pmatrix}  
y_1(x) & y'_1(x) & \dots &y^{(k-1)}_1(x) \\ 
y_2(x) & y'_2(x) & \dots &y_2^{(k-1)}(x)\\
\vdots & \vdots & \ddots & \vdots\\
y_k(x) & y_k'(x) & \dots &y_k^{(k-1)}(x)\
\end{pmatrix}$$%}
is the Wronskian matrix of the $k$-tuple  $(y_1(x),y_2(x),\dots, y_k(x))$. 

\smallskip

Cases $k=1$ and $k=n-1$ of Conjecture \ref{conj:Wr} are straightforward,
but not very illuminating.
The simplest non-trivial case $k=2,\; n=4$ of Conjecture~\ref{conj:Wr}
has been settled in \cite {ShSh}.
The first main result of the present paper extends these results.

\begin{theorem}\label{th:main} 
Conjecture~\ref{conj:Wr} holds for $k=2$ and $k=n-2$ for any $n\ge 3$.  
\end{theorem}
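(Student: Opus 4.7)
The plan is to recast the zero-counting problem as a question about intersections of a Grassmann curve with a Schubert divisor, and then to invoke the special case of the Grassmann convexity conjecture of \cite{ShSh} that is the main technical result of this paper.

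\textbf{Step 1: translation to the Grassmannian.}  Fix a fundamental system $\phi_1,\dots,\phi_n$ of \eqref{eq:disc} and identify the solution space with $V\cong\R^n$.  For each $x\in I$ set
\[
F_k(x) \;=\; \{\,y\in V \;:\; y(x)=y'(x)=\dots=y^{(k-1)}(x)=0\,\}\;\in\;G(n-k,n).
\]
The hypothesis that \eqref{eq:disc} is disconjugate on $I$ is precisely the statement that the map $x\mapsto F_k(x)$ is a \emph{convex Grassmann curve} in the sense of \cite{ShSh}.  A $k$-tuple $(y_1,\dots,y_k)$ of linearly independent solutions spans a subspace $L\in G(k,n)$, and by the very definition of the Wronskian,
\[
\det W(y_1,\dots,y_k)(x)\;=\;0\quad\Longleftrightarrow\quad L\cap F_k(x)\;\neq\;0,
\]
with matching local multiplicities (both sides are the order of vanishing at $x$ of the same Pl\"ucker section).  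Hence the number of real zeros on $I$ of $\det W(y_1,\dots,y_k)$, counted with multiplicity, equals the intersection number of the curve $F_k\colon I\to G(n-k,n)$ with the Schubert hypersurface $\Sigma_L=\{M\in G(n-k,n):M\cap L\neq 0\}$.

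\textbf{Step 2: the case $k=2$.}  Applying Step~1 with $k=2$ reduces the problem to bounding by $2(n-2)=k(n-k)$ the number of intersections of a convex Grassmann curve $F_2\colon I\to G(n-2,n)$ with a Schubert divisor $\Sigma_L$.  This is exactly the case of the Grassmann convexity conjecture settled in the present paper, i.e.\ the promised special case of the conjecture from \cite{ShSh}.  Combined with the multiplicity statement of Step~1 this immediately yields the desired bound for $k=2$.

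\textbf{Step 3: duality for $k=n-2$.}  The adjoint equation of \eqref{eq:disc} is again an order-$n$ linear ODE that is disconjugate on $I$, with solution space naturally identified with $V^*$.  An $(n-2)$-tuple $(y_1,\dots,y_{n-2})$ of linearly independent solutions of \eqref{eq:disc} spans a subspace $L\subset V$ whose annihilator $L^\perp\subset V^*$ is $2$-dimensional and spanned by two linearly independent solutions of the adjoint equation.  A classical Frobenius-type identity identifies $\det W(y_1,\dots,y_{n-2})$, up to multiplication by a nowhere-vanishing factor (a power of the full Wronskian), with the $2$-Wronskian of any basis of $L^\perp$.  Hence the zero sets match, with multiplicities, and the $k=n-2$ case follows from the $k=2$ case applied to the adjoint equation.

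\textbf{Main obstacle.}  Steps~1 and~3 are essentially formal, so the genuine difficulty is concentrated in Step~2: establishing the $k=2$ case of the Grassmann convexity conjecture in $G(2,n)$.  This is the technical heart of the paper and it is here that the convex-curve machinery developed from \cite{ShSh} must be deployed; Theorem~\ref{th:main} is essentially an ODE corollary of that Grassmann-geometric result.
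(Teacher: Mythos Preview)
Your reductions in Steps~1 and~3 are correct and match the paper's viewpoint (the paper carries out Step~3 via the matrix involution $\Gamma\mapsto P_\eta\Gamma^{-\top}P_\eta$ of Remark~\ref{remark:star}, which is the same duality as your adjoint-equation argument). The problem is Step~2, which is circular. You write that the $k=2$ bound ``is exactly the case of the Grassmann convexity conjecture settled in the present paper'' and then invoke it. But Theorem~\ref{th:main} \emph{is} that case: the paper states explicitly in \S\ref{sec:grassmann} that Conjectures~\ref{conj:Wr} and~\ref{conj:Gr} are equivalent, so ``$k=2$ of Conjecture~\ref{conj:Wr}'' and ``$k=2$ Grassmann convexity'' are the same assertion. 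The reference \cite{ShSh} only settled $n=4$; the general $n\ge 3$ statement is new here and is precisely what the proof of Theorem~\ref{th:main} must supply. Your proposal contains no argument for it.

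What the paper actually does for Step~2 is substantial and combinatorial. It projects a flag-convex curve $\Gamma:I\to\Lo_n^1$ to its last two rows, obtaining a curve $\Gamma_2$ in the space $\C$ of $2\times n$ matrices. The generic stratum $\C_3\subset\C$ decomposes into contractible pieces labeled by admissible cyclic words $w\in\fW^{+}$ (configurations of $n$ antipodal pairs on $\Ss^1$). A \emph{rank function} $\rk(w)\in\{0,1,\dots,2(n-2)\}$ is defined combinatorially (Definition~\ref{def:rank}); the totally negative word has rank $2(n-2)$ and the totally positive word has rank $0$ (Lemma~\ref{lm:rankstep}). The key computation (Proposition~\ref{prop:rank_move}) is a case analysis showing that along a flag-convex curve the rank never increases and strictly decreases whenever $m_2$ changes sign. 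After extending $\Gamma$ so that it starts in $\Neg$ and ends in $\Pos$ (Corollary~\ref{cor:extension}), the rank drops from $2(n-2)$ to $0$, forcing at most $2(n-2)$ zeros of $m_2$. None of this machinery --- the cyclic-word labeling, the rank function, or the monotonicity analysis --- appears in your outline, and without it there is no proof.
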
 

The case $k = n-2$ follows easily from the case $k = 2$;
see Remark \ref{remark:star}.

\smallskip

We present an equivalent statement to Conjecture~\ref{conj:Wr} 
in a different language.
This version is used in this paper and is also in some references
(including \cite{GoSa0}).
Let $\Lo_n^1$ be the nilpotent Lie group of lower triangular $n\times n$
real matrices whose diagonal entries equal $1$.
Let $\cJ$ be the set of $n\times n$ real matrices $A$
such that the entry $A_{ij}$
is positive if $i = j+1$ and $0$ otherwise.
A smooth curve $\Gamma: [0,1] \to \Lo_n^1$ is {\em flag-convex}
(or, sometimes, just {\em convex})
if,  for all $t \in [0,1]$,  $(\Gamma(t))^{-1}\Gamma'(t) \in \cJ$.
For a flag-convex curve $\Gamma$ and an integer $k$, $0 < k < n$,
let 
\begin{equation}
\label{equation:mk}
m_k: [0,1] \to \RR, \qquad
m_k(t) = \det(\swminor(\Gamma(t),k)). 
\end{equation}
Here $\swminor(L,k)$ is the southwest $k\times k$ submatrix of $L$, 
i.e., the submatrix formed by its last $k$ rows and its first $k$ columns. 
(If the curve $\Gamma$ is not obvious from the context,
we write $m_{\Gamma,k} = m_k$.) 
Conjecture \ref{conj:Wr} above is equivalent to saying that
the number of zeroes $t \in [0,1]$
of the smooth function $m_k: [0,1] \to \RR$ is at most $k(n-k)$;
here, as above, zeroes are counted with multiplicities.
This equivalence will be additionally clarified in \S\ref{sec:La}.

\smallskip

Thus, Theorem \ref{th:main} claims that, 
for any flag-convex function $\Gamma: I \to \Lo_n^1$, 
the functions $m_2$ and $m_{n-2}$ have at most $2(n-2)$ zeroes each.
Our second result is an inequality in the opposite sense;
we state a related result in Corollary \ref{coro:ineq} below.

% \begin{theorem}\label{th:second} 
% For any $n \ge 3$ there exists a convex curve $\Gamma: [0,1] \to \Lo_n^1$
% such that, for all $k$,
% the function $m_k$ has precisely $k(n-k)$ zeroes, all simple.
% Furthermore, if $k_0 \ne k_1$ then $m_{k_0}$ and $m_{k_1}$
% have no common zeroes.
% \end{theorem} 

\begin{theorem}
\label{th:La}
Consider a smooth flag-convex curve $\Gamma_\bullet: I \to \Lo_n^1$
(where $I \subset \RR$ is a non-degenerate interval).
Then, for any open subinterval $I_1 \subset I$,
there exists a matrix $L_1  \in \Lo_n^1$ such that,
for $\Gamma_1(t) = L_1 \Gamma_\bullet(t)$ and $m_k = m_{\Gamma_1,k}$,
the following properties hold:
\begin{enumerate}
\item{all roots of each $m_k$ in $I$ are simple and
belong to $I_1$;}
\item{they are distinct:
if $k_1 \ne k_2$ then $m_{k_1}$ and $m_{k_2}$ have no common roots;}
\item{for each $k$, the function $m_k$ admits precisely $k(n-k)$
roots in $I$.}
\end{enumerate}
\end{theorem}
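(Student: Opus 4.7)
The plan is to combine three ingredients: (i) realizing the count $k(n-k)$ through a Schubert-theoretic construction, (ii) localizing the resulting zeros inside $I_1$ by compactness, and (iii) achieving simplicity and pairwise disjointness of zero sets by a genericity argument. I would begin by noting that $m_{L\Gamma_\bullet,k}(t)$ depends polynomially on the entries of $L$ and real-analytically on $t$, and that $m_k(t)=0$ precisely when the $k$-dimensional row-span $V_k(t)$ of the last $k$ rows of $L\Gamma_\bullet(t)$ meets $\spanl(e_{k+1},\ldots,e_n)$ nontrivially---a Schubert divisor condition on the associated Grassmannian curve $t\mapsto V_k(t)\in\operatorname{Gr}(k,n)$. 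Left multiplication by $L$ translates this curve in $\operatorname{Gr}(k,n)$ (equivalently, moves the reference flag), so the task reduces to finding $L_1$ for which the translated curve meets each Schubert divisor transversally in $k(n-k)$ points of $I_1$.

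For achievability I would fix a reference time $t_\star\in I_1$ and construct $L_0\in\Lo_n^1$ so that $L_0\Gamma_\bullet(t_\star)$ lies in the opposite Bruhat cell indexed by the longest element $w_0$ of $S_n$. Flag-convexity of $\Gamma_\bullet$ should then force the curve $L_0\Gamma_\bullet(\cdot)$ to traverse a sequence of adjacent Bruhat cells near $t_\star$ in the order prescribed by a reduced decomposition of $w_0$; each wall crossing yields a sign change of some $m_k$, and an index-counting argument shows that exactly $k(n-k)$ of these crossings contribute to $m_k$. This furnishes at least $k(n-k)$ real zeros of each $m_k$ in a neighborhood of $t_\star$ contained in $I_1$.

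For the localization and genericity pieces I would proceed as follows. To push any stray zeros into $I_1$, I consider the one-parameter deformation $L(s)=\exp(sX)L_0$; since the zeros produced above are simple for generic $L_0$, the implicit function theorem governs their dependence on $s$. Using compactness of $I\setminus I_1$ and openness of the ``big cell'' where all $m_k$ are nonzero, I would choose $X$ so that $L(s)\Gamma_\bullet(t)$ remains in the big cell for every $t\in I\setminus I_1$ and every small $s$, thereby concentrating all zeros inside $I_1$. For simplicity and disjointness, the locus of $L$ for which some $m_{L\Gamma_\bullet,k}$ has a multiple root, or for which $m_{L\Gamma_\bullet,k_1}$ and $m_{L\Gamma_\bullet,k_2}$ share a root with $k_1\neq k_2$, is a finite union of real-analytic subvarieties of positive codimension in $\Lo_n^1$; a small generic perturbation of $L_0$ therefore preserves the count $k(n-k)$ (simple zeros being stable) while securing conditions (1) and (2).

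The main obstacle I anticipate is the achievability step: producing a \emph{single} $L_0$ for which $m_k$ admits $k(n-k)$ zeros \emph{simultaneously} for every $k\in\{1,\ldots,n-1\}$. For a fixed $k$ this reduces to an intersection-theoretic computation in $\operatorname{Gr}(k,n)$, but coupling all $k$ together requires a fine understanding of how a flag-convex curve interacts with the full Bruhat stratification of the flag variety---essentially the multiplicative Sturm-type statement motivating the paper. A natural strategy is induction on $n$, combined with explicit reduced-word decompositions of $w_0$ and the Grassmann-convexity bounds for small $k$ supplied by Theorem \ref{th:main}.
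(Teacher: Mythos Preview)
Your genericity argument for distinctness (item (2)) is close in spirit to the paper's, and your instinct to anchor the construction at a reference time $t_\star\in I_1$ is right. The genuine gap is in the achievability step, and the proposed mechanism there is incorrect.

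You want to pick $L_0$ so that $L_0\Gamma_\bullet(t_\star)$ lies in the deepest Bruhat stratum (in the paper's notation, $\Bru_e=\{\IdI\}$, i.e.\ $L_0=\Gamma_\bullet(t_\star)^{-1}$), and then argue that flag-convexity makes the curve $t\mapsto L_0\Gamma_\bullet(t)$ ``traverse a sequence of adjacent Bruhat cells near $t_\star$'', yielding $k(n-k)$ sign changes of $m_k$. This does not happen. By Lemma~\ref{lem:ShGoSa0}, for $t$ slightly larger (resp.\ smaller) than $t_\star$ the curve is in $\Pos$ (resp.\ $\Neg$), both of which sit inside the open cell $\Bru_\eta$. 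Thus the curve meets the non-open strata only at the single instant $t=t_\star$; there are no wall crossings in a punctured neighborhood. What you get is one root of $m_k$ at $t_\star$ of multiplicity $k(n-k)$, not $k(n-k)$ simple roots.

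The paper's achievability argument perturbs $L_0$, not $t$. Starting from $L_0=\IdI\in\Bru_e$ (after normalizing so that $\Gamma_\bullet(t_\star)=\IdI$), one walks up a reduced word $\eta=a_{i_1}\cdots a_{i_\ell}$: at each step replace $L$ by $L\lambda_{i_j}(\tau)$ for small $\tau\neq 0$, moving $L$ from $\Bru_{\rho_{j-1}}$ to $\Bru_{\rho_j}$. The key input is Theorem~4 and Lemma~2.4 of \cite{GoSa0}: if $L\in\Bru_\rho$ then $t=t_\star$ is a root of $m_k$ of multiplicity $\mult_k(\eta\rho)$, and a single Bruhat step $\rho_0\triangleleft\rho_1=\rho_0 a_j$ drops this multiplicity by exactly one for those $k$ with $i_0\le k<i_1$ (where $a_j\sigma_1=(i_0\ i_1)\sigma_1$). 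That missing unit must reappear as a simple real root near $t_\star$ (controlled by the sign of $m_k^{(\mu_k)}$). After $\ell=\binom{n}{2}$ steps one reaches $L_1\in\Bru_\eta$ with all $k(n-k)$ roots of each $m_k$ simple and near $t_\star$. Localization to $I_1$ is then automatic: the conditions $\Gamma_1(t_+)\in\Pos$, $\Gamma_1(t_-)\in\Neg$ are open and preserved at each small step, forcing all roots into $(t_-,t_+)\subset I_1$; no a~posteriori deformation is needed. Your proposed induction on $n$ and appeal to Theorem~\ref{th:main} are not what is used, and since the $m_k$ are not polynomials for a general flag-convex curve there is no degree bound to leverage.
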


In this case the total number of roots
of all functions $m_k: I \to \RR$ is $\frac{n^3-n}{6}$.

\medskip

\medskip The structure of this paper is as follows.
In \S\ref{sec:grassmann} we provide context for the results above,
compare Conjecture \ref{conj:Wr} above
with the Grassmann convexity conjecture
and obtain corollaries of our main results
which, we hope, will further motivate their interest.
We start \S\ref{sec:proofs} by reviewing and motivating
the constructions above.
We also recall the concept of {\em total positivity},
which will be used in the proof of both main theorems.
There is of course a vast literature concerning
the subject of total positivity (among many others, \cite{bfz}).
We define the set $\fW$ of {\em admissible cyclic words}
and a correspondence from an open dense subset of $\Lo_n^1$ to $\fW$.
We define {\em admissible moves} in $\fW$ (Definition \ref{def:admissible}).
The definition is combinatorial (and simple) and
when following a flag-convex curve $\Gamma$ we perform admissible moves
(Lemma \ref{lemma:admissible}).
We then introduce our main technical tool,
the {\em rank function} (Definition \ref{def:rank}).
The rank function is integer valued and defined for admissible cyclic words;
the definition is combinatorial and elementary, but long.
Several basic properties are given,
which admit simple but sometimes long proofs.
For instance, the rank of the totally positive word is $0$
and the rank of the totally negative word is $2(n-2)$;
other words give intermediate values
(Lemma \ref{lm:rankstep}).
The crucial property is that,
when following a flag-convex curve $\Gamma$,
the rank of $\Gamma(t)$ is always non-increasing
and is strictly decreasing at roots of $m_2$
(Proposition \ref{prop:rank_move}).
The proof of Theorem~\ref{th:main} is now easy.

In \S\ref{sec:La} we prove Theorem \ref{th:La}.
We first recall several concepts and results from \cite{GoSa0}.
In particular, we define the {\em multiplicity vector}
$\mult(\sigma)$ of a permutation $\sigma \in S_n$.
The definition of $\mult(\sigma)$ is combinatorial; however 
 Theorem 4 from \cite{GoSa0} provides
an algebraic or geometric interpretation.
Indeed, if $\Gamma$ is flag-convex and $\Gamma(t_0) \in \Bru_\sigma$
then $\mult_k(\sigma)$ is the multiplicity of the zero $t = t_0$
of the function $m_k$.
We use the notation $\Bru_\sigma$ for the Bruhat cell
corresponding to the permutation $\sigma$
(see again \cite{GoSa0}).
We first state a warm-up special case, Proposition \ref{prop:L}.
We then prove additional statements
(Lemmas \ref{lemma:goodstep} and \ref{lemma:goodstepLa})
concerning pairs of permutations
$\sigma_0 \triangleleft \sigma_1 = \sigma_0 a_i$:
here $\triangleleft$ denotes the Bruhat order and
$a_i$ is a transposition, a generator of $S_n$.
The change in multiplicities between $\sigma_0$ and $\sigma_1$
is described by Lemma 2.4, also from \cite{GoSa0}.
Only the two results above are needed from \cite{GoSa0};
duplicating the proof of these results here would lead us too far astray.
% In \S\ref{sec:La}
A simple induction then settles
Proposition~\ref{prop:L} and Theorem~\ref{th:La}
(the induction is described in
Lemmas \ref{lemma:goodmatrix} and \ref{lemma:goodmatrixLa}).
Besides the references we already mentioned other relevant results
can be found in e.g. \cite{SeSh} and \cite{BaMaPo}.

We finish the introduction with the following tantalizing question: 

\smallskip
\noindent
{\it Is it possible to extend the present approach (see especially  \S\ref{sec:proofs}) 
from the case of $G_{2,n}$ to other Grassmannians?} 
\medskip

\noindent
Acknowledgements.
The authors thank Victor Goulart for his help with the revision of the text.
The first author wants to acknowledge support of CNPq,
CAPES and Faperj (Brazil) and
to express his sincere gratitude to
the Department of Mathematics, Stockholm University
for the hospitality in November 2018.  
The second author wants to acknowledge
the financial support of his research provided by
the Swedish Research Council grant 2016-04416.
The third author is supported by the NSF grant DMS-1702115.

\section{The Grassmann convexity conjecture}
\label{sec:grassmann}

Conjecture~\ref{conj:Wr}  has an equivalent reformulation
called the {\it Grassmann convexity conjecture}
first suggested in \cite {ShSh}, Main Conjecture 1.1.
%, (see Conjecture~\ref{conj:Gr}   below). 
To state it, we need  some further  definitions. 

\begin{definition} {A smooth closed curve $\ga: \Ss^1\to \bP^{n-1}$ is 
called {\it
locally convex} if, for any hyperplane $H\subset 
\bP^{n-1}$,  the local
 multiplicity of the intersection of $\ga$ with $H$ 
at any of the intersection points $p\in \ga \cap H$ 
does not exceed
$n-1=\dim \bP^{n-1}$ and {\it globally convex} if the
above condition holds for the sum of all local multiplicities, see e.g. \cite{ShSh}.}
\end{definition}

 Below we will often refer to globally convex curves simply as {\it convex}. The above notions are directly generalized to smooth non-closed curves, i.e. $\ga: I \to \bP^{n-1}$.
 
\begin{remark}
{Local convexity of $\ga$ is an easy requirement equivalent to the
non-degeneracy of the
osculating Frenet $(n-1)$-frame of $\ga$, i.e. to the linear independence 
of
$\ga^\prime(t),\dots ,\ga^{(n-1)}(t)$ at all points $t\in \Ss^1$. Global convexity 
is a
rather nontrivial
property studied under different names %(e.g. disconjugacy of linear ordinary differential equations) 
since the beginning of the last century. 
(There exists a vast literature on convexity and the classical 
achievements
are well
summarized in \cite {Co}. For more recent developments 
see e.g.
\cite {Ar}).}
\end{remark} 

Denote by $G_{{k,n}}$  the usual Grassmannian of real $k$-dimensional 
linear subspaces in $\bR^n$ (or equivalently, of real $(k-1)$-dimensional projective subspaces in
$\bP^{n-1}$).

\begin{definition}
{Given an $(n-k)$-dimensional linear subspace $L\subset \bR^n$, we define 
{\it the Grassmann hyperplane $H_{L}\subset G_{k,n}$ associated to 
$L$}  as the
set of all
$k$-dimensional linear subspaces in $\bR^n$ non-transversal to $L$.}
\end{definition}

\begin{remark}{\rm 
  The concept of Grassmann hyperplanes is well-known  in Schubert calculus, (see e.g.
\cite {Ca} and  \cite{ShSh}.) 
%(Grassmann hyperplanes and their
%analogs in the spaces of (in)complete flags were called {\it trains} 
%in a
%number of previous papers by the author.) 
More exactly,  $H_L\subset G_{k,n}$ coincides with the union of all Schubert cells of
positive codimension
 constructed using any complete flag containing $L$ as 
a linear 
subspace. The complement $G_{k,n}\setminus  H_L$ is the open Schubert cell isomorphic to the standard affine 
chart
in $G_{k,n}$. 
%If $n-k\ge k$, then each $H_L$ considered in some other standard affine 
%chart is isomorphic to the $k(n-2k)$-dimensional cylinder over the set $Deg_k$ of all degenerate $k\times k$-matrices.
  By duality, $H_L\subset G_{k,n}$ is isomorphic to $H_{L^\prime}\subset
G_{n-k,n}$ where $L^\prime$ is a $k$-dimensional linear subspace in $\bR^n$. }
\end{remark}

\begin{remark} {\rm A usual hyperplane $H\subset \bP^{n-1}$ is a particular
case of a  Grassmann hyperplane if we interpret $H$ as the set of all 
points
non-transversal (i.e. belonging) to $H$. $H$ itself can be   considered as a point in
$(\bP^{n-1})^\star$.} 
\end{remark}

\begin{definition} {A smooth closed curve $\Ga: \Ss^1\to G_{k,n}$ is called 
{\it locally
Grassmann-convex} if the local multiplicity of the intersection of $\Ga$ 
with
any Grassmann
hyperplane $H_L\subset G_{k,n}$ at any of its intersection points does not exceed $k(n-k)=\dim G_{k,n}$, and
{\it globally Grassmann-convex} %or simply {\it Grassmann-convex} 
if  the above condition holds
for the sum of all local  multiplicities, see \cite{ShSh}.}
\end{definition}

Below we refer to globally Grassmann-convex curves as {\it Grassmann-convex}.
The notions are directly generalized to smooth non-closed curves,
i.e. $\Ga: I \to G_{k,n}$. 

\begin{definition}
{Given a locally convex curve $\ga:\Ss^1\to \bP^{n-1}$
and a positive integer $1\le k\le n-1$,  
we 
define its {\it  $k$th osculating Grassmann curve}
 $osc_k\ga: \Ss^1\to G_{k,n}$ as the curve formed by the 
$(k-1)$-dimensional projective 
subspaces osculating  the initial
$\ga$. }
\end{definition}

For any $k=1,\dots,n-1$, the curve $osc_k\ga$ is well-defined   
due to the local convexity of the curve $\ga$. 

\begin{CONJ}[Grassmann convexity conjecture]\label{conj:Gr} 
For any  convex  curve $\ga:\Ss^1\to
\bP^{n-1}$ (resp. $\ga: I \to
\bP^{n-1}$) and any $1\le k \le n-1$,  its  osculating curve
$osc_k\ga: \Ss^1\to G_{k,n}$  (resp. $osc_k\ga: I\to G_{k,n}$) is Grassmann-convex. 
\end{CONJ}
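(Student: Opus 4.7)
The plan is to first reduce Conjecture \ref{conj:Gr} to Conjecture \ref{conj:Wr} (an equivalence asserted in the excerpt) and then to attack the latter through the flag-convex reformulation and the combinatorial rank function sketched in the introduction.

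For the reduction, given a locally convex $\ga: I \to \bP^{n-1}$, I would fix a smooth lift $\tilde\ga: I \to \RR^n \setminus \{0\}$ whose coordinates $y_1, \ldots, y_n$ form a fundamental system of solutions of a homogeneous linear ODE of order $n$; local convexity is equivalent to nowhere-vanishing of $\det W(y_1,\ldots,y_n)$, so after a standard normalization the equation takes the form \eqref{eq:disc}, and global convexity of $\ga$ is precisely disconjugacy on $I$. Writing an $(n-k)$-subspace $L$ in a chosen basis, one checks that $osc_k\ga(t) \in H_L$ iff a certain $k\times k$ minor evaluates to zero at $t$, and a linear-algebraic manipulation identifies that minor with a Wronskian $\det W(z_1,\ldots,z_k)$ for $k$ linear combinations $z_j$ of the $y_i$. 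The identification preserves multiplicities, so the bound $k(n-k)$ in Conjecture \ref{conj:Gr} is equivalent to the Wronskian bound of Conjecture \ref{conj:Wr}.

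Next, I would replace the basis $(y_1, \ldots, y_n)$ by the induced flag-convex curve $\Gamma: I \to \Lo_n^1$, so that the target Wronskian becomes, up to a nowhere-vanishing factor, the southwest minor $m_k(t) = \det(\swminor(\Gamma(t),k))$ of \eqref{equation:mk}. For $k=2$, I would then run the argument outlined in the excerpt: attach an admissible cyclic word in $\fW$ to a generic point of $\Lo_n^1$; verify that flag-convex motion of $\Gamma$ corresponds to admissible moves on the associated word; and introduce the integer rank function on $\fW$ that equals $0$ on the totally positive word and $2(n-2)$ on the totally negative word, is non-increasing along admissible moves, and strictly decreases at every zero of $m_2$. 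This immediately yields at most $2(n-2) = k(n-k)$ zeros of $m_2$. The case $k=n-2$ then follows by antidiagonal duality as in Remark \ref{remark:star}.

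The main obstacle is to handle intermediate values $3 \le k \le n-3$. The cyclic-word formalism appears intrinsically two-dimensional, reflecting the geometry of $G_{2,n}$ (and dually $G_{n-2,n}$), and it is unclear how to decorate the words so that an analogous rank function tracks $m_k$ for general $k$ with the right monotonicity property. This is precisely the question posed by the authors at the end of the introduction, and overcoming it appears to require either a genuinely new combinatorial model for $\Lo_n^1$ adapted to higher Grassmannians, or a different analytic handle on $m_k$ replacing the one the rank function provides in the case $k=2$.
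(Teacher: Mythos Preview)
Your proposal is an honest and accurate assessment, but note that the statement you are asked to prove is labeled a \emph{conjecture} in the paper and is \emph{not} proved there in full generality. The paper establishes only the cases $k=2$ and $k=n-2$ (this is Theorem~\ref{th:main}), via exactly the mechanism you outline: the reduction to Conjecture~\ref{conj:Wr}, the flag-convex reformulation through $m_k$, the cyclic-word encoding of generic points of $\Lo_n^1$, and the rank function on $\fW$ whose monotonicity under admissible moves bounds the zeros of $m_2$ by $2(n-2)$; the case $k=n-2$ is then handled by the duality of Remark~\ref{remark:star}. So on the part of the conjecture that the paper actually settles, your approach coincides with the paper's.

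The gap you identify for $3\le k\le n-3$ is genuine and is precisely the gap the paper leaves open: the authors themselves pose, at the end of the introduction, the question of whether the cyclic-word/rank approach can be extended beyond $G_{2,n}$. In other words, your proposal does not prove Conjecture~\ref{conj:Gr}, but neither does the paper; what you have written is a correct summary of the paper's method and its limitations.
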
 

The equivalence of Conjectures~\ref{conj:Wr} and ~\ref{conj:Gr} is straightforward and, in particular,   is explained in \cite{ShSh}.

To explain this equivalence we need the following. A curve $\gamma=(\gamma_1,\dots,\gamma_n):I\to \bR^n$ is called 
\emph{non-degenerate} if at every point $t\in I$ 
its osculating frame
$\{\gamma(t),\gamma'(t),\gamma''(t),\dots,\gamma^{n-1}(t)\}$
is non-degenerate.
This is equivalent to the fact that   its Wronski matrix $W(t)=W(\gamma_1(t),\dots,\gamma_n(t))$ has full rank. 
 
 Non-degenerate curves can be trivially identified with fundamental solutions of linear differential equations \eqref{eq:disc}. In particular,  we call a non-degenerate $\gamma$ \emph{disconjugate} if the corresponding equation \eqref{eq:disc} is disconjugate. On the other hand, it is obvious that  $\gamma$ is non-degenerate/disconjugate if and only if its projectivization  is locally convex/convex.
 
 Moreover, given a non-degenerate curve $\gamma=(\gamma_1,\dots,\gamma_n):I\to \bR^n$ and an integer $1\le k<n$, the zeros of the Wronskian $W(\ga_1,\dots, \ga_k)$ can be interpreted as the moments when the $k$th osculating Grassmann curve $osc_k\ga: I \to G_{k,n}$ intersects an appropriate Grassmann hyperplane;  for more details on $k=2$, see Section~\ref{sec:proofs}.   
  Observe that Conjecture~\ref{conj:Gr} is trivially satisfied for $k=1$ and $k=n-1$. 

\medskip

  Notice additionally that Theorem~\ref{th:main} admits the following natural interpretation, compare loc. cit. 

\begin{definition}  {Given a generic curve $\ga:\Ss^1\to \bP^{n-1}$,  we define its 
{\it
standard discriminant} %or {\it tangent developable}
$D_\ga\subset \bP^{n-1}$ to be the hypersurface consisting of all 
 subspaces  of codimension 2 osculating  $\ga$. (Here `generic' means having a
non-degenerate osculating $(n-2)$-frame at every point.) }
\end {definition}

\begin{definition}
{By  the \emph {$\bR$-degree}  of a real closed
% projective
(algebraic or non-algebraic) hypersurface $\cH\subset \bR^n$ (resp. $\cH \subset \bR P^{n-1}$) without boundary we mean  the supremum of the cardinality of $\cH \cap L$  taken over all lines $L \subset \bR^n$ (resp. $L \subset \bR P^{n-1}$)  such that $L$ intersects $\cH$ transversally. (Observe that the $\bR$-degree of a hypersurface can be infinite. Discussions of this notion can be found in \cite{LaShSh}).} 
\end{definition}

\begin{corollary}\label{cor:degree} For any closed convex curve $\ga:\Ss^1\to \bP^{n-1}$, the $\bR$-degree of its discriminant $D_\ga$ equals $2n-4$. 
\end{corollary}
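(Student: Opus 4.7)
The plan is to translate the count of intersection points of a line $L$ with the hypersurface $D_\ga$ into a count of zeros of $m_{n-2}$ for an associated flag-convex curve, and then to invoke Theorems~\ref{th:main} and~\ref{th:La} in the case $k = n-2$ (for which $k(n-k) = 2n-4$).

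First I would set up the geometric dictionary. A line $L \subset \bP^{n-1}$ corresponds to a $2$-plane $V \subset \bR^n$. A point $p \in L \cap D_\ga$ must lie on some osculating codimension-$2$ subspace $O_t$ of $\ga$, so $p$ equals the projectivization of $V \cap \tilde O_t$, with $\tilde O_t \subset \bR^n$ the $(n-2)$-dimensional lift of $O_t$. For a transversal line $L$, each such $p$ arises from a unique parameter $t$, and the existence of a nonzero $V \cap \tilde O_t$ is precisely the condition $osc_{n-2}\ga(t) \in H_V$. By the equivalence between Conjectures~\ref{conj:Wr} and~\ref{conj:Gr} recalled in \S\ref{sec:grassmann}, these parameters $t$ are exactly the zeros of the function $m_{n-2}$ attached to $\ga$ with a reference basis whose first two vectors span $V$.

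For the upper bound, fix any line $L$ transversal to $D_\ga$ and pick $t_\infty \in \Ss^1$ outside the (finite) set of intersection parameters. On $I = \Ss^1 \setminus \{t_\infty\}$ global convexity makes $\ga$ disconjugate, so Theorem~\ref{th:main} (case $k = n-2$) yields that $m_{n-2}$ has at most $2n-4$ zeros counting multiplicities, whence $|L \cap D_\ga| \le 2n-4$.

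For the lower bound, I would pick a non-degenerate open subarc $I_1 \subset \Ss^1$ and apply Theorem~\ref{th:La} with $k = n-2$ to the flag-convex curve $\Gamma_\bullet$ associated to $\ga$ on a slightly larger interval. The matrix $L_1 \in \Lo_n^1$ produced by the theorem amounts to a choice of $2$-plane $V$, and the conclusion provides exactly $2n-4$ simple zeros of $m_{\Gamma_1, n-2}$ in $I_1$; taking $L = \mathbb{P}(V)$ then yields $2n-4$ distinct points in $L \cap D_\ga$. The step I expect to be the main obstacle is to verify that simplicity of each zero of $m_{n-2}$, which a priori only records transversal crossing of $osc_{n-2}\ga$ with the Schubert hypersurface $H_V \subset G_{n-2,n}$, actually implies transversality of $L$ and $D_\ga$ at the corresponding point of $\bP^{n-1}$. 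This should be most cleanly handled via the incidence variety $\{(t,p) : p \in O_t\}$, which realises $D_\ga$ as the envelope of the family $\{O_t\}$ and equates the two transversality conditions for generic $V$; if necessary, a further small perturbation of $L_1$ within $\Lo_n^1$ guarantees genericity. Combining both bounds then gives the $\bR$-degree of $D_\ga$ equal to $2n-4$.
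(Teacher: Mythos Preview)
The paper does not give a proof of this corollary at all: it is stated immediately after the definitions of $D_\ga$ and $\bR$-degree as a ``natural interpretation'' of Theorem~\ref{th:main}, and the text then moves on. So there is no paper-proof to compare against; your argument supplies precisely the details the authors suppress.

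Your dictionary is correct: points of $L\cap D_\ga$ correspond to parameters $t$ with $osc_{n-2}\ga(t)\in H_V$, i.e.\ to zeros of $m_{n-2}$ for a basis whose $2$-plane is $V$. The upper bound via Theorem~\ref{th:main} is clean (note that distinct parameters might in principle yield the same point, which only helps). For the lower bound, your use of Theorem~\ref{th:La} is the natural move and matches the spirit of the paper. The concern you flag is genuine but mild: besides transversality of $L$ with $D_\ga$, you should also check that the $2n-4$ parameters produce $2n-4$ \emph{distinct} points of $L$ (two different $t$'s could a priori give the same $p\in L$). Both issues are generic-position conditions on $V$ and are handled exactly as you suggest, by a small perturbation of $L_1$ within $\Lo_n^1$ (equivalently of $V$ within $G_{2,n}$): the $2n-4$ simple zeros persist under perturbation, while coincidence of intersection points and non-transversal tangencies with the envelope $D_\ga$ are codimension-$\ge 1$ conditions on $V$. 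Your incidence-variety formulation is the right way to see this.
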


\noindent
{\it Basic notions of the multiplicative Sturm separation theory.}
\smallskip
 Following  \cite {ShSh}, let us now  recall the set-up of this theory,  an early version of which can be found in \cite {Sh}.   
 
 \smallskip
 Denote by $\Fl_n$ the space of complete real flags in $\bR^n$. We say that two complete flags $f_1, f_2 \in \Fl_n$   are {\it transversal}  if, for any $1\le i\le n-1$, the intersection of the $i$-dimensional subspace of $f_1$ with the $(n-i)$-dimensional subspace of $f_2$ coincides with the origin. Otherwise the flags $f_1, f_2 \in \Fl_n$   are called {\it non-transversal}. 

\begin{definition}
{ 
Given a locally convex curve $\ga:\Ss^1\to \bP^{n-1}$,   
define its {\it  osculating flag curve}
 $\gamma_{\Ff} : \Ss^1\to \Fl_n$ to be the curve formed by the 
complete flags osculating  $\ga$, see e.g. \cite{Sh}. (The curve $\gamma_{\Ff}$ is well-defined   
due to the local convexity of $\ga$; similar notion obviously exists for non-closed locally convex curves).}
\end{definition}

 For a non-degenerate curve $\Gamma :I\to\RR^n$ (or, equivalently, for its projectivization $\gamma: I \to \bP^{n-1}$) and any fixed flag $f\in \Fl_n$,  denote by $\sharp_{\ga, f}$   the number of {\it moments of non-transversality}   between $\gamma_{\Ff}$ and $f$, where  $t\in I$ is called a moment of non-transversality if the complete flags $\gamma_{\Ff}(t)$ and $f$ are non-transversal.  Define $\sharp_\ga=\sup_{f\in \Fl_n} \sharp_{\ga, f}$. 

\medskip
           The following two lemmas provide criteria for (non-)disconjugacy of linear ordinary differential equation or, equivalently, (non-)convexity of projective curves, compare \cite {Lev}.

%example 2
\begin{LEM}[see \cite {Sh}]
\label{lem:disconj}
A locally convex curve $\ga: \Ss^1\to \bP^{n-1}$  (resp. $\ga: I\to \bP^{n-1}$)
is {\it globally convex} if and only if,
for all $t_1\neq t_2 \in \Ss^1$ (resp. $t_1\neq t_2 \in I$),
the flags $\gamma_{\Ff}(t_1)$ and $\gamma_{\Ff}(t_2)$ are transversal. 
\end{LEM}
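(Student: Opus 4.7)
The plan is to translate both transversality of osculating flags and the global convexity of $\gamma$ into the same geometric language: contact orders of hyperplanes with $\gamma$. Two osculating flags $\gamma_{\Ff}(t_1)$ and $\gamma_{\Ff}(t_2)$ fail to be transversal precisely when $V_k(t_1)\cap V_{n-k}(t_2)\ne\{0\}$ for some $k\in\{1,\dots,n-1\}$; since the dimensions of these two subspaces sum to $n$, this is equivalent to $V_k(t_1)+V_{n-k}(t_2)$ having dimension at most $n-1$, i.e.\ to its being contained in some hyperplane $H\subset\bR^n$. Now $V_j(t)\subset H$ says exactly that the corresponding projective hyperplane has contact of order at least $j$ with $\gamma$ at $t$, so the reformulation reads: $\gamma_{\Ff}(t_1)$ and $\gamma_{\Ff}(t_2)$ are non-transversal if and only if some hyperplane meets $\gamma$ with total multiplicity at least $n$ concentrated on $\{t_1,t_2\}$.

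The forward implication is then immediate. If $\gamma$ is globally convex, no hyperplane attains total contact $\ge n$ on $I$, and in particular not on any two-point subset, so every pair of osculating flags is transversal.

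For the converse I would argue by contradiction. Assume every pair of osculating flags is transversal but $\gamma$ is not globally convex, and choose a hyperplane $H^{*}$ with total contact $\ge n$ whose support $\{t_1<\cdots<t_r\}$ has minimal cardinality, with multiplicities $m_1,\dots,m_r$, $\sum m_i\ge n$. Local convexity ($m_i\le n-1$) excludes $r=1$, and $r=2$ produces a non-transversal pair by the reformulation, contradicting the hypothesis. What remains is to reduce the case $r\ge 3$ to smaller $r$ by a coalescence procedure. I would fix the contact conditions at $t_1,\dots,t_{r-2}$ and consider, inside the projective space of hyperplanes of $\bR^n$, the determinantal locus $Z$ of triples $(H,s,s')$ with additional contact of order $\ge m_{r-1}$ at a variable point $s$ and $\ge m_r$ at a variable point $s'$. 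This $Z$ contains $(H^{*},t_{r-1},t_r)$, and a routine dimension count shows it is positive-dimensional there; following a path inside $Z$, I would drive $(s,s')$ to a boundary configuration where either $s$ and $s'$ coalesce --- yielding a hyperplane with contact $\ge m_{r-1}+m_r$ at the merged point by upper semi-continuity of intersection multiplicity --- or one of them meets some $t_i$ with $i\le r-2$. In either case the new hyperplane has total contact $\ge n$ on strictly fewer than $r$ points, contradicting minimality.

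The main obstacle will be verifying that the coalescence path in $Z$ genuinely reaches the desired boundary rather than running off into the interior of the parameter region. This step is essentially the content of the classical Polya criterion characterising disconjugacy by the unique solvability of every two-point boundary-value problem of total order $n$. A rigorous treatment would either compactify and use a topological degree argument (transparent on $\Ss^{1}$), or work on the interval by tracking the analytic continuation of the determinantal equation defining $Z$ and controlling escape by Rolle-type bounds on the zeros of successive derivatives of $\ell\circ\gamma$, where $\ell$ is a functional cutting out $H$. Once this coalescence step is in hand the remainder of the argument is purely formal.
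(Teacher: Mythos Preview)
The paper does not give its own proof of this lemma: it is one of the lettered statements explicitly borrowed from the literature (here from \cite{Sh}), so there is nothing in the paper to compare your argument against.

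On the merits of your proposal itself: your reformulation of flag non-transversality as the existence of a hyperplane with total contact $\ge n$ concentrated at two points is correct, and the forward implication is indeed immediate. For the converse, the reduction via a minimal-$r$ counterexample is the natural strategy, and the cases $r=1,2$ are handled correctly. The substantive content is entirely in the step $r\ge 3$, and you are right that this is exactly the Polya criterion (disconjugacy is equivalent to unique solvability of every two-point Hermite interpolation problem of total order $n$). However, you have not actually carried out this step: the dimension count tells you $Z$ is a curve through your point, but nothing you have written rules out $Z$ being a closed loop in the interior, or (on an open interval) escaping to the boundary of $I\times I$ without coalescence. Invoking ``topological degree'' or ``Rolle-type bounds'' at this level of vagueness is not a proof.

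A standard way to close the gap (see Coppel \cite{Co} or Levin \cite{Lev}) avoids the deformation in the space of hyperplanes and instead works directly with the determinants $D_k(t_1,t_2)=\det\bigl(\gamma(t_1),\dots,\gamma^{(k-1)}(t_1),\gamma(t_2),\dots,\gamma^{(n-k-1)}(t_2)\bigr)$: one shows by a Rolle-type induction on $n$ (or on the order of the derived equations) that nonvanishing of all $D_k$ for $t_1\ne t_2$ forces every $\ell\circ\gamma$ to have at most $n-1$ zeros. If you want to make your coalescence argument rigorous instead, you would need to parametrize the curve $Z$ explicitly via the one remaining linear condition and track the sign of the corresponding determinant as $(s,s')$ moves; this can be done but is more work than the sketch suggests.
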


\begin{LEM}[see \cite {Sh}]
\label{lem:conj}
A locally convex curve $\ga:\Ss^1\to \bP^{n-1}$ (resp. $\ga: I\to \bP^{n-1}$)
is {\it not globally convex}  if and only if,
for any complete flag $f\in \Fl_n$,
there exists $t\in \Ss^1$ (resp. $t\in I$) such that
$f$ and $\gamma_{\Ff}(t)$ are non-transversal.
\end{LEM}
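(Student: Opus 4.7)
The plan is to prove the lemma via its contrapositive: a locally convex curve $\gamma$ is globally convex if and only if some flag $f \in \Fl_n$ is transversal to every osculating flag $\gamma_{\Ff}(t)$. Lemma \ref{lem:disconj} gives the pairwise transversality characterization of global convexity and will be the basic tool; the real work is in relating pairwise transversality of the osculating flags to the existence of a single universal transversal flag.

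For the implication ``$f$ universally transversal $\Rightarrow$ globally convex,'' I would use $f$ to lift the osculating flag curve into the big Bruhat cell of flags transversal to $f$, which is naturally identified with the affine chart $\Lo_n^1$. Local convexity of $\gamma$ makes this lift $\Gamma: \Ss^1 \to \Lo_n^1$ a flag-convex curve in the sense of Section 1, i.e., $(\Gamma(t))^{-1}\Gamma'(t) \in \cJ$. Pairwise transversality of $\gamma_{\Ff}(t_1),\gamma_{\Ff}(t_2)$ is then equivalent to the product $\Gamma(t_1)^{-1}\Gamma(t_2)$ lying in a specific open Bruhat stratum. Using the Bruhat monotonicity along flag-convex curves developed in \S\ref{sec:proofs}, one then checks that this condition holds for every pair $t_1 \ne t_2$, and Lemma \ref{lem:disconj} concludes global convexity.

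For the converse ``globally convex $\Rightarrow$ universal transversal flag exists,'' the plan is to exhibit such a flag explicitly. The natural candidate is the \emph{anti-osculating flag} at any chosen base point $t_0$, whose $k$-th subspace is spanned by $\gamma^{(n-1)}(t_0),\ldots,\gamma^{(n-k)}(t_0)$. At $t = t_0$ transversality to $\gamma_{\Ff}(t_0)$ is immediate from local convexity since the two frames together span $\RR^n$; for $t \ne t_0$ it should follow from Lemma \ref{lem:disconj} by a limiting argument that compares the anti-osculating frame at $t_0$ to the osculating frame at $t_0$ and transfers transversality via the osculating flag curve. An equivalent viewpoint is to pass to the polar dual convex curve $\gamma^*$, whose osculating flag at $t_0$ provides the desired $f$ by the duality pairing discussed in \S\ref{sec:grassmann}.

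I expect the main obstacle to be precisely this converse direction: showing that transversality between osculating flags at distinct times (the hypothesis provided by Lemma \ref{lem:disconj}) propagates to transversality between $\gamma_{\Ff}(t)$ and the anti-osculating flag at a \emph{fixed} point $t_0$, which is not itself an osculating flag. The plan relies either on (i) a limit/continuity argument that deforms the osculating flag at $t_0$ into its anti-osculating counterpart while preserving transversality, or on (ii) the polar duality between convex curves and their duals, with transversality translated across the pairing. Either route depends on the standard dictionary between osculating flags and Bruhat cells; once this dictionary is set up, the lemma follows from the rigidity of flag-convex lifts into $\Lo_n^1$ and Lemma \ref{lem:disconj}.
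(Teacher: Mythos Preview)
The paper does not prove this lemma: it is one of the lettered statements explicitly flagged in the introduction as ``borrowed from the existing literature,'' and is cited from \cite{Sh} without proof. There is therefore no proof in the paper to compare your proposal against.

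As for your plan itself: the forward direction (existence of a universally transversal flag $\Rightarrow$ global convexity) is essentially right and is in fact how the paper \emph{uses} this circle of ideas at the start of \S\ref{sec:proofs}: once the whole flag curve sits in a single big cell $\cong \Lo_n^1$, Lemma~\ref{lem:ShGoSa0} applied to $t\mapsto \Gamma(t_1)^{-1}\Gamma(t)$ gives $\Gamma(t_1)^{-1}\Gamma(t_2)\in\Pos\cup\Neg$ for $t_1\ne t_2$, hence pairwise transversality, hence global convexity by Lemma~\ref{lem:disconj}.

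For the converse you are working harder than necessary, and your own diagnosis of the obstacle is accurate: neither the limiting argument nor the duality argument is made precise, and the anti-osculating flag at $t_0$ is \emph{not} transversal to $\gamma_{\Ff}(t_0)$ in any obvious way without further work. A cleaner route, and the one implicit in the first paragraph of \S\ref{sec:proofs}, is simply to take $f=\gamma_{\Ff}(\tau)$ for a suitable $\tau$. For an interval $I$ one extends $\gamma$ convexly a bit past an endpoint (as in Corollary~\ref{cor:extension}) and takes $\tau$ just outside $I$; then Lemma~\ref{lem:disconj} on the extended curve makes $\gamma_{\Ff}(\tau)$ transversal to every $\gamma_{\Ff}(t)$, $t\in I$. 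The $\Ss^1$ case needs an additional observation (one cannot step ``outside'' the circle), which is handled in \cite{Sh}; your proposal does not address this distinction.
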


% (Recall that ``convex" means the same as ``globally convex".)
The next claim appears to be new,
but closely related to Conjecture \ref{conj:Wr}.

\begin{conjecture}
\label{conj:sturm}
For any convex curve $\ga: \Ss^1\to \bP^{n-1}$ (resp. $\ga: I\to \bP^{n-1}$),
one has
$$\sharp_\ga=\frac{n^3-n}{6}.$$
\end{conjecture}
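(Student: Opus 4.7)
The plan is to deduce Conjecture~\ref{conj:sturm} from the dictionary between moments of non-transversality and zeros of the functions $m_k$ developed in \S\ref{sec:proofs}--\ref{sec:La}, using Conjecture~\ref{conj:Wr} for the upper bound and Theorem~\ref{th:La} for the lower bound. Once both ingredients are in place the conjecture follows almost immediately; in particular, it becomes unconditional for $n \le 5$, since Conjecture~\ref{conj:Wr} is then known in all cases by Theorem~\ref{th:main} together with the trivial $k \in \{1,n-1\}$.

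\emph{Translation.} First I would lift the osculating flag curve $\gamma_{\Ff}: I \to \Fl_n$ to a flag-convex curve $\Gamma_\bullet: I \to \Lo_n^1$ in the manner outlined for Theorem~\ref{th:La}. A competing flag $f \in \Fl_n$, transversal to the chosen reference flag, is encoded by some $L_f \in \Lo_n^1$, and then $\Gamma_f(t) := L_f\,\Gamma_\bullet(t)$ is again flag-convex. By construction the flags $\gamma_{\Ff}(t)$ and $f$ are non-transversal precisely when $\Gamma_f(t)$ leaves the big Bruhat cell, equivalently when at least one $m_k(t) = \det(\swminor(\Gamma_f(t),k))$ vanishes. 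The multiplicity statement recalled from \cite{GoSa0} (the one asserting that if $\Gamma_f(t_0) \in \Bru_\sigma$ then $m_k$ has a zero of order $\mult_k(\sigma)$ at $t_0$) identifies the order of non-transversality along $m_k$ combinatorially, but for the argument that follows one only needs the equivalence between zero loci.

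\emph{Upper and lower bounds.} Since a moment $t$ is one of non-transversality iff some $m_k$ vanishes at $t$,
$$\sharp_{\ga,f} \;\le\; \sum_{k=1}^{n-1} \#\bigl\{t \in I : m_k(t)=0\bigr\} \;\le\; \sum_{k=1}^{n-1}\bigl(\text{zeros of $m_k$ counted with multiplicity}\bigr).$$
Granting Conjecture~\ref{conj:Wr}, each summand is bounded by $k(n-k)$, and summing gives $\sharp_{\ga,f} \le \sum_{k=1}^{n-1} k(n-k) = \frac{n^3-n}{6}$, hence $\sharp_\ga \le \frac{n^3-n}{6}$. For the matching lower bound I would apply Theorem~\ref{th:La} (with any $I_1 \subset I$) to produce $L_1 \in \Lo_n^1$ such that every $m_k$ attached to $\Gamma_1 = L_1\Gamma_\bullet$ has exactly $k(n-k)$ simple zeros in $I$ and the zero sets for different $k$ are pairwise disjoint. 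Letting $f$ be the flag corresponding to $L_1$, the moments of non-transversality are the disjoint union of these zero sets, of total cardinality $\sum_{k=1}^{n-1} k(n-k) = \frac{n^3-n}{6}$, so $\sharp_\ga \ge \sharp_{\ga,f} = \frac{n^3-n}{6}$. Combining the two bounds yields the conjecture.

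\emph{Main obstacle.} The difficulty is entirely localized in the upper bound: the deduction above is essentially bookkeeping, but it is conditional on Conjecture~\ref{conj:Wr} in its full strength. Removing the conditional for general $n$ would require extending the admissible cyclic word / rank-function machinery of \S\ref{sec:proofs} beyond $k \in \{2,n-2\}$, i.e.\ settling the Grassmann convexity conjecture on the higher Grassmannians $G_{k,n}$ --- which is precisely the \emph{tantalizing question} raised at the end of the introduction. Short of that, the best one obtains unconditionally via this strategy is Conjecture~\ref{conj:sturm} for $n \le 5$, together with the inequality $\sharp_\ga \ge \frac{n^3-n}{6}$ in every dimension.
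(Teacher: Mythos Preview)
Your proposal is correct and matches the paper's own treatment: the paper does not prove Conjecture~\ref{conj:sturm} in general either, but obtains the lower bound exactly as you do (Corollary~\ref{coro:ineq} from Theorem~\ref{th:La}) and the upper bound only conditionally on Conjecture~\ref{conj:Wr}, yielding an unconditional result precisely for $n\le 5$ (Corollary~\ref{th:4-5}) via Theorem~\ref{th:main} and the trivial cases $k\in\{1,n-1\}$. Your identification of the obstacle coincides with the paper's closing question in the introduction.
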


\smallskip
Conjecture~\ref{conj:sturm} is obvious for $n=2$ and easy for $n=3$;
we shall soon see some further support for it. 
The following result follows directly from Theorem \ref{th:La}.

\begin{corollary}\label{coro:ineq}
For any convex curve $\ga: \Ss^1\to \bP^{n-1}$ or
$\ga: I\to \bP^{n-1}$ we have
$$\sharp_\ga\ge \frac{n^3-n}{6}.$$
\end{corollary}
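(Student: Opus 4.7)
The plan is to translate Theorem~\ref{th:La} directly into the flag-theoretic language defining $\sharp_\gamma$. Given a convex curve $\gamma: I \to \bP^{n-1}$ (or $\gamma: \Ss^1 \to \bP^{n-1}$, in which case I first delete an arbitrary point of $\Ss^1$ and view the complement as an open interval), its osculating flag curve $\gamma_\Ff: I \to \Fl_n$ can be encoded by a flag-convex curve $\Gamma_\bullet: I \to \Lo_n^1$ as in the body of the paper: fix a reference flag $f_0$ and an opposite flag $f^\circ$, and write $\gamma_\Ff(t) = \Gamma_\bullet(t) \cdot f^\circ$, where $\Gamma_\bullet(t) \in \Lo_n^1$ is uniquely determined because $\gamma_\Ff(t)$ is transversal to $f_0$ for every $t$ (Lemma~\ref{lem:disconj}). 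Local convexity of $\gamma$ translates into flag-convexity of $\Gamma_\bullet$.

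Next I would invoke the dictionary built into $m_k$. For any $\Gamma \in \Lo_n^1$, the vanishing $m_k(\Gamma) = \det(\swminor(\Gamma,k)) = 0$ is equivalent to the $k$-dimensional span of the first $k$ columns of $\Gamma$ meeting $\spanl(e_1,\dots,e_{n-k})$ nontrivially, i.e.\ to the $k$-th subspace of the flag $\Gamma \cdot f^\circ$ being non-transversal to the $(n-k)$-th subspace of $f_0$. Consequently, for $\Gamma_1 = L_1 \Gamma_\bullet$ with any $L_1 \in \Lo_n^1$, the condition $m_{\Gamma_1,k}(t) = 0$ is equivalent to $\gamma_\Ff(t)$ being non-transversal at level $k$ to the fixed flag $f := L_1^{-1} \cdot f_0$.

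Now I would pick any nonempty open subinterval $I_1 \subset I$ and apply Theorem~\ref{th:La} to $\Gamma_\bullet$, producing $L_1 \in \Lo_n^1$ such that for $\Gamma_1 = L_1 \Gamma_\bullet$ each $m_{\Gamma_1,k}$ has exactly $k(n-k)$ simple roots in $I$ (all of them lying in $I_1$), and such that the root sets for different $k$ are disjoint. By the dictionary above, each of these roots is a distinct moment where $\gamma_\Ff$ is non-transversal to $f := L_1^{-1} \cdot f_0$, so
\[
\sharp_{\gamma, f} \;\ge\; \sum_{k=1}^{n-1} k(n-k) \;=\; \frac{n^3 - n}{6},
\]
and taking the supremum over $f \in \Fl_n$ yields the desired inequality. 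The only point requiring any attention is the dictionary relating $m_k(\Gamma_1(t)) = 0$ to non-transversality of $\gamma_\Ff(t)$ with $L_1^{-1} \cdot f_0$, but this is a direct unwinding of definitions and is precisely the identification already used in the paper to pass between Conjecture~\ref{conj:Wr} and statements about zeros of $m_k$; once this is in hand, the counting is immediate.
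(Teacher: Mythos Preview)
Your argument is correct and follows essentially the same route as the paper: pass from the convex curve $\gamma$ to a flag-convex $\Gamma_\bullet:I\to\Lo_n^1$, apply Theorem~\ref{th:La} to obtain $L_1$, and observe that $L_1$ determines a flag whose moments of non-transversality with $\gamma_\Ff$ are exactly the $\frac{n^3-n}{6}$ roots of the $m_k$. Your version spells out the dictionary (and the identification $f=L_1^{-1}\cdot f_0$) a bit more explicitly than the paper does, but the strategy is identical.
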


\begin{proof}
We may assume $I$ open and $\ga$ associated to $\Gamma: I \to \Lo_n^1$,
$\Gamma$ flag-convex
(this correspondence is discussed in the first paragraphs
of Section \ref{sec:proofs}).
Apply Theorem \ref{th:La} to obtain a matrix $L_1$
such that, for $L_1\Gamma$,
the functions $m_k$ have, among them, $\frac{n^3 - n}{6}$ roots.
The matrix $L_1$ defines a flag $f_1$:
the moments of non-transversality beween $\gamma$ and $f_1$
are precisely the roots of the functions $m_k$
for $L_1\Gamma$, as above. 
\end{proof}

\medskip
Combining Theorems~\ref{th:main} and Corollary~\ref{coro:ineq}
we get  the following.

\begin{corollary}
\label{th:4-5}
Conjecture~\ref{conj:sturm} holds for $n=4$ and $n=5$.
\end{corollary}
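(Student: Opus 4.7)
The plan is to combine the lower bound of Corollary~\ref{coro:ineq} with a matching upper bound deduced from Conjecture~\ref{conj:Wr} in the cases where it has just been established. Granting Corollary~\ref{coro:ineq}, it remains to show that $\sharp_\gamma \leq \frac{n^3-n}{6}$ for $n \in \{4,5\}$. Fix an arbitrary flag $f \in \Fl_n$. Using the dictionary recalled at the beginning of \S\ref{sec:proofs} between locally convex projective curves and flag-convex curves in $\Lo_n^1$, I would choose $L_f \in \Lo_n^1$ representing $f$ so that a point $t \in I$ is a moment of non-transversality between $\gamma_{\Ff}$ and $f$ precisely when at least one of the functions $m_k = m_{L_f \Gamma, k}$ vanishes at $t$, with the total multiplicity of the encounter equal to $\sum_{k=1}^{n-1}\mathrm{ord}_t\,m_k$. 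Hence $\sharp_{\gamma,f}$ equals the total number of zeros (counted with multiplicity) of the functions $m_k$, $1 \leq k \leq n-1$.

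For $n \in \{4,5\}$ and every $1 \leq k \leq n-1$, the bound in Conjecture~\ref{conj:Wr} is now in hand: the extreme cases $k=1$ and $k=n-1$ are the trivial ones noted right after the statement of Conjecture~\ref{conj:Wr}, while the intermediate indices $k=2$ and $k=n-2$ are covered by Theorem~\ref{th:main}. These four cases exhaust $\{1,2,3\}$ for $n=4$ and $\{1,2,3,4\}$ for $n=5$. Combining the resulting bounds $\#\{t : m_k(t)=0\} \leq k(n-k)$ yields
\[
\sharp_{\gamma,f} \;\le\; \sum_{k=1}^{n-1} k(n-k) \;=\; \frac{n(n-1)(n+1)}{6} \;=\; \frac{n^3-n}{6}.
\]
Taking the supremum over $f \in \Fl_n$ produces the desired upper bound, which together with Corollary~\ref{coro:ineq} proves the equality of Conjecture~\ref{conj:sturm}.

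The one nontrivial point is the identification in the first paragraph: that $\sharp_{\gamma,f}$ is exactly the total zero count of the $m_{L_f \Gamma,k}$, with correct multiplicities. This is precisely the equivalence between Conjecture~\ref{conj:Wr} and its flag-convex formulation discussed around equation~\eqref{equation:mk}, supplemented by the Bruhat-cell interpretation of multiplicities (Theorem~4 of \cite{GoSa0}) invoked in \S\ref{sec:La} to read $\mathrm{ord}_{t_0} m_k$ as $\mult_k(\sigma)$ when $\Gamma(t_0) \in \Bru_\sigma$. This is exactly the identification already used in the proof of Corollary~\ref{coro:ineq} and therefore requires no further work; the genuinely new content of the corollary is the matching upper bound supplied by Theorem~\ref{th:main}, which happens to exhaust all indices $k$ precisely for $n \leq 5$.
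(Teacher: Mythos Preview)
Your proof is correct and follows essentially the same route as the paper: bound the zeros of each $m_k$ by $k(n-k)$ using the trivial cases $k=1,n-1$ and Theorem~\ref{th:main} for $k=2,n-2$, observe that these exhaust all $k$ when $n\le 5$, sum to get $\sharp_{\gamma,f}\le\frac{n^3-n}{6}$, and combine with Corollary~\ref{coro:ineq}. One small point: the paper defines $\sharp_{\gamma,f}$ as the number of \emph{moments} $t$ of non-transversality, not the total multiplicity $\sum_k\operatorname{ord}_t m_k$, so your ``equals'' in the first paragraph should strictly be ``is at most''; this only helps the inequality you need, and the extra discussion of multiplicities via Theorem~4 of \cite{GoSa0} is then unnecessary for the upper bound.
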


\begin{proof}
We use the notation of Theorem \ref{th:La},
particularly the functions $m_k$.
% as in Equation \ref{equation:mk}.
We know that $m_1$ and $m_{n-1}$ have at most $n-1$ zeroes each.
From Theorem \ref{th:main}, we know that $m_2$ and $m_{n-2}$
have at most $2(n-2)$ zeroes each.
For $n \le 5$, this covers all possible values of $k$.
This implies that $\sharp_\ga \le \frac{n^3-n}{6}$
for any convex curve $\ga: \Ss^1\to \bP^{n-1}$.
Corollary \ref{coro:ineq} gives us the other inequality.
\end{proof}

% We know that the functions $m_1$
% Let $\Lo_n^1$ be the nilpotent Lie group of lower triangular $n\times n$
% real matrices whose diagonal entries equal $1$.
% Let $\cJ$ be the set of $n\times n$ real matrices $A$
% with positive entries $A_{j+1,j}$ and other entries equal to $0$.
% A smooth curve $\Gamma: [0,1] \to \Lo_n^1$ is {\em convex}
% if $(\Gamma(t))^{-1}\Gamma'(t) \in \cJ$ (for all $t \in [0,1]$).
% For a convex curve $\Gamma$ and an integer $k$, $0 < k < n$,
% let 
% \[ m_k(t) = \det(\swminor(\Gamma(t),k)). \]
% Here $\swminor(L,k)$ is the $k\times k$ submatrix of $L$
% formed by its last $k$ rows and its first $k$ columns. 
% Conjecture \ref{conj:sturm} states that
% the maximum value for the total number of zeroes $t \in [0,1]$
% of all functions $m_k$ is $\frac{n^3-n}{6}$
% (zeroes are counted with multiplicity).
% Conjecture \ref{conj:Gr} is essentially equivalent to saying
% that the number of zeroes of $m_k$ is at most $k(n-k)$.

% Theorem \ref{coro:ineq} shows that thare are examples of convex curves
% for which each $m_k$ has exactly $k(n-k)$ zeroes;
% they can furthermore be assumed to be all distinct.
% As we remarked above, Conjecture \ref{conj:Gr} is trivial for
% $k = 1$ and $k = n-1$.
% Theorem \ref{th:main} implies that Conjecture \ref{conj:Gr} is true for
% $k = 2$ and $k = n-2$.

% For $n = 4$ and $n = 5$ we have that Conjecture \ref{conj:Gr}
% holds for all possible values of $k$,
% implying Conjecture \ref{conj:sturm}.

To finish this section, let us mention that it is well-known that,
for general equations~\eqref{eq:disc} of order exceeding $2$,
the number and location of the zeros of their different individual solutions
can be quite arbitrary.  
On the other hand,  for any equation~\eqref{eq:disc},
one  can split its time interval $I$ into maximal disjoint subintervals
on each of which \eqref{eq:disc} is disconjugate. 
In order to get a meaningful comparison theory, 
instead of looking at the individual solutions
we could compare different fundamental solutions of ~\eqref{eq:disc},
i.e. count the number of moments of non-transverality of the flag curve of ~\eqref{eq:disc} with  different complete flags.
This approach leads to the following claim which is a  new type of generalization  of the classical Sturm separation theorem from the case of  linear ode of order $2$ to the case of arbitrary order, comp. \cite{Sh}.

            \begin{CONJ}[see \cite{ShSh}]\label{conj:SturmN}  For $n\ge 2$, let $\ga: \Ss^1\to \bP^{n-1}$ (resp. $\ga: I\to \bP^{n-1}$) be a locally, but not globally convex curve.
            %Assume without loss of generality that $\#_{\Ff}\leq \widehat{\#_{\Ff}}$.
            Then, for any pair of complete flags $f_1$ and $f_2$, 
            \begin{equation}\label{eq:Sturm}
                \sharp_{\ga, f_1}\leq \dfrac{n^3-n+6}{6}\cdot \sharp_{\ga,  f_2}.
 %               \frac{6}{n^3-n+6}\leq\frac{\#}{\widehat{\#}}\leq\frac{n^3-n+6}{6},
            \end{equation}
%where $D=\dfrac{n^3-n}{6}$.
\end{CONJ}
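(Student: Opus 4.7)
The plan is to slice the domain $\Ss^1$ (or $I$) at the moments of non-transversality with $f_2$, argue convexity on each resulting open sub-arc, and then apply Conjecture~\ref{conj:sturm} piecewise. Since $\ga$ is locally but not globally convex, Lemma~\ref{lem:conj} yields $m := \sharp_{\ga, f_2} \geq 1$; let $T_2 = \{t_1, \ldots, t_m\}$ be the distinct moments of non-transversality of $\gamma_{\Ff}$ with $f_2$, and let $J_1, \ldots, J_m$ denote the open components of $\Ss^1 \setminus T_2$. Applying the contrapositive of Lemma~\ref{lem:conj} to $\ga|_{J_j}: J_j \to \bP^{n-1}$, with the witnessing flag $f_2$ (which is transversal to $\gamma_{\Ff}(t)$ for every $t \in J_j$ by construction), each restriction $\ga|_{J_j}$ is globally convex.

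With convexity on each arc in hand, Conjecture~\ref{conj:sturm} applied to $\ga|_{J_j}$ gives
\[
\sharp_{\gamma|_{J_j}, f_1} \leq \frac{n^3-n}{6}
\]
for every $j$. A non-transversality moment of $\gamma_{\Ff}$ with $f_1$ is either an interior point of some $J_j$ or a point of $T_2$, the latter contributing at most $m$ in the distinct-moments convention. Summing,
\[
\sharp_{\ga, f_1} \leq m \cdot \frac{n^3-n}{6} + m = \sharp_{\ga, f_2} \cdot \frac{n^3-n+6}{6},
\]
as required.

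Two obstacles deserve mention. The principal one is that the argument rests on Conjecture~\ref{conj:sturm} (equivalently Conjecture~\ref{conj:Wr}), which is currently established only for $n \leq 5$ by Corollary~\ref{th:4-5}; consequently Conjecture~\ref{conj:SturmN} becomes unconditional only in those orders. Secondarily, in the non-closed case $\ga: I \to \bP^{n-1}$ with $I$ an open interval, excising $m$ interior points produces $m+1$ open sub-arcs rather than $m$, costing an extra $\frac{n^3-n}{6}$ in the naive sum; closing this gap calls for a combining argument for the two boundary arcs $(a, t_1)$ and $(t_m, b)$, a compactification of $I$ into $\Ss^1$ that preserves $f_2$-non-transversality, or a sharper use of the failure of global convexity on $I$. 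If additionally $\sharp$ is interpreted with multiplicities, the boundary contribution $\sum_j \mu_1(t_j)$ at the $t_j \in T_2$ must be controlled, for which one might invoke the local multiplicity bounds afforded by the multiplicity vector $\mult(\sigma)$ discussed in \S\ref{sec:La}, or a perturbation argument placing $f_1$ in general position relative to $T_2$.
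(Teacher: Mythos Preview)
Your approach matches the paper's exactly: the paper does not prove Conjecture~\ref{conj:SturmN} but merely remarks that ``(if settled) Conjecture~\ref{conj:sturm} combined with Lemma~\ref{lem:conj} will imply Conjecture~\ref{conj:SturmN}.'' Your write-up is precisely the argument behind that one-line remark --- cut at the $f_2$-nontransversality moments, use the contrapositive of Lemma~\ref{lem:conj} to get convexity on each open arc, and apply Conjecture~\ref{conj:sturm} arc-by-arc --- and you correctly flag that the result is therefore conditional (unconditional only for $n\le 5$ via Corollary~\ref{th:4-5}).

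Your second caveat is also well-placed: the paper does not address the $m$ versus $m+1$ count in the open-interval case, and the naive sum there overshoots by $\frac{n^3-n}{6}$. This is a genuine loose end in the sketch (both yours and the paper's), not an error on your part; the statement is, after all, presented as a conjecture.
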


Observe that (if settled)  Conjecture~\ref{conj:sturm} combined with Lemma~\ref{lem:conj} will imply Conjecture~\ref{conj:SturmN}.

\medskip
\noindent

\section{Proof of Theorem \ref{th:main}} \label{sec:proofs}

%In this and the next sections we will extensively use matrices and matrix interpretations of several notions and claims from the Introduction.  

%{\color{blue}

In this section we follow the notation of \cite{GoSaX, GoSa0}
and use  matrix realizations of flag curves.
% obtained as osculating curves of convex projective curves.
(Such realizations were already used in the earlier papers by the authors). 

Observe that we can assume that for any convex curve $\ga: \Ss^1\to \bP^{n-1}$ (or  $\ga: I\to \bP^{n-1}$), its osculating flag curve $\ga_{\Ff}: \Ss^1\to \Fl_n$ (resp. $\ga_{\Ff}: I\to \Fl_n$) lies completely in some top-dimensional  Schubert cell in $\Fl_n$.
To see that, depending on whether one considers the case of $\Ss^1$ or $I$, let us either fix an arbitrary point $\tau\in \Ss^1$ or the left endpoint $\tau \in I$. 
Take the flag  $\ga_{\Ff}(\tau)\in \Fl_n$ as the complete flag defining the top-dimensional Schubert cell in $\Fl_n$. (In other words, we take all complete flags transversal to $\ga_{\Ff}(\tau)$.) 
By Lemma~\ref{lem:disconj}, for any $\nu\in \Ss^1$ (resp. $\nu\in I$) different from $\tau$, the flags $\ga_{\Ff}(\tau)$ and $\ga_{\Ff}(\nu)$ are transversal, which means that the latter flag lies in the top-dimensional Schubert cell in $\Fl_n$ with respect to the former flag. Thus the whole flag curve $\ga_{\Ff}$, except for one point $\ga_{\Ff}(\tau)$, lies in this top-dimensional cell.
% (In case of the interval $I$, one can in fact, put the whole flag curve completely in the appropriate top-dimensional cell). 

Top-dimensional cells in $\Fl_n$ are standardly identified with $\Lo_n^1$, where  $\Lo_n^1$ is the nilpotent Lie group of real lower triangular
$n\times n$ matrices with all diagonal entries equal to $1$.
This group can be interpreted as the tangent space to $\Fl_n$
at any fixed chosen flag $f$. 
Alternatively, the usual $LU$ and $QR$ decompositions
define diffeomorphisms $\bQ: \Lo_n^1 \to \cU_1$ and
$\bL: \cU_1 \to \Lo_n^1$, where $\cU_1 \subset \Fl_n$
is a top dimensional cell (see \cite{GoSa0}).

Recall that the $LU$ decomposition of an invertible matrix $A \in \GL_n$
is a pair of matrices $L$ and $U$ such that $A = LU$,
$L \in \Lo_n^1$ is lower triangular with diagonal entries equal to $1$
and $U \in \Up_n$ is upper triangular (and invertible).
There exists a neighborhood
% $\cU_1 \subset \GL_n$
of the identity matrix where such a decomposition
is smoothly and uniquely defined.
The set $\cU_1 \subset \SO_{n}$ is the intersection
of this open neighborhood with the subgroup $\SO_{n}$.
We abuse notation by writing either $\cU_1 \subset \Fl_n$,
$\cU_1 \subset \SO_n$ or $\cU_1 \subset \Spin_n$
since the manifolds on the right hand side are locally identified:
$\Spin_n$ is a double cover of $\SO_n$
and $\SO_n$ is a finite cover of $\Fl_n$.
Also, a $QR$ decomposition  of an invertible matrix $A \in \GL_n$
is a pair of matrices $Q$ and $R$ such that $A = QR$,
$Q \in O_n$ is orthogonal and
$R \in \Up_n^{+}$ is upper triangular with positive diagonal entries:
this decomposition is smoothly and uniquely defined in $\GL_n$.
These decompositions allow us to define the diffeomorphisms
$\bQ: \Lo_n^1 \to \cU_1$ and $\bL: \cU_1 \to \Lo_n^1$.

%Let us call the osculating flag curves obtained from convex projective curves {\it flag-convex}. 

\smallskip
The following statement can be found in e.g. \cite{Sh, GoSaX, GoSa0}.
Recall that $\cJ$ is the set of $n\times n$ real matrices $A$
such that the entry $A_{ij}$
is positive if $i = j+1$ and $0$ otherwise.
In other words, $A \in \cJ$ if and only if
$A$ has strictly positive subdiagonal entries
(i.e. entries in positions $(j+1,j)$) and zero entries elsewhere.

\begin{LEM}\label{lem:char}
Consider an interval $I \subseteq \RR$ and
a smooth curve $\Gamma: I \to \Lo_n^1$. 
Then $\Gamma$ is the osculating flag curve  of 
a convex projective curve $\gamma: I \to \bP^{n-1}$
(i.e.,  $\gamma_{\Ff} = \bQ \circ \Gamma$ where $\bQ: \Lo_n^1 \to \cU_1 \subset \Fl_n$ is  the above diffeomorphism)  
% is considered in an open Schubert cell in $\Fl_n$ identified with $\Lo_n^1$) 
% \emph{flag-convex} 
if and only if, for every $t \in I$,
the logarithmic derivative
$(\Gamma(t))^{-1}\Gamma'(t)$ belongs to the set $\cJ$.
\end{LEM}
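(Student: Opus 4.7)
My plan is to handle the two implications via an explicit matrix model for the osculating flag. For a smooth lift $\tilde\gamma\colon I\to\RR^n\setminus\{0\}$ of $\gamma$, let
\[
M(t)\;=\;\bigl[\tilde\gamma(t)\mid\tilde\gamma'(t)\mid\cdots\mid\tilde\gamma^{(n-1)}(t)\bigr];
\]
the first $k$ columns of $M(t)$ span the $k$-th step of $\gamma_\Ff(t)$. If $\tilde\gamma$ is non-degenerate, $M(t)\in\GL_n$ and $\tilde\gamma$ satisfies a (uniquely determined) linear ODE, so $M'(t)=M(t)A(t)$ where $A(t)$ is the corresponding companion matrix: zero except for $1$'s on the subdiagonal and an arbitrary last column.

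For the forward direction, start from convex $\gamma$. The discussion before the lemma places $\gamma_\Ff$ inside the top Schubert cell $\cU_1$, i.e., all leading principal minors of $M(t)$ are nonzero on $I$; being smooth and nonzero on an interval, each such minor has constant sign. Rescaling basis vectors $e_j\mapsto\epsilon_je_j$ with $\epsilon_j\in\{\pm1\}$ multiplies the $k$-th leading principal minor by $\epsilon_1\cdots\epsilon_k$, and a triangular choice of signs makes every such minor strictly positive. Then $M(t)=\Gamma(t)U(t)$ pointwise, with $\Gamma(t)\in\Lo_n^1$ and $U(t)\in\Up_n$ satisfying $U_{jj}(t)>0$; since $U$ is upper triangular, the column flag of $\Gamma$ coincides with the column flag of $M$, so $\gamma_\Ff=\bQ\circ\Gamma$. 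Substituting $M=\Gamma U$ into $M^{-1}M'=A$ and separating lower and upper triangular parts gives
\[
\Gamma^{-1}\Gamma'\;=\;\bigl(UAU^{-1}\bigr)_{\text{strictly lower}}.
\]
A direct computation, using only the shape of $A$ together with upper-triangularity of $U$ and $U^{-1}$, shows that the strictly lower triangular part of $UAU^{-1}$ has $(j+1,j)$ entry equal to $U_{j+1,j+1}/U_{jj}>0$ and vanishes strictly below the subdiagonal; hence $\Gamma^{-1}\Gamma'\in\cJ$.

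For the converse, assume $\Gamma\colon I\to\Lo_n^1$ with $\Gamma^{-1}\Gamma'\in\cJ$; write the subdiagonal entries as $\lambda_j(t)>0$. Take $\tilde\gamma(t)=\Gamma(t)e_1$ and $\gamma=[\tilde\gamma]$. Using $\Gamma'=\Gamma\,(\Gamma^{-1}\Gamma')$, a short induction on $k$ yields
\[
\tilde\gamma^{(k)}(t)\;=\;\Bigl(\prod_{i=1}^{k}\lambda_i(t)\Bigr)\Gamma(t)e_{k+1}\;+\;\text{(lin.\ comb.\ of }\Gamma(t)e_1,\ldots,\Gamma(t)e_k),
\]
which simultaneously shows that $\tilde\gamma,\tilde\gamma',\ldots,\tilde\gamma^{(n-1)}$ form a basis (so $\gamma$ is locally convex) and that the $k$-th osculating subspace at $t$ is the span of the first $k$ columns of $\Gamma(t)$, i.e., $\gamma_\Ff=\bQ\circ\Gamma$. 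It remains to verify global convexity. By Lemma~\ref{lem:disconj} this is equivalent to pairwise transversality of $\gamma_\Ff(t_1)$ and $\gamma_\Ff(t_2)$ for $t_1\neq t_2$. Setting $\Pi(t)=\Gamma(t_1)^{-1}\Gamma(t)$, one has $\Pi(t_1)=\IdI$, $\Pi\in\Lo_n^1$, and $\Pi^{-1}\Pi'\in\cJ$; unwinding the identifications, transversality becomes nonvanishing (equivalently, positivity) of the southwest minors $m_k(\Pi(t_2))$ for $1\le k\le n-1$. The Volterra-type multiplicative integral of a $\cJ$-valued path starting at $\IdI$ lies in the totally positive part of $\Lo_n^1$; in particular every $m_k(\Pi(t_2))$ is strictly positive for $t_2>t_1$, yielding transversality.

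The main obstacle I expect is the last step: deducing positivity of all southwest minors of $\Gamma(t_1)^{-1}\Gamma(t_2)$ from $\Gamma^{-1}\Gamma'\in\cJ$. This is the place where the combinatorial positivity condition on the logarithmic derivative has to be translated into honest total positivity of the group element, and it requires invoking the classical fact that multiplicative integrals of $\cJ$-valued paths remain in the totally positive semigroup, rather than being derived from scratch within this lemma.
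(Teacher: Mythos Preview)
The paper does not actually prove this lemma: it is quoted from the literature (references \cite{Sh,GoSaX,GoSa0}), in keeping with the paper's convention that lettered statements are borrowed results. So there is no ``paper's proof'' to compare against; you have supplied an argument the paper deliberately omits.

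Your argument is essentially correct and is the standard one. A few remarks. In the forward direction, the rescaling of basis vectors is not an ad hoc trick but exactly the coordinate choice implicit in the paragraph preceding the lemma: the top Schubert cell $\cU_1$ (and hence the diffeomorphism $\bQ$) is fixed only after choosing the reference flag $\gamma_\Ff(\tau)$, and your sign adjustment is part of pinning down that identification so that the $LU$ factorization has $U_{jj}>0$. Once that is granted, your computation of the strictly lower part of $UAU^{-1}$ is clean and gives exactly the subdiagonal entries $U_{j+1,j+1}/U_{jj}>0$.

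For the converse, your induction showing that the $k$-th osculating space is the span of the first $k$ columns of $\Gamma$ is the right mechanism, and reducing global convexity via Lemma~\ref{lem:disconj} to nonvanishing of the southwest minors of $\Gamma(t_1)^{-1}\Gamma(t_2)$ is exactly how the cited references proceed. Your self-diagnosed obstacle is the genuine one: the statement that a multiplicative integral of a $\cJ$-valued path starting at $\IdI$ lands in $\Pos$ (for $t_2>t_1$; in $\Neg$ for $t_2<t_1$, which you should also mention) is precisely the Loewner--Whitney type result that underlies Lemma~\ref{lem:ShGoSa0} later in the paper, and it too is cited rather than proved there. So you are right that this step imports a nontrivial external fact; that is consistent with the paper's treatment, which packages Lemma~\ref{lem:char} and Lemma~\ref{lem:ShGoSa0} together as known input from \cite{Sh,GoSa0}.
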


%Let $N_0$ be a nilpotent lower triangular matrix
%with positive subdiagonal entries and zero entries elsewhere;
%let $L_0 \in \Lo_n^1$.
%The curve $\Gamma_{N_0,L_0}: \RR \to \Lo_n^1$ given by
%\begin{equation}\label{eq:curve}
%\Gamma_{N_0,L_0}(t) = L_0 \exp(t N_0)
%\end{equation}
%is flag-convex 
%and, for $i > j$,
%its entry $(i,j)$ is a polynomial of degree $i - j$.
%We call such flag-convex curves \emph{polynomial}.

Let us call the osculating flag curves obtained 
by taking the  flags osculating  convex projective curves
{\it flag-convex} (or sometimes just {\it convex}). 
In other words,
a curve $\Gamma: I \to \SO_n$ is {\it convex}
if and only if
there exists a convex curve $\gamma: I \to \SO_n$
with $\Gamma = \gamma_{\Ff}$.
It turns out that $\Gamma: I \to \SO_n$ is convex
if and only if there exists $Q_0 \in \SO_n$
such that $Q_0 \Gamma(t) \in \cU_1$
for all $t$ in the interior of $I$. 
(Compare with Lemma \ref{lem:disconj};
see also \cite{GoSa0}).
We sometimes abuse notation by identifying 
$\Lo_n^1$ with $\cU_1$ (through the diffeomorphism $\bQ$)
and therefore $\Gamma$ with $\bQ \circ \Gamma$.
% if $\gamma: I \to \RR P^{n-1}$ is such that
% $\gamma_{\Ff} = \Gamma$, $\Gamma: I \to \Lo_n^1$
% (or, more precisely, $\gamma_{\Ff} = \bQ \circ \Gamma$)
% then $\gamma$ is (globally) convex 
% if and only if $\Gamma$ is flag-convex.

\medskip
Given a flag-convex curve $\Gamma: I \to \Lo_n^1$,
define the function $m_{\Gamma,k} = m_k: I \to \RR$, given by 
$$m_k(t) = \det(\swminor(\Gamma(t),k)).$$
Recall that $\swminor(L,k)$ is the $k\times k$ submatrix of $L$
formed by its last $k$ rows and its first $k$ columns. 

Observe now that if we interpret $ \Lo_n^1$
as the top-dimensional cell in $\Fl_n$
with respect to some fixed flag $g\in \Fl_n$,
then the moments of non-transversality of
the flag curve $\Gamma$ with the $(n-k)$-dimensional linear subspace 
 belonging to  $g$ are exactly the zeros of $m_k(t)$.
%Thus  Conjecture \ref{conj:Wr} (if true) implies that

Thus, Conjecture \ref{conj:Wr} is equivalent to saying that
for any $n$, for any $k \le n$, and
for any flag-convex curve $\Gamma: I \to \Lo_n^1$,
the number of real zeroes of the function $m_k(t)$ where $t\in I$,
is at most $k(n-k)$.

\begin{remark}
\label{remark:star}
Consider a flag-convex curve $\Gamma: I \to \Lo_n^1$.
Set $-I = \{t; -t \in I\}$;
let $P_\eta$ be the permutation matrix corresponding
to the top permutation $\eta$
so that $(P_\eta)_{i,n+1-i} = 1$ ($1 \le i \le n$),
with the other entries equal to zero.
Define
\[ \Gamma_{\star}: -I \to \Lo_n^1, \quad
\Gamma_{\star}(t) = P_\eta (\Gamma(-t))^{-\top} P_\eta; \]
a straightforward computation verifies that
the curve $\Gamma_{\star}$ is also flag-convex.
We also have
\begin{align*}
m_{\Gamma_{\star},k}(t) &=
\pm \det(\Gamma_{\star}(t)e_1, \ldots, \Gamma_{\star}(t)e_k,
e_1, \ldots, e_{n-k})
= \\
&=
\pm \det(e_1, \ldots, e_k,
(\Gamma_{\star}(t))^{-1} e_1, \ldots, (\Gamma_{\star}(t))^{-1}e_{n-k})
= 
\pm m_{\Gamma,n-k}(-t). 
\end{align*}
Thus, Conjecture \ref{conj:Wr} for $k = k_0$ implies
the same conjecture for $k = n-k_0$.
\end{remark}

%For a polynomial curve $\Gamma_{N_0,L_0}$, the function 
%$m_k$ is indeed a real polynomial of degree $k(n-k)$. 
%So for polynomial curves, the above estimate obviously holds.
%
%Below we will prove two essential results, namely Theorems~\ref{th:L} and \ref{th:La}. Theorem~\ref{th:L} shows that there exist polynomial flag-convex curves which are non-transversal to the reference flag at $(n^3-n)/6$ distinct points which implies that the above estimate is sharp already for polynomial curves.  The second  Theorem~\ref{th:La} is essentially equivalent to the original Theorem~\ref{coro:ineq}. In geometric terms it means that, for any given a flag-convex curve $\Ga$ in $\Lo_n^1$ realized as the top-dimensional Schubert cell in $\Fl_n$, we can find some complete flag $\tilde f\in \Fl_n$ such that $\Ga$ is non-transversal to $\tilde f$ in exactly $(n^3-n)/6$ distinct points. 
%

%}

% {\color{red}  }
% Let $X\in GL_n(\bR)$.
% For subsets $I,J\subset [1,n]$ such that $|I|=|J|$ we denote by $X_I^J$ the submatrix of $X$ whose rows belong to $I$ and columns belong to $J$.
% 
 %Define the subset $G^0\subset GL_n(\bR)$ consisting of all matrices $X$ whose all $k\times k$ top-left minors $X_{[1,k]}^{[1,k]}$  and top-right minors $X_{[1,k]}^{[n-k+1,n]}$  are nonzero. 
 
Define the open and dense 
subset $\Lo_n^o\subset \Lo_n^1$ given by
$$\Lo_n^o=\{X\in \Lo_n^1 \;|\;
\forall k\in [1,n-1], m_k(X)\ne 0 \}.$$

\begin{remark}
In the notation of \cite{GoSa0}, we have
$\Lo_n^o = \bigsqcup_{q \in \Quat_n} \bQ^{-1}[\Bru_{q\acute\eta}]$.
The set $\Lo_n^o$ is a disjoint union of 
 finitely many connected components.
These connected components were counted in \cite{ShShV4}
and several follow-up papers.
In particular, their number equals $2, 6, 20, 52$ for $n=2,3,4,5$ resp.
and it is equal to $3\times 2^{n-1}$ for all $n\ge 6$. 
\end{remark}
 
We will  specially distinguish two of these connected components. 
Recall that a matrix $L_0 \in \Lo_n^1$ is \emph{totally positive}
provided that,
if a minor is nonzero for some $L \in \Lo_n^1$,
then the corresponding minor is strictly positive for $L_0$ (see \cite{ShSh2});
the set $\Pos \subset \Lo_n^1$ of totally positive matrices
is a contractible connected component of $\Lo_n^o$.
Similarly, the set $\Neg \subset \Lo_n^1$ of totally negative matrices
is another contractible  connected component of $\Lo_n^o$.
For $L \in \Lo_n^1$, we have that $L \in \Neg$ if and only if
$PLP \in \Pos$, where the diagonal matrix $P$ is given by   
$P=\diag(1,-1,1,\cdots, (-1)^{n-1})$. 
Equivalently, for $L \in \Lo_n^1$,
$L \in \Neg$ if and only if $L^{-1} \in \Pos$.
In Lemma \ref{lem:ShGoSa0} we provide an alternative characterization
of the subsets $\Pos, \Neg \subset \Lo_n^1$;
here $\operatorname{id} \in \Lo_n^1$ stands for the identity matrix.

% \medskip
%For the interval $I=[-1,1]$, let $ \gamma:I\to \R P^{n-1}$ be  a  convex projective curve whose osculating flag curve $\gamma_\Ff:I\to \Lo_n^1$ satisfies the initial condition $\gamma_\Ff(0)=\operatorname{id}$, where $\operatorname{id} \in \Lo_n^1$ is the identity $n\times n$-matrix.

 \begin{LEM}[see \cite {Sh}, \cite{GoSa0}]
\label{lem:ShGoSa0}
If $\Gamma: I \to \Lo_n^1$ is flag-convex and $\Gamma(0) = \operatorname{id}$
then $\Gamma(t) \in \operatorname{Pos}$ for $t > 0$
and $\Gamma(t) \in \operatorname{Neg}$ for $t < 0$.
Conversely, if $L_1 \in \operatorname{Pos} \subset \Lo_n^1$
and $L_{-1} \in \operatorname{Neg} \subset \Lo_n^1$ then
there exists a smooth flag-convex curve  $\Gamma: \RR \to \Lo_n^1$ 
such that $\Gamma(-1) = L_{-1}$, $\Gamma(0) = \operatorname{id}$
and $\Gamma(1) = L_1$.
 \end{LEM}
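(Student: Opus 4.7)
The substantive direction is the forward one; the converse follows via an explicit Lusztig-type factorization. I would prove the forward direction by bootstrapping a local statement via the semigroup property of $\Pos$, and expect the local combinatorial step to be the main obstacle.

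\emph{Local positivity at $t=0$.} Taylor-expand
\[
\Gamma(t) = \operatorname{id} + t\,J(0) + \tfrac{t^2}{2}(J(0)^2 + J'(0)) + O(t^3), \qquad J(0) \in \cJ.
\]
For any row set $R = \{r_1 < \cdots < r_k\}$ and column set $C = \{c_1 < \cdots < c_k\}$ with $r_i \ge c_i$, the minor $\det(\Gamma(t)[R|C])$ vanishes at $t=0$ to order exactly $d = \sum_i(r_i - c_i)$, and its leading coefficient is a sum over non-intersecting lattice-path families (Lindström--Gessel--Viennot) in a weighted acyclic graph built from the strictly positive subdiagonal entries of $J(0)$. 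This coefficient is therefore strictly positive, so $\Gamma(t) \in \Pos$ for small $t>0$. Alternatively, one may invoke Loewner's classical theorem that $\exp(sJ)$ is totally positive for $J \in \cJ$ and $s>0$, and approximate the variable-$J$ case via piecewise-constant $J$.

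\emph{Semigroup and bootstrap.} Cauchy--Binet expresses each minor of $AB$ as a sum of products of corresponding minors of $A$ and $B$; for $A, B \in \Pos$ every summand is non-negative and at least one is strictly positive, giving $\Pos \cdot \Pos \subseteq \Pos$. Now set $s_* := \inf\{t \in I \cap (0,\infty) : \Gamma(t) \notin \Pos\}$, and suppose for contradiction $s_* \in I$. Openness of $\Pos$ forces $\Gamma(s_*) \in \partial\Pos$. For small $\epsilon > 0$, the curve $\tilde\Gamma(s) := \Gamma(s_* - \epsilon)^{-1} \Gamma(s_* - \epsilon + s)$ is flag-convex with $\tilde\Gamma(0) = \operatorname{id}$, so the local step yields $\tilde\Gamma(\epsilon) \in \Pos$, and then
\[
\Gamma(s_*) = \Gamma(s_* - \epsilon) \cdot \tilde\Gamma(\epsilon) \in \Pos \cdot \Pos \subseteq \Pos,
\]
a contradiction. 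The analogous $\Neg$ statement for $t<0$ follows from the involution $\Psi(t) := P\,\Gamma(-t)\,P$, which satisfies $\Psi(t)^{-1}\Psi'(t) = -P\,J(-t)\,P \in \cJ$ (so $\Psi$ is flag-convex) and swaps $\Pos \leftrightarrow \Neg$.

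\emph{Converse.} By the Lusztig positive factorization, every $L_1 \in \Pos$ can be written as $L_1 = \prod_{j=1}^{N} y_{i_j}(a_j)$ with $y_i(a) = \operatorname{id} + a\,E_{i+1,i}$ (where $E_{i+1,i}$ is the elementary matrix), $a_j > 0$, and $(i_1,\dots,i_N)$ a fixed reduced word for the longest element of $S_n$. Each factor is the time-$1$ map of an almost-Jacobi flow; perturbing its velocity by small positive multiples of the remaining $E_{\ell+1,\ell}$ places it strictly in $\cJ$, so concatenating the perturbed short flag-convex pieces and smoothing the joints yields a flag-convex curve from $\operatorname{id}$ to a matrix close to $L_1$. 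An implicit-function-theorem adjustment of the $a_j$'s hits $L_1$ exactly. The involution $\Psi$ above converts this construction into a flag-convex curve from $L_{-1} \in \Neg$ to $\operatorname{id}$ on $[-1, 0]$, and smoothing the concatenation at $t=0$ assembles the required $\Gamma : \RR \to \Lo_n^1$.
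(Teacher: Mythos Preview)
The paper does not prove this lemma: it is stated as a result borrowed from the literature (note the letter label and the citations to \cite{Sh} and \cite{GoSa0}), so there is no in-paper proof to compare against. Your task was thus really to supply a proof where the paper defers to references.

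Your forward direction is correct and is essentially the standard argument. The local step (minors of $\Gamma(t)$ have a positive leading Taylor coefficient, via Lindstr\"om--Gessel--Viennot or Loewner) and the semigroup bootstrap via Cauchy--Binet are exactly how this is usually done. The involution $\Psi(t)=P\Gamma(-t)P$ is also the right device for the $\Neg$ half, and your computation $\Psi^{-1}\Psi'=-PJ(-t)P\in\cJ$ checks out (conjugation by $P$ flips the sign of every subdiagonal entry).

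For the converse, the Lusztig factorization strategy is sound, but the write-up glosses over a genuine difficulty: the three smoothing operations (perturbing each $y_{i_j}(a_j)$-flow into $\cJ$, smoothing the interior joints, smoothing at $t=0$) each move the endpoint, and you invoke a single implicit-function adjustment of the $a_j$ to repair this. For that to work you must set things up so that the full ``smoothed product'' map from parameters to $\Lo_n^1$ is a smooth perturbation of the Lusztig map $(a_1,\dots,a_N)\mapsto\prod y_{i_j}(a_j)$, which is a diffeomorphism onto $\Pos$; then the implicit function theorem indeed applies. A cleaner way to handle the joint at $t=0$ specifically is to force both half-curves to coincide with $t\mapsto\exp(tN_0)$ for a fixed $N_0\in\cJ$ on a neighborhood of $0$: since $\exp(-\delta N_0)L_1\in\Pos$ for small $\delta>0$, this costs nothing and removes the need to smooth there while preserving $\Gamma(0)=\operatorname{id}$ exactly. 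With these clarifications the argument goes through.
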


% In the above notation, $\ga_\Ff(t)\in \operatorname{Pos}$ for $t>0$ and  $\gamma_\Ff(t)\in \operatorname{Neg}$ for  $t<0$.
 
Recall that, for any locally convex curve $\ga :I\to \bR P^{n-1}$,
we denote by $\ga_\Ff(t)$ its osculating flag curve and by
$osc_2\ga(t):I\to G_{2,n}$ 
the osculating Grassmann curve obtained by taking the span
of the  first two  columns of $\gamma_\Ff(t)$. 
%{\color{blue} [Include reference for this notation]}. 
Fix the  subspace
$L=\operatorname{span}\langle e_1,\dots,e_{n-2}\rangle \subset \bR^n$
of codimension $2$.
Observe that the intersection of $osc_2\ga $ with $H_L$ is  given by the equation $m_2(t)=0.$ Here $H_L\subset G_{2,n}$ is the Grassmann hyperplane associate with the latter $(n-2)$-dimensional $L$. 

\smallskip
In what follows, 
instead of considering the curve $osc_2\ga(t):I\to G_{2,n}$
we present a related construction.

\smallskip 
Let $\C = \Lo_{(2\times n)}^1 \subset \RR^{2 \times n}$ be the space
of real $(2 \times n)$ matrices $X$
such that $X_{1,n-1} = X_{2,n} = 1$ and $X_{1,n} = 0$.
There is a natural projection $\Pi: \Lo_n^1 \to \C$ taking $L$
to the submatrix formed by its last two rows:
$(\Pi(L))_{i,j} = L_{i+n-2,j}$.
Alternatively, $\Pi(L) = X_0 L$ where $X_0 = \Pi(\operatorname{id}) \in \C$
is the matrix whose only nonzero entries are
$(X_0)_{1,n-1} = (X_0)_{2,n} = 1$.
Equivalently,
let $H_0, H_1 \subset \Lo_n^1$ be the subgroups defined by
\begin{align*}
H_0 &= \{ L \in \Lo_n^1 \;|\; \forall(i,j),
1 \le j < i \le n, i > n-2 \to L_{i,j} = 0 \}; \\
H_1 &= \{ L \in \Lo_n^1 \;|\; \forall(i,j),
1 \le j < i \le n-2 \to L_{i,j} = 0 \}.
\end{align*}
If $L_0 \in H_0$, $L_1 \in H_1$ and $L = L_0 L_1$
then
$L_{i,j} = (L_0)_{i,j}$ if $i \le n-2$ and
$L_{i,j} = (L_1)_{i,j}$ if $i > n-2$.
Thus, any $L \in \Lo_n^1$ can be uniquely written as a product
$L = L_0L_1$ with $L_0 \in H_0$ and $L_1 \in H_1$.
The restriction $\Pi|_{H_1}: H_1 \to \C$ is thus a bijection.
The space $\C$ is naturally identified with $H_0 \backslash \Lo_n^1$,
the set of right cosets of the form $H_0 L$, $L \in \Lo_n^1$;
the map $\Pi$ is now the natural quotient map
$\Lo_n^1 \to H_0 \backslash \Lo_n^1$.

Below  we will treat $X \in \C$ as an $n$-tuple of real column vectors:
$X = (v_1, \ldots, v_n)$, $v_i \in \RR^2$.
In other words, $\C \subset (\RR^2)^n$ is the set of $n$-tuples
$X = (v_1, \ldots, v_n)$ satisfying
\[ v_{n-1} = \begin{pmatrix} 1\\a\end{pmatrix}, \qquad
v_n = \begin{pmatrix} 0\\1\end{pmatrix} \]
for some $a \in \RR$.
Clearly, $m_2(L) = m_2(\Pi(L))$ for all $L \in \Lo_n^1$;
the map $m_2$ is thus well-defined as $m_2: \C \to \RR$. 
For any set $Y = \{i < j\} \subset \{1, 2, \ldots, n\}$ (with $|Y| = 2$),
we define a function $m_Y: \C \to \RR$:
for $X = (v_1, \ldots , v_n)$, set $m_Y(X) = \det(v_i,v_j)$.
Notice that $m_2 = m_{\{1,2\}}$;
for all such $Y = \{i,j\}$, set $\sum Y = i+j$.

% \smallskip 
% In what follows,  instead of considering the curve $osc_2\rho(t):I\to G_{2,n}$  it would be convenient to introduce  the $(2\times n)$-submatrix $(\rho_\Ff)_{[n-1,n]}(t)$  of $\rho_\Ff(t)$ consisting of its two last rows. Observe however that the determinant of the $2\times 2$-minor of $(\rho_\Ff)_{[n-1,n]}(t)$ formed by its first two columns is again $m_2(t)$ whose number of real zeros we want to estimate. 
%  (Indeed, $m_2(t)$ is the Pl\"ucker coordinate given by the $2\times 2$-minor formed by the first two columns of $osc_2\rho$.) 

A smooth curve  $\Gamma_2: I \to  \C$ is called \emph{flag-convex}
if there exists a flag-convex curve $\Gamma: I \to \Lo_n^1$
such that $\Gamma_2 = \Pi \circ \Gamma$.
The set $\cL$ of flag-convex curves $\Gamma_2: I \to \C$
has a smooth Banach manifold structure,
inherited through $\Pi$ from 
the set of flag-convex curves $\Gamma: I \to \Lo_n^1$.
We are interested in proving that if $\Gamma_2$ is flag-convex
then $m_2 \circ \Gamma_2$ has at most $2(n-2)$ real zeroes.

Define $\Pos_2 = \Pi[\Pos] \subset \C$ and $\Neg_2 = \Pi[\Neg] \subset \C$.
Similarly, we say that $X \in \C$ is \emph{totally positive} 
(resp. \emph{negative}) if $X \in \Pos_2$ (resp. $X \in \Neg_2$). 
The following observation is straightforward.

%  \begin{definition} {\rm We say that a $2\times n$-matrix $X$  is \emph{totally positive} if it is formed by the  last two rows of a totally positive unipotent lower triangular matrix  and \emph{totally negative} if it is formed by the last two rows of a totally negative unipotent lower triangular matrix. 
%  We will denote these sets by $\operatorname{Pos_2}$ and $\operatorname{Neg_2}$ respectively.}
%  \end{definition}
 
% {\color{red}

 \begin{lemma} 
A $2\times n$-matrix $X \in \C$
lies in $\operatorname{Pos_2}$ if and only if 
 \begin{itemize} 
\item  $X_{ij}>0$ for all $i\in[1,2]$, $j\in[1,n-2]$ and $i=2$, $j=n-1$;
\item  $m_Y(X) > 0$ for all $Y \subset \{1, \ldots, n\}$, $|Y| = 2$.
 \end{itemize}

% \det(X^{\{k,l\}})>0$ for all pairs $1\le k<l\le n$ where $X^{\{k,l\}}$ is the $2\times 2$-submatrix of $X$ formed by its $k$th and $l$th columns.
 
A $2\times n$-matrix $X \in \C$
lies in  $\operatorname{Neg}_2$  if and only if 
 \begin{itemize}
\item 
$(-1)^{(i+j)} X_{ij}>0$ for all $i\in[1,2]$, $j\in[1,n-2]$ and $i=2$, $j=n-1$;
\item 
$(-1)^{(1+\sum Y)} m_Y(X) > 0$ for all $Y \subset \{1, \ldots, n\}$, $|Y| = 2$.
\end{itemize}
 \end{lemma}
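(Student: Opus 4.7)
The plan is to verify both directions of the two equivalences; the $\operatorname{Neg}_2$ claim will be reduced to the $\operatorname{Pos}_2$ claim via the involution $L \mapsto PLP$ with $P = \operatorname{diag}(1, -1, \ldots, (-1)^{n-1})$. Recall that $L \in \operatorname{Neg} \iff PLP \in \operatorname{Pos}$; and conjugation by $P$ multiplies an entry $L_{i,j}$ by $(-1)^{i+j}$, while it multiplies the $2\times 2$ minor on rows $\{n-1, n\}$ and columns $Y = \{i,j\}$ by $(-1)^{(n-1)+n+\sum Y} = (-1)^{1+\sum Y}$. Thus the signed bullets for $\operatorname{Neg}_2$ translate term by term into the unsigned bullets for $\operatorname{Pos}_2$ applied to $PXP'$ (with an appropriate $2\times n$ sign twist), reducing the $\operatorname{Neg}_2$ case to the $\operatorname{Pos}_2$ case in both directions.

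The forward direction for $\operatorname{Pos}_2$ is a direct unwrapping of the definition of total positivity: if $X = \Pi(L)$ with $L \in \operatorname{Pos}$, then each entry $X_{i,j} = L_{n-2+i,\,j}$ in the stated range is a subdiagonal entry of $L$, hence a strictly positive $1\times 1$ minor, and each $m_Y(X)$ with $Y = \{i,j\}$ coincides with the $2 \times 2$ minor of $L$ on rows $\{n-1, n\}$ and columns $\{i,j\}$, strictly positive by total positivity of $L$.

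For the reverse direction, given $X \in \mathcal{C}$ satisfying the $\operatorname{Pos}_2$ bullets, one must exhibit a lift $L \in \operatorname{Pos}$ with $\Pi(L) = X$. The unique factorization $\operatorname{Lo}_n^1 = H_0 \cdot H_1$ together with the bijection $\Pi|_{H_1}: H_1 \to \mathcal{C}$ parametrizes candidate lifts by $A \in \operatorname{Lo}_{n-2}^1$: the resulting $L$ has the block-triangular form
\[ L = \begin{pmatrix} A & 0 \\ B & C\end{pmatrix}, \qquad (B \mid C) = X. \]
Laplace expansion along the last two rows, exploiting the vanishing of the top-right block, decomposes each potentially nonzero minor of $L$ as a signed sum of products of the form $\det(A_{R_1, S \setminus T}) \cdot m_T(X)$ (or $\det(A_{R_1, S \setminus T}) \cdot X_{i,j}$ when only one bottom row is involved). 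The minors of $L$ involving only the last two rows are positive by the bullet hypothesis, and taking $A$ totally positive in $\operatorname{Lo}_{n-2}^1$ makes every purely top-rows minor positive.

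The main obstacle is arranging that the mixed minors are simultaneously positive. The plan is to perform a positive diagonal conjugation $A \mapsto D_\epsilon A D_\epsilon^{-1}$ with $D_\epsilon = \operatorname{diag}(1, \epsilon, \epsilon^2, \ldots, \epsilon^{n-3})$ and $\epsilon > 0$ small: this preserves total positivity of $A$ while scaling each minor $\det(A_{R_1, S'})$ by $\epsilon^{\sum R_1 - \sum S'}$, so that off-diagonal minors are suppressed. In the Laplace sum defining a mixed minor of $L$, the dominant term as $\epsilon \to 0$ is the one extremizing $\sum T$; a direct Laplace sign calculation and the bullet positivity of the $m_Y(X)$ identifies this leading term as strictly positive. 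Choosing $\epsilon$ small enough then yields $L \in \operatorname{Pos}$ with $\Pi(L) = X$, completing the $\operatorname{Pos}_2$ reverse direction and, via the $P$-conjugation, the $\operatorname{Neg}_2$ reverse direction.
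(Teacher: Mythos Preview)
The paper does not actually supply a proof of this lemma; it merely calls the observation ``straightforward'' and moves on. Your forward direction and your reduction of the $\operatorname{Neg}_2$ case to the $\operatorname{Pos}_2$ case via conjugation by $P$ are both correct and clean.

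The reverse direction for $\operatorname{Pos}_2$, however, contains a genuine error. Sending $\epsilon \to 0$ drives $A$ toward the identity, and the resulting $L$ is \emph{not} totally positive. A concrete failure already at $n=4$: with $A=\begin{pmatrix}1&0\\A_{2,1}&1\end{pmatrix}$, the minor of $L$ on rows $\{2,3,4\}$ and columns $\{1,2,3\}$ equals
\[
A_{2,1}\,m_{\{2,3\}}(X)-m_{\{1,3\}}(X),
\]
which tends to $-m_{\{1,3\}}(X)<0$ as $A_{2,1}\to 0$. Your ``dominant term as $\epsilon\to 0$'' heuristic overlooks that the $A$-minor in the would-be leading summand (the one with $\sum T$ minimal) can be structurally zero in $\operatorname{Lo}_{n-2}^1$, forcing the next surviving term---which may carry a negative Laplace sign---to dominate.

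The fix is to run the scaling in the opposite direction: take $\epsilon\to\infty$, so that the subdiagonal entries of $A$ become large. Then in the Laplace expansion along the bottom rows the dominant summand is the one with $\sum T$ \emph{maximal}, namely $T^{\ast}=\{s_{k-1},s_k\}$, the two largest columns of $S$ (where $k=|R|=|S|$). For this choice $S\setminus T^{\ast}=\{s_1,\dots,s_{k-2}\}$ is dominated by $R_1=\{r_1,\dots,r_{k-2}\}$ (inherited from $R$ dominating $S$), so $\det\bigl((A_0)_{R_1,\,S\setminus T^{\ast}}\bigr)>0$ by total positivity of $A_0$; and the Laplace sign at positions $(k-1,k)\times(k-1,k)$ is $+1$. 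Hence the leading term is strictly positive, and the analogous argument handles the case $|R\cap\{n-1,n\}|=1$. With this correction your overall strategy goes through.
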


Interpret $\C$ as a set of $n$-tuples of vectors $v_i \in \RR^2$.
Let $\C_1 \subset \C$ be the open dense subset of such $n$-tuples
such that, for all $i$, $v_i \ne 0$.
Notice that the complement $(\C \smallsetminus \C_1) \subset \C$
is a union of finitely many submanifolds of codimension $2$.
Consider the set $\cL$ of flag-convex curves $\Gamma_2: I \to \C$
and its subset $\cL_1 \subset \cL$ of curves 
with image contained in $\C_1$.
Then $\cL_1 \subset \cL$ is also open and dense;
moreover, the codimension of the complement is $1$.
In particular, an estimate on the number of zeroes of $m_2 \circ \Gamma_2$
for $\Gamma_2 \in \cL_1$
automatically implies the same estimate for $\Gamma_2 \in \cL$.

Let $\C_2 \subset \C_1$ be the open dense subset of $n$-tuples
$X = (v_1, \ldots, v_n)$ such that
$m_Y(X) = 0$ holds for at most one such set $Y$;
let $\C_3 \subset \C_2$ be the open dense subset of $n$-tuples
$X = (v_1, \ldots, v_n)$ such that,
for all such sets $Y$, $m_Y(X) \ne 0$.
In other words, for $X = (v_1, \ldots, v_n) \in \C_1$, we have
$X \in \C_3$ if and only if the vectors $v_1, \ldots, v_n$
are pairwise linearly independent.

Notice that the complement of $\C_2$ is a union of finitely many
submanifolds of codimension at least $2$.
Thus $\C_2$ is path-connected and
generic flag-convex curves $\Gamma_2: I \to \C$
are of the form $\Gamma_2: I \to \C_2$.
As we shall see, $\C_3$ has exactly $2^{n-2}\cdot (n-1)!$
connected components, all contractible.
Connected components of $\C_3$ are labeled
by signed cyclic words, as we explain below.

% Define $\tilde \C=\C \setminus \operatorname{Noll}$ where $\operatorname{Noll}\subset \C$ is the set of all $2\times n$-matrices with one or more columns vanishing. Observe that the real codimension of $\operatorname{Noll}$ in $\C$ is $2$. Hence, generic curves in $\C$  avoid $\operatorname{Noll}$ and lie completely in $\tilde \C$. 

Namely, consider cyclic words $w$ of length $2n$
in the alphabet $1,\dots,n,1',\dots,n'$
such that $w$ contains each letter exactly once.
We say that such a word $w$ is  {\it admissible} (or {\it odd})
if, for every $i$, there are exactly $n-1$ other letters
between the letters $i$ and $i'$:
let $\fW$ denote the set of admissible words.
For example, for $n = 3$, we have
\begin{align*}
\fW &= \{
123'1'2'3,12'3'1'23,1'23'12'3,1'2'3'123, \\
&\qquad 213'2'1'3,2'13'21'3,21'3'2'13,2'1'3'213 \}.
\end{align*}
In general, we have $|\fW| = 2^{n-1}\cdot (n-1)!$
(fix $n$ and $n'$;
choose one among the $(n-1)!$ permutations of $\{1,\ldots,n-1\}$
to fill in the gap between $n$ and $n'$;
for $i$ from $1$ to $n-1$ choose the positions of $i$ and $i'$).

Words in $\fW$ should be imagined as written along a circle,
always counter-clockwise.
Given $i < j$, we say that we walk from $i$ to $j$ counter-clockwise in $w$
in there are fewer than $n-1$ letters after $i$ and before $j$:
in this case we write $m_{\{i,j\}}(w) > 0$
(otherwise $m_{\{i,j\}}(w) < 0$).
Equivalently, $m_{\{i,j\}}(w) > 0$ if and only if 
one encounters the triple $i,\dots, j ,\dots , i'$
when reading $w$ (counter-clockwise);
of course, $m_{\{i,j\}}(w) < 0$ if and only if 
one encounters instead the triple $i,\dots, j' ,\dots , i'$.
Thus, for instance, if $n = 5$ and $w = 13'4'25'1'342'5$
then
$m_{\{1,2\}}(w) > 0$, $m_{\{1,3\}}(w) < 0$,
$m_{\{1,4\}}(w) < 0$, $m_{\{1,5\}}(w) < 0$,
$m_{\{2,3\}}(w) > 0$, $m_{\{2,4\}}(w) > 0$,
$m_{\{2,5\}}(w) < 0$,
$m_{\{3,4\}}(w) > 0$, $m_{\{3,5\}}(w) > 0$ and
$m_{\{4,5\}}(w) > 0$.
If $w$ is a word, we assume that $m_Y(w) \in \{\pm 1\}$.
Let $\fW^{+} \subset \fW$ be the set of admissible words $w$
for which $m_{\{n-1,n\}}(w) > 0$;
we have $|\fW^{+}| = 2^{n-2}\cdot (n-1)!$.

We now show how to assign a word $w(X) \in \fW^{+}$
to each $X \in \C_3$.
Given $X = (v_1, \ldots, v_n)$,
set $\nu(X) = (\hat v_1, \ldots, \hat v_n) \in  (\Ss^1)^n$
where $\hat v_i = v_i/|v_i| \in \Ss^1$.
Let $\Omega_n = \nu[\C_3]$;
in other words,
$\Omega_n\subset (\Ss^1)^n$ is the set of configurations
of $n$ pairwise linearly independent labeled points on $\Ss^1$
such that point $n$ is $(0,1)$
and point $n-1$ has coordinates $(x,y)$ with $x>0$.
Given $X = (v_1, \ldots, v_n)$,
label the point $\hat v_i$ by $i$ and the point $-\hat v_i$ by $i'$.
Finally, traverse the unit circle $\Ss^1$ counter-clockwise,
picking up the labels as you read, to obtain the desired word $w(X)$.
Notice that $m_Y(X) > 0$ if and only if $m_Y(w(X)) > 0$;
in particular, $w(X) \in \fW^{+}$, as desired.

\medskip\noindent
\begin{remark} 
 {\rm The above discussion implies that  the cyclic word $w(M)$ corresponding to any totally positive $2\times n$-matrix $M$  coincides with the cyclic word given by $(1,2,\dots,  n,  1', 2', \dots,  n')$. % (Recall that a {\it totally positive matrix} is a matrix all minors of which are positive.) 
 We will call this particular cyclic word \emph{totally positive}. 
 The cyclic word corresponding to a totally negative matrix is obtained from the totally positive word by interchanging its every even entry with the opposite
and reading it backwards.
 We will call this cyclic word \emph{totally negative}.}
 \end{remark}
 \begin{example} {The cyclic word 
 $(12341'2'3'4')$ is totally positive while the cyclic word 
 % $(12'34'1'23'4)$
 $(43'21'4'32'1)$ is totally negative.}
 \end{example}
 
%Opishem konfiguraciyu tochek na okruzhnosti s pomoschjyu ciklicheskih slov.

%Tochki numeruem 1,2,..n.
%Tochki centralno simmetrichnye oboznachim 1',2',\dots n'

%Vse opredelyaetsya slovom tipa 1 2' 3 1' 2 3'  s usloviem chto mezhdu k i k' nahoditsya rovno n-1 drugih tochek.
%Nazovem takie slova razreshennymi.

%Naprimer, vpolne polozhitelnyj grassmanov element zadaetsya 12 n 1'2'.. n'

%Znak 2x2 minora opredelyaetsya tak: esli "\dots i .. j .. i' \dots ",  to minor ij - polozhitelnyj.  Esli "\dots i \dots  i' .. j..", to ij - otricatelnyj.

In all figures in the remaining part of  this section  the cyclic words should be read counter-clockwise along the circle. %

Given $w_0 \in \fW^{+}$,
let $\C[w_0] \subset \C_3$ be the set of matrices
$X \in \C_3$ for which $w(X) = w_0$.
Notice that $\C_3 = \bigsqcup_{w \in \fW^{+}} \C[w]$;
the following lemma shows that these subsets
are all well-behaved.

\begin{lemma}
\label{lm:contract} 
Given $w_0 \in \fW^{+}$,
the set $\C[w_0]$ is contractible (and nonempty).
\end{lemma}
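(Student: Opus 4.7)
The plan is to realize $\C[w_0]$ as a product of two contractible convex sets via an explicit parametrization: $\C[w_0] \cong P_{w_0} \times \RR_{>0}^{n-2}$, where $P_{w_0}$ is a nonempty open convex polytope in $\RR^{n-1}$.

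First I would set up polar-like coordinates. Fix $X = (v_1, \ldots, v_n) \in \C[w_0]$. Since $\hat v_n = (0,1)$ and the vectors are pairwise linearly independent, each $\hat v_i$ with $i \le n-1$ has nonzero first coordinate, so there is a unique sign $\epsilon_i \in \{\pm 1\}$ such that $\epsilon_i \hat v_i$ lies in the open right semicircle $R = \{(x,y) \in \Ss^1 : x > 0\}$. The function $X \mapsto (\epsilon_1,\ldots,\epsilon_{n-1})$ is locally constant on $\C_3$ and is therefore determined by $w_0$ alone: concretely, $\epsilon_i = +1$ iff the letter $i$ (rather than $i'$) appears in the counterclockwise arc of $w_0$ from $n'$ to $n$. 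The hypothesis $w_0 \in \fW^{+}$ forces $\epsilon_{n-1} = +1$, consistent with $v_{n-1} = (1,a)$ having positive first coordinate.

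Next I would write $\epsilon_i \hat v_i = (\cos\theta_i,\sin\theta_i)$ with $\theta_i \in (-\pi/2,\pi/2)$, and for $i \le n-2$ set $r_i = |v_i| > 0$. The map
\[ X \mapsto (\theta_1,\ldots,\theta_{n-1},r_1,\ldots,r_{n-2}) \]
is a diffeomorphism onto an open subset of $(-\pi/2,\pi/2)^{n-1} \times \RR_{>0}^{n-2}$; its inverse sends the tuple to $v_n=(0,1)$, $v_{n-1}=(1,\tan\theta_{n-1})$ and $v_i = r_i \epsilon_i(\cos\theta_i,\sin\theta_i)$ for $i \le n-2$. The constraint $w(X) = w_0$ becomes a linear-order condition on the $\theta_i$: traversing $R$ counterclockwise (from angle $-\pi/2$ up to $\pi/2$) visits exactly the $n-1$ right-half letters of $w_0$, in order of increasing $\theta_i$, so $w_0$ selects a permutation $\sigma \in S_{n-1}$ and the image of the map is $P_{w_0} \times \RR_{>0}^{n-2}$ with
\[ P_{w_0} = \{\theta \in (-\pi/2,\pi/2)^{n-1} : \theta_{\sigma(1)} < \cdots < \theta_{\sigma(n-1)}\}. \]
The analogous left-half order is automatic from the antipodal pairing built into $\fW$, so no extra constraint arises. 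Since $P_{w_0}$ is a nonempty open convex subset of $\RR^{n-1}$ (any strictly monotone sequence of angles works) it is contractible, and $\RR_{>0}^{n-2}$ is likewise contractible, which proves the claim.

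The main obstacle is the combinatorial bookkeeping in the last step: one must verify that the signs $\epsilon_i$ and the permutation $\sigma$ extracted combinatorially from $w_0$ really do reproduce the geometric cyclic order of labels on $\Ss^1$, and that the left-half letters come along for free by antipodality. This is not conceptually deep but requires carefully unwinding the definition of $w(X)$ and the admissibility condition defining $\fW$.
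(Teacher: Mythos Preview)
Your argument is correct and gives an explicit global chart $\C[w_0] \cong P_{w_0} \times \RR_{>0}^{n-2}$, which is more than the paper actually proves. The paper instead proceeds by induction on $n$: deleting the letters $1,1'$ from $w_0$ (and relabelling) produces a word $w_1 \in \fW_{n-1}^{+}$, and deleting the first column of $X \in \C[w_0]$ produces $X_1 \in \C[w_1]$; by induction $\C[w_1]$ is contractible, and the fiber over each $X_1$ (the set of allowed $v_1$) is an open convex cone in $\RR^2$, hence contractible. Your approach avoids the induction and yields a concrete convex model for $\C[w_0]$ in a single step; the paper's approach is shorter to write down but leaves the global shape of $\C[w_0]$ implicit. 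The bookkeeping you flag as the main obstacle is indeed routine: the signs $\epsilon_i$ are exactly the signs of $m_{\{i,n\}}(w_0)$, the permutation $\sigma$ is read off from the arc of $w_0$ between $n'$ and $n$, and antipodality of the word forces the left-half order to match the right-half order automatically.
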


\begin{proof}
We proceed by induction on $n$; the cases $n \le 3$ are easy.
In this proof, we write $\fW_n^{+}$ in order to avoid confusion.

Given a word $w_0 \in \fW_n^{+}$, let $w_1 \in \fW_{n-1}^{+}$
be obtained by removing $1$ and $1'$ from $w_0$ and then
subtracting $1$ from each remaining label
(thus, for instance, if $w_0 = 13'452'1'34'5'2$
then $w_1 = 2'34'1'23'4'1$).
Similarly, given $X_0 = (v_1, v_2, \ldots, v_n) \in \Lo^1_{(2\times n)}$ 
we obtain $X_1 = (v_2, \ldots, v_n) \in \Lo^1_{(2\times (n-1))}$
by removing the first column.

By induction hypothesis, $\C[w_1]$ is contractible.
Given $X_1 \in \C[w_1]$, the set of vectors $v_1 \in \RR^2$
which can be placed at the left of $X_1$ to obtain $X_0 \in \C[w_0]$
is a convex cone.
Thus $\C[w_0]$ is also contractible, as desired.
\end{proof}

Given a flag-convex curve $\Gamma_2: I \to \C_2$,
we are now interested in following the sequence of words $w$
corresponding to the sets $\C[w]$ traversed by $\Gamma_2$.
We first present a combinatorial description.
Let us now define \emph{admissible moves}
on the set of all admissible cyclic words.
(Below $\dots$ stands for an arbitrary sequence of labels
in  an admissible word.)

\smallskip
\begin{definition}\label{def:admissible}
	{  %{\color{red} 
	Below, we denote by $\alpha$ either the label $j$ or its opposite label $j'$. If $\alpha=j$, then $-\alpha=j'$, if $\alpha=j'$ then $-\alpha=j$.  %}
	For every $k>2$, the following two    moves are called  {\it admissible}: 
	
	\medskip
	\noindent 1) clockwise rotation of label $(k-1)$ toward point $k$\newline
	\noindent
	\small{
		$\dots,  k', \dots,  k-1, j, \dots,  k, \dots  (k-1)', j',\dots \to    \dots, k',\dots,  j, k-1, \dots, k, \dots j',(k-1)',\dots$}\\
\noindent 2) counter-clockwise rotation of label $(k-1)$ toward label $k$\newline
	\small{$\dots,  k, \dots, j, k-1, \dots,  k', \dots,  j',(k-1)',\dots \to  \dots,  k, \dots,  k-1,  j, \dots,  k', \dots (k-1)',j',\dots $
	}

%{\color{red} 
The first admissible move describes the change of a cyclic word when the label $(k-1)$ rotates clockwise toward the label $k$ 
and passes the position of the label $j$ (or $j'$) while the second  admissible move describes the similar change when the label $(k-1)$ rotates counterclockwise. %}	
	
	\begin{figure}[H]
		\centering
		\resizebox{\MyFig}{!}{
			\begin{tikzpicture}[line cap=round,line join=round,>=triangle 45,x=1.0cm,y=1.0cm]
			\clip(0,0) rectangle (10.,3.5);
			
			\draw(2.5,2.) circle (1cm);
			\node[circle,fill=black,inner sep=0pt,minimum size=3pt,label=above:{\small{$k'$}}] (a) at (2.5 cm,3 cm) {};
			\node[circle,fill=black,inner sep=0pt,minimum size=3pt,label=below:{\small{$k$}}] (b) at (2.5 cm,1 cm) {};
			\draw[dashed] (a) - -  (b);
			\node[circle,fill=black,inner sep=0pt,minimum size=3pt,label=below right:{\small{$(k-1)'$}}] (c) at (3.37 cm,1.5 cm) {O};
			\node[circle,fill=black,inner sep=0pt,minimum size=3pt,label=above left:{\small{$k-1$}}] (d) at (1.63 cm,2.5 cm) {O};
			\draw[red,dashed] (c) - -  (d);
			\node[circle,fill=black,inner sep=0pt,minimum size=3pt,label=right:{\small{$-\alpha$}}] (e) at (3.5 cm,2 cm) {};
			\node[circle,fill=black,inner sep=0pt,minimum size=3pt,label=left:{\small{$\alpha$}}] (f) at (1.5 cm,2 cm) {};
			\draw[dashed] (e) - -  (f);
			
			\draw(7.5,2.) circle (1cm);
			\node[circle,fill=black,inner sep=0pt,minimum size=3pt,label=above:{\small{$k'$}}] (k) at (7.5 cm,3 cm) {};
			\node[circle,fill=black,inner sep=0pt,minimum size=3pt,label=below:{\small{$k$}}] (l) at (7.5 cm,1 cm) {};
			\draw[dashed] (k) - -  (l);
			\node[circle,fill=black,inner sep=0pt,minimum size=3pt,label=above right:{\small{$(k-1)'$}}] (m) at (8.37 cm,2.5 cm) {O};
			\node[circle,fill=black,inner sep=0pt,minimum size=3pt,label=below left:{\small{$k-1$}}] (n) at (6.63 cm,1.5 cm) {O};
			\draw[red,dashed] (m) - -  (n);
			\node[circle,fill=black,inner sep=0pt,minimum size=3pt,label=right:{\small{$-\alpha$}}] (o) at (8.5 cm,2 cm) {};
			\node[circle,fill=black,inner sep=0pt,minimum size=3pt,label=left:{\small{$\alpha$}}] (p) at (6.5 cm,2 cm) {};
			\draw[dashed] (o) - -  (p);
			
			\end{tikzpicture}
		}
		\caption{The first admissible move. The  label $k-1$ rotates counter-clockwise towards the label $k$.}% (The cyclic word is read clockwise).}
	\end{figure}
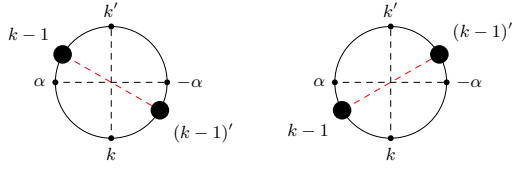

\vskip-0.7cm	
		
	\begin{figure}[H]
		\centering
		\resizebox{\MyFig}{!}{
			\begin{tikzpicture}[line cap=round,line join=round,>=triangle 45,x=1.0cm,y=1.0cm]
			\clip(0,0) rectangle (10.,3.5);
			
			\draw(2.5,2.) circle (1cm);
			\node[circle,fill=black,inner sep=0pt,minimum size=3pt,label=above:{\small{$k$}}] (a) at (2.5 cm,3 cm) {};
			\node[circle,fill=black,inner sep=0pt,minimum size=3pt,label=below:{\small{$k'$}}] (b) at (2.5 cm,1 cm) {};
			\draw[dashed] (a) - -  (b);
			\node[circle,fill=black,inner sep=0pt,minimum size=3pt,label=above right:{\small{$(k-1)'$}}] (c) at (3.37 cm,2.5 cm) {O};
			\node[circle,fill=black,inner sep=0pt,minimum size=3pt,label=below left:{\small{$k-1$}}] (d) at (1.63 cm,1.5 cm) {O};
			\draw[red,dashed] (c) - -  (d);
			\node[circle,fill=black,inner sep=0pt,minimum size=3pt,label=right:{\small{$-\alpha$}}] (e) at (3.5 cm,2 cm) {};
			\node[circle,fill=black,inner sep=0pt,minimum size=3pt,label=left:{\small{$\alpha$}}] (f) at (1.5 cm,2 cm) {};
			\draw[dashed] (e) - -  (f);
			
			\draw(7.5,2.) circle (1cm);
			\node[circle,fill=black,inner sep=0pt,minimum size=3pt,label=above:{\small{$k$}}] (k) at (7.5 cm,3 cm) {};
			\node[circle,fill=black,inner sep=0pt,minimum size=3pt,label=below:{\small{$k'$}}] (l) at (7.5 cm,1 cm) {};
			\draw[dashed] (k) - -  (l);
			\node[circle,fill=black,inner sep=0pt,minimum size=3pt,label=below right:{\small{$(k-1)'$}}] (m) at (8.37 cm,1.5 cm) {O};
			\node[circle,fill=black,inner sep=0pt,minimum size=3pt,label=above left:{\small{$k-1$}}] (n) at (6.63 cm,2.5 cm) {O};
			\draw[red,dashed] (m) - -  (n);
			\node[circle,fill=black,inner sep=0pt,minimum size=3pt,label=right:{\small{$-\alpha$}}] (o) at (8.5 cm,2 cm) {};
			\node[circle,fill=black,inner sep=0pt,minimum size=3pt,label=left:{\small{$\alpha$}}] (p) at (6.5 cm,2 cm) {};
			\draw[dashed] (o) - -  (p);
			
			\end{tikzpicture}
		}
		\caption{The second admissible move. The label $k-1$ rotates clockwise towards the label $k$.}% The cyclic word is read clockwise}
	\end{figure}
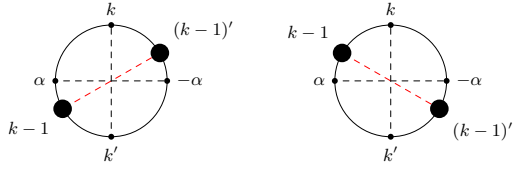
	}	
\end{definition}

%Opredelim razreshennye operacii s razreshennymi slovami:

%Pust' k >1, togda mozhno preobrazovat' slovo 
% 1) "\dots  k'\dots  k-1 j \dots  k\dots " -> "\dots k'\dots  jk-1\dots k\dots "
% 2) "\dots  k\dots . j k-1 \dots  k'\dots " ->"\dots  k \dots  k-1 j\dots  k'\dots "

For any flag-convex curve $\Gamma_2: I \to \C_2$
there are finitely many $t \in I$ for which $\Gamma_2(t) \notin \C_3$.
Indeed, a flag-convex curve transversally intersects
any hypersurface which is a connected component 
of $\C_2 \smallsetminus \C_3$;
a stronger result is proved in \cite{GoSa0}.

\begin{lemma}
\label{lemma:admissible}
Given a flag-convex curve $\Gamma_2: I \to \C_2$,
consider the sequence of words $w \in \fW^{+}$ for which
$\Gamma_2$ traverses $\C[w]$.
Then this sequence of words consists of admissible moves. 
\end{lemma}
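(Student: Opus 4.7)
The plan is to analyze what happens as $\Gamma_2$ crosses one of its isolated transition times $t_0 \in I$, i.e., times at which $\Gamma_2(t_0) \notin \C_3$. By the transversality result referenced immediately before the lemma (a stronger statement proved in \cite{GoSa0}), such transition times are isolated, and at each of them exactly one minor $m_{Y_0}$, with $Y_0 = \{i_0,j_0\}$ and $i_0 < j_0$, vanishes, doing so transversally. The task is to show that the change in the word $w(\Gamma_2(t))$ across $t_0$ matches one of the moves of Definition~\ref{def:admissible}.

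The essential tool is the first-order dynamics. Lift $\Gamma_2$ to a flag-convex curve $\Gamma: I \to \Lo_n^1$ with $\Gamma_2 = \Pi \circ \Gamma$; by Lemma~\ref{lem:char}, $\Gamma^{-1}\Gamma' \in \cJ$ has positive subdiagonal entries $c_1(t),\dots,c_{n-1}(t)$ and zeros elsewhere. Since $\Gamma_2(t) = X_0\,\Gamma(t)$ consists of the last two rows of $\Gamma(t)$, reading off columns gives
\[
v_j'(t) = c_j(t)\, v_{j+1}(t) \quad (1 \le j < n), \qquad v_n'(t) = 0.
\]
Consequently, on $\Ss^1$ each unit vector $\hat v_j$ with $j < n$ rotates along the shorter arc toward $\hat v_{j+1}$, while simultaneously $-\hat v_j$ rotates the same way toward $-\hat v_{j+1}$, and $\hat v_n$ stays fixed. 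In particular $m_{\{n-1,n\}} \equiv \det(v_{n-1},v_n) = 1$, ruling out $Y_0 = \{n-1,n\}$.

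At $t_0$ the vectors $\hat v_{i_0}$ and $\pm\hat v_{j_0}$ coincide at a common point $P \in \Ss^1$, and since every other minor is nonzero at $t_0$ no further label lies at $\pm P$. By continuity and transversality, for $t$ close to but distinct from $t_0$ the two labels at $P$ (respectively $-P$) are adjacent in the word and their order is reversed across $t_0$. Thus the combinatorial change at $t_0$ is a pair of antipodal adjacent transpositions, precisely the shape of the moves in Definition~\ref{def:admissible}; it remains to choose parameters $(k-1,\alpha)$. If $j_0 = i_0+1$, then $v_{i_0}'(t_0) = c_{i_0}(t_0)\,v_{j_0}(t_0)$ is parallel to $v_{i_0}(t_0)$, so $\hat v_{i_0}$ has vanishing angular velocity at $t_0$ while $\hat v_{j_0}$ rotates nontrivially toward $\hat v_{j_0+1}$; I set $k-1 = j_0$, $k = j_0+1$ (valid since $j_0 \le n-1$), and $\alpha \in \{i_0, i_0'\}$. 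If $j_0 > i_0+1$, then $\hat v_{i_0}$ has nonzero angular velocity at $t_0$ (because $m_{\{i_0,i_0+1\}}(t_0) \ne 0$), and I set $k-1 = i_0$, $k = i_0+1 \ne j_0$, and $\alpha \in \{j_0, j_0'\}$. In either case, the dynamics place $\hat v_k$ on the short arc from $P$ toward which $\hat v_{k-1}$ is moving, so the swap realizes the before/after pattern of move~(1) or move~(2), according to whether that motion is counter-clockwise or clockwise on $\Ss^1$.

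The step I expect to be most delicate is the detailed matching with the before/after patterns of Definition~\ref{def:admissible}: one must check that $\alpha$ is adjacent to $k-1$ at $t_0^\pm$, that $\hat v_k$ and $-\hat v_k$ lie on the correct arcs, and that no third label intrudes between them. The last point follows from $\Gamma_2(t_0) \in \C_2$ together with continuity; the adjacency of $k-1$ and $\alpha$ at $P$ follows from their coincidence at $t_0$ combined with the absence of other labels near $\pm P$; and the placement of $\hat v_k$ reduces to the sign of $\sin(\theta_k(t_0) - \theta_{k-1}(t_0))$, which by the dynamics equals the sign of $\theta_{k-1}'(t_0)$ and so determines whether move~(1) or move~(2) is the correct description. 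The antipodal swap at $-P$ is then automatic from the antipodal symmetry $\hat v_j \leftrightarrow -\hat v_j$.
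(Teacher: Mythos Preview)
Your approach is the same as the paper's: compute the column dynamics $v_j' = c_j v_{j+1}$ from Lemma~\ref{lem:char}, observe that each label $j$ moves along the short arc toward label $j+1$, and conclude that each crossing of a wall of $\C_2 \smallsetminus \C_3$ is one of the swaps described in Definition~\ref{def:admissible}. The paper's version is terser --- it simply says the infinitesimal motion is a positive combination of the elementary ``$j$ toward $j+1$'' motions and therefore the evolution is approximated by a sequence of admissible moves --- whereas you try to pin down explicitly which label plays the role of $k-1$.

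That extra precision is where a small gap appears. In your Case~2 ($j_0 > i_0+1$) you always set $k-1 = i_0$, but the direction of the swap in the cyclic word is governed by the \emph{relative} angular velocity $\theta_{i_0}' - \theta_{j_0}'$, not by $\theta_{i_0}'$ alone. If $\hat v_{j_0}$ happens to move in the same sense as $\hat v_{i_0}$ but faster, the adjacent transposition puts $i_0$ \emph{farther} from $i_0+1$ in the word, so it does not match either pattern of Definition~\ref{def:admissible} with $k-1 = i_0$; it is, however, an admissible move with $k-1 = j_0$ (label $j_0$ passes $i_0$ on its way toward $j_0+1$). Your argument would be complete if you checked which of $i_0, j_0$ actually crosses the other in the direction of its own successor and chose $k-1$ accordingly (or, in the paper's spirit, simply noted that one of the two choices always works). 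A related boundary issue is that your identification gives $k = 2$ when $i_0 = 1$, which clashes with the constraint $k > 2$ in the stated definition; this is harmless in practice (the paper itself later treats ``moving point $1$'' as a legitimate case), but it is another place where letting $k-1 = j_0$ when needed cleans things up.
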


\begin{proof}
By Lemma~\ref{lem:char},  
the tangent vector to the curve $\Gamma_2$ at $t_0$
belongs to the cone spanned by the vectors $\Gamma_2(t_0) \frakl_j$,
where $\frakl_j$ is the $n\times n$ matrix whose only nonzero entry
is located at the position $(j+1,j)$ and is equal to $1$. 
Note that  the right multiplication of an arbitrary $k\times n$-matrix
by $(\operatorname{id} + s\,\frakl_j)$ acts as a column operation
adding $s$ times the $j+1$st column  to the $j$th column. 
% the projection of the tangent vector $\nu_*(\rho_\Ff(t_0)\cdot \frakl_j)$ is the tangent vector to $\Omega_n$ 
This corresponds to moving (infinitesimally) the  point labelled $j$
towards the point labelled  $j+1$ 
along the shortest of the two arcs of $\Ss^1$ connecting them.
   %{\color{red}  
Since any infinitesimal motion of the point configuration
induced by the curve  $\Gamma_2$ 
is represented as a positive linear combination
of such infinitesimal elementary moves
we can approximate the whole time evolution
of the point configuration
as a sequence of consecutive elementary moves
described in Definition~\ref{def:admissible}. %}
\end{proof}
 
\begin{example}
\label{ex:L0N}{
Set
\[ \Gamma(t) = L_0 \exp(t N), \quad
L_0 = \begin{pmatrix} 1 & 0 & 0 & 0 \\ 0 & 1 & 0 & 0 \\
1/6 & 0 & 1 & 0 \\ 1/8 & 1/5 & 0 & 0 \end{pmatrix},
\quad
N = \begin{pmatrix} 0 & 0 & 0 & 0 \\ 1 & 0 & 0 & 0 \\
0 & 1 & 0 & 0 \\ 0 & 0 & 1 & 0 \end{pmatrix}. \]
The curve $\Gamma: \RR \to \Lo_4^1$ is flag-convex.
A simple computation verifies that the flag-convex curve $\Gamma_2: \RR \to \C$
is of the form $\Gamma_2: \RR \to \C_2$.
Indeed, the values of $t$ for which $m_Y(\Gamma_2(t)) = 0$ for some $Y$ are:
$t_1 \approx -0.63$ (for $Y = \{2,3\}$);
$t_2 = 0$ (for $Y = \{2,4\}$);
$t_3 \approx 0.26$ (for $Y = \{1,2\}$);
$t_4 \approx 0.63$ (for $Y = \{2,3\}$);
$t_5 \approx 0.77$ (for $Y = \{1,3\}$);
$t_6 \approx 1.11$ (for $Y = \{1,2\}$).
The corresponding sequence of words is:
$w_0 = 143'21'4'32'$,
$w_1 = 1423'1'4'2'3$,
$w_2 = 31243'1'2'4'$,
$w_3 = 32143'2'1'4'$,
$w_4 = 23142'3'1'4'$,
$w_5 = 21342'1'3'4'$,
$w_6 = 12341'2'3'4'$;
moves are admissible, as expected.}
\end{example}

 As discussed above,
the set of all admissible words labels
the set of all connected components of $\C_3$ (or of $\Omega_n$).
The connected components are separated by codimension 1 walls in
$\C_2 \smallsetminus \C_3$.
Admissible moves correspond to crossing walls between connected components
following flag-convex curves. 
% (i.e., curves of the form
% $(\rho_\Ff(t))_{[n-1,n]}$ for some convex projective curve $\rho$). 

\begin{remark}
We explained above that   when a (locally) convex curve
intersects the divisor $H_L$ where
$L=\operatorname{span}\langle e_1,\dots,e_{n-2}\rangle \subset \bR^n$,  
the respective admissible configuration of labelled points on $\Ss^1$
is acted upon by an admissible move
which either interchanges the relative order of the  points
labelled $1$  and $2$ or  the points $1$ and $2'$. 
\end{remark}

%}
%Razreshennye operacii sootvetstvuyut dvizheniyu v polozhitelnom napravlenii vdol' polozhitelnoj krivoj.

%Peresechenie c divisorom $det(1,2)=0$ sootvetstvuet operaciyam menyayuschim znak minora $( 1 2 )$

%Dlya kazhdogo slova w opredelim ego rank sleduyuschim sposobom:

%{\color{blue}
For an admissible cyclic word $w$ and any  of its two distinct entries 
$a$ and $b$ (belonging to  $\{1,\dots,n,1',\dots,n'\}$),
we denote by 
$\llbracket a,b \rrbracket$ the shortest closed arc in $\Ss^1$
starting at the point labeled $a$ and ending at the point labeled $b$.
In other words,   of two possible arcs connecting $a$ and $b$,
we choose the one whose length does not exceed half a turn
(or $(n-1)$ positions in the word $w$).
%\smallskip

We call an arc $\llbracket a,b \rrbracket$
\emph{(increasing) decreasing} if
$a, b \in \{1, \ldots, n\}$,
$a<b$ and
$b$ is obtained from $a$ by a rotation
of less that a half-turn in the (counter-)clockwise direction.
Given  $i<j$, we say that an admissible cyclic word $w$
\emph{contains a monotone subsequence}
$\llbracket i\ldots j\rrbracket = \llbracket i,i+1,i+2,\dots,j\rrbracket$  
if, for $i\le k\le j-1$, 
all of the arcs $\llbracket k,k+1\rrbracket$  
are either simultaneously increasing or simultaneously decreasing. 
A monotone sequence $\llbracket i\ldots j\rrbracket$ can be interpreted
as an immersed arc $I \to \Ss^1$, $I = [i,j]$,
taking $i \in I$ to the point of $\Ss^1$ labeled $i$ and
taking $j \in I$ to the point labeled $j$;
notice that, unlike the subarcs $\llbracket k, k+1\rrbracket$,
such an immersed arc can be (much) longer than a half-turn.
The  \emph{content} of an immersed arc in $\Ss^1$
is the number of complete half-turns contained in the arc;
we denote the content of a monotone sequence
$\llbracket i\ldots j\rrbracket$
by $\Cont(\llbracket i\ldots j\rrbracket)$.

We call a monotone subsequence $\llbracket i\ldots j\rrbracket$ \emph{maximal}
if neither $\llbracket i-1\ldots j\rrbracket$ for $i>1$
nor $\llbracket i\ldots j+1\rrbracket$ for $j<n$ is monotone.
An admissible word $w$ can be interpreted as the concatenation
of its maximal monotone arcs
$\llbracket 1\ldots k_1 \rrbracket$,
$\llbracket k_1\ldots k_2 \rrbracket$, \dots,
$\llbracket k_{s-1} \ldots n \rrbracket$;
here $k_0 = 1$, $k_s = n$ and
$s$ is the total number of maximal monotone subsequences in $w$.
The \emph{total content} $\Cont(w)$ of an admissible word $w$ is
$\Cont(w) = \Cont(\llbracket 1\ldots k_1 \rrbracket) + \cdots +
\Cont(\llbracket k_{s-1} \ldots n \rrbracket)$,
the sum of the contents of all maximal monotone subsequences of $w$. 

\begin{definition}\label{def:rank}
We define $\rk(w)$, the \emph{rank} of the admissible word $w$, by
\[ \rk(w) = 2 \Cont(w) + s - 1. \]
For any matrix $X\in\C_3$, we define its rank $\rk(X)=\rk(w(X))$.
\end{definition}

% In other words, if configuration $\omega\in\Omega_n$ is such that $\pi(\omega)=w$ the content of ${\bf seq}$ is the number of complete half-circles subtended by the union of the intervals $\cup_{k\in[i,j-1]} [k,k+1]$ in $\Ss^1$.  
%$w=\dots a\dots b\dots c\dots$ \\
%For a cyclic word $w$ of alphabet $1\dots,n,1'\dots,n'$ and any number $i$, $2\le i\le n-1$ we define its first contribution 
%$$C'_i=\begin{cases}
%2, & \text{ if $w$ contains cyclic pattern } [i-1, (i+1)',  i,  (i-1)' ,i+1,i']\\
% &\hfill \text{ or } [i-1,i',i+1,(i-1)',i,(i+1)']\\
%0, & \text{ otherwise.}
%\end{cases}
%$$
% 
% For any number $i$, $2\le i\le n-1$ we define its second contribution 
%$$C_i=\begin{cases}
%1, & \text{ if interval $[i-1,i]$ contains $i+1$ or interval $[i,i+1]$ contains $i-1$} \\
%0, & \text{ otherwise.}
%\end{cases}
%$$
% 
%Finally, given a cyclic word $w$, we define its rank $rk(w)=\sum_{i=2}^{n-1} (C'_i+C_i)$.

% \begin{remark} 
% {\rm Using the above projection $\nu: \C_2 \to (\Ss^1)^n$ and Definition~\ref{def10}, we can define the  rank for any $2\times n$-matrix $X \in \C_2$.
% % whose columns are non-proportional, i.e. whose
% % corresponding collection of points on $(\Ss^1)^n$ belong to $\Omega_n$. 
% We will use this notion of the rank of a $2\times n$-matrix later.} 
% \end{remark} 

\begin{example} {Consider the following cyclic words: 
\begin{enumerate}
\item for $w_1=123451'2'3'4'5'$,  we get $s=1, \Cont(w)=0$, and $\rk(w)=0$; 
\item for  $w_2=15'43'21'54'32'$,  we get $s=1, \Cont(w)=3$, and $\rk(w)=6$; 
\item in  $w_3=145231'4'5'2'3'$  there are $s = 3$ maximal monotone subsequences:
$\llbracket 1,2,3\rrbracket$,
$\llbracket 3,4\rrbracket$ and $\llbracket 4,5\rrbracket$.
Hence, $\Cont(w)=0$, and $\rk(w)=2$;
\item  for  $w_4=415234'1'5'2'3'$, one gets  $s=3$, $\Cont(w)=0$ and $\rk(w)=2$.
\end{enumerate}
}
\end{example}
\begin{remark} {\rm The word $w_1$ is totally positive,
while $w_2$ is  totally negative.
The word $w_4$ is obtained from $w_3$ by one admissible move
which shifts  $1$ closer to $2$;
$w_3$ can be obtained from $w_4$ by an admissible move
which shifts $4$ closer to $5$. 
Observe that $\rk(w_3)=\rk(w_4)$. }
\end{remark}

\begin{lemma}
\label{lm:rankstep}
Fix $n$ so that admissible words $w \in \fW^{+}$ have length $2n$.
For the totally positive word $w_{+} \in \fW^{+}$,
we have $\rk(w_{+}) = 0$.
For the totally negative word $w_{-} \in \fW^{+}$,
we have $\rk(w_{-}) = 2(n-2)$.
For any other admissible cyclic word
$w \in \fW^{+} \smallsetminus \{w_{+},w_{-}\}$, we have
$0 < \rk(w) < 2(n-2)$.
Furthermore, $m_{\{1,2\}}(w) > 0$ if and only if $\rk(w)$ is even.
\end{lemma}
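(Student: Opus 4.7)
The plan is to handle the four assertions of the lemma in turn: the evaluations at $w_{+}$ and $w_{-}$, the strict inequalities for other words, and the parity statement. The direct computations are immediate: in $w_{+}$, every arc $\llbracket k, k+1\rrbracket$ has length $1$ and is counter-clockwise, giving $s = 1$, $\Cont = 0$, hence $\rk(w_{+}) = 0$; while in $w_{-}$ one checks from the construction that the positive labels sit at $p_i \equiv (i-1)(n-1) \pmod{2n}$, so every arc is counter-clockwise of length $n-1$, yielding $s = 1$ and $\Cont(w_{-}) = \lfloor (n-1)^2/n\rfloor = n - 2$.

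For the upper bound, I would bound the content of any maximal monotone subsequence with $r$ arcs by $\lfloor r(n-1)/n\rfloor = r - 1$ (valid since $1 \leq r \leq n - 1$). Summing across the $s$ subsequences (whose arc counts partition $n-1$) gives $\Cont(w) \leq (n - 1) - s$ and hence
\[ \rk(w) = 2\Cont(w) + s - 1 \leq 2n - 3 - s \leq 2(n-2), \]
with the last inequality strict whenever $s \geq 2$. Equality $\rk(w) = 2(n-2)$ thus forces $s = 1$ and $\Cont = n-2$; combined with $L \leq (n-1)^2$, this leaves only $L \in \{(n-1)^2, n^2 - 2n\}$. The second option is ruled out by a modular clash: when $n$ is even, $L \equiv 0 \pmod{2n}$ forces $p_n \equiv p_1$; when $n$ is odd, $L \equiv n \pmod{2n}$ forces $p_n$ to coincide with the position of $1'$; both violate admissibility. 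So $L = (n-1)^2$, every arc has length $n-1$, and the unique configuration in $\fW^{+}$ is $w_{-}$ (its clockwise twin violates $m_{\{n-1,n\}} > 0$). Symmetrically, $\rk(w) = 0$ forces $s = 1$ and $\Cont = 0$, whence $L = n - 1$ with every arc of length $1$, producing $w_{+}$.

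For the parity statement, note that $\rk(w) \equiv s - 1 \pmod{2}$, while $s - 1$ counts the direction reversals among the consecutive arcs $\llbracket 1, 2\rrbracket, \ldots, \llbracket n-1, n\rrbracket$. Hence $s$ is odd if and only if the first and last arcs share a common direction; since $w \in \fW^{+}$ forces the last arc $\llbracket n-1, n\rrbracket$ to be increasing, this is equivalent to $\llbracket 1, 2\rrbracket$ being increasing, which by the characterization $m_{\{i,j\}}(w) > 0 \iff p_j - p_i \in \{1, \ldots, n-1\}$ modulo $2n$ is equivalent to $m_{\{1,2\}}(w) > 0$. The main technical obstacle I foresee is the modular collision argument excluding $L = n^2 - 2n$: it is what separates $w_{-}$ from the nearest near-extremal candidate and requires treating even and odd $n$ separately while carefully invoking the admissibility constraint.
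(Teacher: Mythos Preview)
Your argument is correct and takes a genuinely different route from the paper's. The paper proceeds by induction on $n$: given $w_0 \in \fW_n^{+}$, it deletes the labels $1,1'$ (shifting the remaining labels down) to obtain $w_1 \in \fW_{n-1}^{+}$, and shows that $\rk(w_0) - \rk(w_1) \in \{0,1,2\}$ according to whether the first monotone subsequence and its content change. All four assertions then fall out of this single recursive relation and the trivial base cases.

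Your proof is instead direct. You bound the content of each maximal monotone block by $r-1$ via the arc--length estimate $\ell_i \le n-1$, sum to get $\rk(w) \le 2n-3-s$, and then characterise the equality cases by the total--length constraint $L \in \{(n-1)^2, n(n-2)\}$, eliminating $L = n(n-2)$ with the modular collision (which, as you note, does require the even/odd split but is otherwise routine). The lower bound and the parity claim you handle by equally direct counting, the latter reducing to the observation that $s-1$ counts direction reversals and $\fW^{+}$ pins the direction of the final arc $\llbracket n-1,n\rrbracket$.

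What each approach buys: the paper's induction is short once the recursive step is stated, and it meshes with the label--deletion manoeuvre already used in Lemma~\ref{lm:contract}. Your argument is more self--contained and makes the extremal structure completely explicit: one sees immediately that $w_{-}$ is the unique maximiser because it is the only way to pack $n-1$ arcs of the maximal length $n-1$ into a single monotone run without a positional collision. The modular argument is a nice touch; it is the one place where your proof is genuinely more delicate than the paper's, but it goes through as you outline.
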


\begin{proof}
The proof is by induction on $n$; the cases $n \le 3$ are easy.

Given $w_0 \in \fW_n^{+}$, let $w_1 \in \fW_{n-1}^{+}$
be obtained by removing $1$ and $1'$ and by decreasing by $1$
the remaining labels (as in the proof of Lemma \ref{lm:contract}).
Let $\llbracket 1\ldots k_1\rrbracket$ be the first
maximal monotonic subarc for $w_0$.
Notice that $k_1 = 2$ if and only if $m_{\{1,2\}}(w_0) \ne m_{\{2,3\}}(w_0)$
(recall that $m_Y(w) \in \{\pm 1\}$).
On the other hand, if $m_{\{1,2\}}(w_0) = m_{\{2,3\}}(w_0)$
then the first maximal monotonic arc for the word $w_1$ is 
$\llbracket 1\ldots (k_1-1)\rrbracket$.
Let $c_0 = \Cont(\llbracket 1\ldots k_1\rrbracket)$
and $c_1 = \Cont(\llbracket 2\ldots k_1\rrbracket)$
(both for $w_0$).
Notice that
$c_1 = \Cont(\llbracket 1\ldots (k_1-1)\rrbracket)$ for $w_1$.
We have either $c_0 = c_1$
(if, for $w_0$,
the points labeled $k_1$ and $k_1'$ are both outside the arc
$\llbracket 1, 2\rrbracket$)
or $c_0 = c_1 + 1$ (otherwise).
We thus have
\[ \rk(w_0) = \rk(w_1) +
\begin{cases}
0, & m_{\{1,2\}}(w_0) = m_{\{2,3\}}(w_0), c_0 = c_1, \\
1, & m_{\{1,2\}}(w_0) \ne m_{\{2,3\}}(w_0), \\
2, & m_{\{1,2\}}(w_0) = m_{\{2,3\}}(w_0), c_0 = c_1 + 1;
\end{cases} \]
this provides us with the desired induction step.
\end{proof}

The next statement is the most important technical step
in our proof of Theorem~\ref{th:main}.
The argument is simple but a little long, and is done case by case;
it is presented and illustrated by the series of ten figures shown below. 

\medskip
\noindent
\begin{proposition}\label{prop:rank_move}
Consider $w_0, w_1 \in \fW_n^{+}$.
Assume that an admissible move takes $w_0$ to $w_1$:
then $\rk(w_1) \le \rk(w_0)$.
Furthermore, if $m_{\{1,2\}}(w_1) \ne m_{\{1,2\}}(w_0)$
then $\rk(w_1) < \rk(w_0)$.
\end{proposition}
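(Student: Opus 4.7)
The plan is to trace how each ingredient of $\rk(w) = 2\Cont(w) + s - 1$ changes under a single admissible move. The key structural observation is that an admissible move performs a double transposition: $k-1$ is swapped with an adjacent label $\alpha \in \{j, j'\}$, and, by antipodal symmetry, $(k-1)'$ is swapped with $-\alpha$. Consequently, the only plain labels whose positions on $\Ss^1$ change are $k-1$ and $j$, and the only arcs $\llbracket m, m+1\rrbracket$ that can change length or direction are those incident to these two labels: $\llbracket k-2, k-1\rrbracket$ and $\llbracket k-1, k\rrbracket$, together with $\llbracket j-1, j\rrbracket$ and $\llbracket j, j+1\rrbracket$ (whichever actually exist). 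All other maximal monotone subsequences are unaffected, so the computation of $\Delta \rk$ is entirely local. Crucially, the pattern conditions of Definition \ref{def:admissible} force $k-1$ to move one step \emph{toward} $k$ along the short arc $\llbracket k-1, k\rrbracket$; thus that arc always shortens by exactly one, and the direction of $j$'s motion is then determined (opposite to $k-1$'s).

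I would partition the analysis according to the identity of $j$: (I) the generic case $j \notin \{k-2, k\}$, in which no affected arc can flip short direction, since the moving labels cross none of $k-2$, $(k-2)'$, $k$, or $k'$; (II) the case $j = k-2$, in which the short direction of $\llbracket k-2, k-1\rrbracket$ can flip; and (III) the case $j = k$, in which the short direction of $\llbracket k-1, k\rrbracket$ can flip.

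In case (I), since no arc direction flips, $\Delta s = 0$ and $\Delta \rk = 2 \Delta \Cont$ depends only on how the four affected arcs change length. When $k-1$ is not a rotational extremum (so its two arcs lie in a common monotone subsequence), its motion changes one of its arc lengths by $+1$ and the other by $-1$, preserving the total length of the containing subsequence; when it is an extremum, admissibility forces both of its arcs to shorten, so each containing subsequence can only lose length. The analysis for $j$'s two arcs is analogous, and one verifies, using the elementary bound that any single arc has length at most $n-1$ and the restrictions imposed by admissibility on which configurations actually arise, that $\Delta \Cont \le 0$ in every sub-case. In cases (II) and (III), the direction flip can merge or split monotone subsequences, giving $\Delta s \in \{-2, 0, +2\}$. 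I would enumerate the configurations by (i) whether $\alpha$ is the plain $j$ or the primed $j'$ and (ii) the direction of the other arc incident at the flipping vertex; these are exactly the ten configurations illustrated in the figures following the proposition. In each one a direct calculation of $\Delta \Cont$, taking careful account of the half-turn boundaries absorbed or released by the subsequence rearrangement, confirms $2\Delta\Cont + \Delta s \le 0$.

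The strict-inequality clause follows immediately from Lemma \ref{lm:rankstep}, which states that $m_{\{1,2\}}(w) > 0$ if and only if $\rk(w)$ is even. The hypothesis $m_{\{1,2\}}(w_1) \ne m_{\{1,2\}}(w_0)$ therefore forces $\rk(w_0)$ and $\rk(w_1)$ to have opposite parities; combined with the weak inequality just established, this forces $\rk(w_1) < \rk(w_0)$. The main obstacle in the argument is the patient enumeration in cases (II) and (III): no individual sub-case is difficult, but they must be cataloged so that no configuration is missed, which is precisely the organizational role played by the accompanying figures.
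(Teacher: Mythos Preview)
Your overall strategy---a case analysis tracking $\Delta\Cont$ and $\Delta s$ under a single admissible move, with the strict inequality deduced from the parity statement in Lemma~\ref{lm:rankstep}---is exactly the paper's approach. The paper's proof is likewise a patient enumeration (its ten figures), and your derivation of the strict clause from parity matches the paper's remark immediately after the proposition.

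That said, your case partition does not line up with the paper's, and it contains a slip. By Definition~\ref{def:admissible} the label $j$ (or $j'$) that $k-1$ passes lies strictly between $k-1$ and $k$ on the short arc, so $j=k$ is impossible: your Case~(III) is vacuous, and the only overlap among the four arcs $\llbracket k-2,k-1\rrbracket$, $\llbracket k-1,k\rrbracket$, $\llbracket j-1,j\rrbracket$, $\llbracket j,j+1\rrbracket$ occurs when $j=k-2$. Correspondingly, the ten configurations in the paper's figures do \emph{not} all fall into your Cases~(II)/(III); several (the paper's Types~Ia, Ib, IIIa, IVa, and Types~IIa, IIb for $i>2$) belong to your generic Case~(I). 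The paper's own organization instead singles out the endpoint label~$1$ (which has only one incident arc $\llbracket 1,2\rrbracket$) and the special collision $j=i-1$, subdivided by the position of $i-2$. One further caution in your Case~(I): the sentence ``the analysis for $j$'s two arcs is analogous'' hides a genuine asymmetry, since $k-1$ is forced to move toward $k$ but $j$ carries no analogous directional constraint, so when $j$ is an extremum both of its arcs can lengthen; the needed compensation comes from the arcs at $k-1$, and this is where the actual checking happens. None of this breaks your plan, but you should repair the partition (drop Case~(III), treat the endpoint labels $1$ and $n$ separately) before carrying out the enumeration.
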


Notice that the last claim follows from the first claim
together with the parity remark in Lemma \ref{lm:rankstep}.

%I THINK ONE SHOULD REMIND THE READER USING LEMMA 2.3 WHAT CHANGING OF THE MINOR $m_{\{1,2\}}$ MEANS!!! THIS WILL ALSO EXPLAIN WHY THE 10 PICTURES BELOW ARE EXHAUSTIVE! 

\setstcolor{red}

\begin{proof}

Below we present all possible  types of elementary moves and,  for each of them,  we analyze what happens with the rank of $w$. 
Observe that during the evolution of the point configuration in $(\Ss^1)^n$
following some curve $\Gamma_2$
% $(\rho_\Ff)_{[n-1,n]}(t)$
the rank of configuration  does not change until two or more configuration points collide. 
We consider admissible moves and list all cases when a moving point labelled $i$ collides with one of the remaining points labelled $j$ or $j^\prime$.
Detailed consideration of all possible cases led us to their subdivision into the following types of collisions. (This subdivision is an artifact of our proof). 
%\st{and it might very well  happen that there exists an alternative  subdivision of collisions  into subcases leading to a different proof of the Proposition}~\ref{prop:rank_move}.
%{\color{red} Maybe delete the second the half of the sentence?}
%) 

\medskip
\noindent
Type Ia:  the moving point $1$ collides with the  point $k$, $k>2$;

\noindent
Type Ib: the moving point $1$ collides with the point $k'$, $k>2$; 

\noindent
Type IIa: the moving point $i$, $i>1$ collides with the point $1$; 

\noindent
Type IIb:  the moving point $i$, $i>1$ collides with the point $1'$;
 
\noindent
%{\color{red} 
Type IIIa: the moving point $i$ collides with $j$ when both $i,j>1$, $j\ne i-1$; 

If $j=i-1$  the case needs to be subdivided into two subcases by the location of  point $i-2$  in one of the following two intervals:
 
 \noindent
Type IIIb: the moving point $i$ collides with $j=i-1$, $i>1$ and the point $i-2$ belongs to the shortest arc between $(i-1)'$ and $i$; 

\noindent
 Type IIIc: the moving point $i$ collides with $j=i-1$, $i>1$ and the point $i-2$ belongs to the shortest arc between $i-1$ and $i'$. 

\noindent
 Type IVa: the moving point $i$ collides with $j'$, $i,j>1$, $j\ne i-1$; 

If $j=i-1$ then case  needs to be subdivided into two subcases also by the location of point $i-2$  in one of the following two intervals:
 
  \noindent
Type IVb: the moving point $i$ collides with $j=i-1$, $i>1$ and the point $i-2$ belongs to the shortest arc between $(i-1)$ and $i$;
 
  \noindent
 Type IVc: the moving point $i$ collides with $j=i-1$, $i>1$ and the point $i-2$ belongs to the shortest arc between $(i-1)'$ and $i'$.  %}

The above types exhaust all possible situations of collision and we discuss below what happens with the rank function under these collisions. 

\medskip
\definecolor{xdxdff}{rgb}{0.49019607843137253,0.49019607843137253,1.}
\definecolor{zzttqq}{rgb}{0.6,0.2,0.}
\definecolor{qqqqff}{rgb}{0.,0.,1.}
\definecolor{uuuuuu}{rgb}{0.26666666666666666,0.26666666666666666,0.26666666666666666}

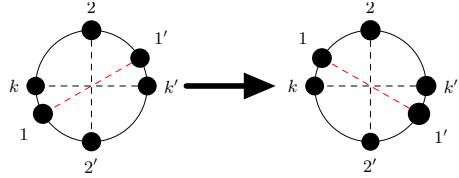
\begin{figure}[H]
	\centering
	\resizebox{\MyFig}{!}{
\begin{tikzpicture}[line cap=round,line join=round,>=triangle 45,x=1.0cm,y=1.0cm]
\clip(0,0) rectangle (10.,3.5);
\draw(2.5,2.) circle (1cm);
\node[circle,fill=black,inner sep=0pt,minimum size=3pt,label=above:{\small{$2$}}] (a) at (2.5 cm,3 cm) {A};
\node[circle,fill=black,inner sep=0pt,minimum size=3pt,label=below:{\small{$2'$}}] (b) at (2.5 cm,1 cm) {B};
\draw[dashed] (a) - -  (b);
\node[circle,fill=black,inner sep=0pt,minimum size=3pt,label=above right:{\small{$1'$}}] (c) at (3.37 cm,2.5 cm) {C};
\node[circle,fill=black,inner sep=0pt,minimum size=3pt,label=below left:{\small{$1$}}] (d) at (1.63 cm,1.5 cm) {D};
\draw[red,dashed] (c) - -  (d);
\node[circle,fill=black,inner sep=0pt,minimum size=3pt,label=right:{\small{$k'$}}] (e) at (3.5 cm,2 cm) {E};
\node[circle,fill=black,inner sep=0pt,minimum size=3pt,label=left:{\small{$k$}}] (f) at (1.5 cm,2 cm) {F};
\draw[dashed] (e) - -  (f);
\draw(7.5,2.) circle (1cm);
\node[circle,fill=black,inner sep=0pt,minimum size=3pt,label=above:{\small{$2$}}] (k) at (7.5 cm,3 cm) {K};
\node[circle,fill=black,inner sep=0pt,minimum size=3pt,label=below:{\small{$2'$}}] (l) at (7.5 cm,1 cm) {L};
\draw[dashed] (k) - -  (l);
\node[circle,fill=black,inner sep=0pt,minimum size=3pt,label=below right:{\small{$1'$}}] (m) at (8.37 cm,1.5 cm) {M};
\node[circle,fill=black,inner sep=0pt,minimum size=3pt,label=above left:{\small{$1$}}] (n) at (6.63 cm,2.5 cm) {N};
\draw[red,dashed] (m) - -  (n);
\node[circle,fill=black,inner sep=0pt,minimum size=3pt,label=right:{\small{$k'$}}] (o) at (8.5 cm,2 cm) {O};
\node[circle,fill=black,inner sep=0pt,minimum size=3pt,label=left:{\small{$k$}}] (p) at (6.5 cm,2 cm) {P};
\draw[dashed] (o) - -  (p);
%\draw [->,>=stealth] (4 cm, 2 cm) -- (5.7 cm, 2 cm);
%\draw[dashed] (4.2 cm, 2 cm) -- (5.8 cm, 2 cm);
\draw[->, line width=1mm](4.2 cm, 2 cm) -- (5.8 cm, 2 cm);
\end{tikzpicture}
}
\caption{Elementary admissible move of type Ia with $k>2$.}
\end{figure}

\vskip-0.3cm
{\small  The move changes the relative order of  points $1$ and $k$ in cyclic word $w$. Rank $\rk(w)$ 
 does not change.}

\begin{figure}[H]
	\centering
	\resizebox{\MyFig}{!}{
\begin{tikzpicture}[line cap=round,line join=round,>=triangle 45,x=1.0cm,y=1.0cm]
\clip(0,0) rectangle (10.,3.5);

\draw(2.5,2.) circle (1cm);
\node[circle,fill=black,inner sep=0pt,minimum size=3pt,label=above:{\small{$2$}}] (a) at (2.5 cm,3 cm) {A};
\node[circle,fill=black,inner sep=0pt,minimum size=3pt,label=below:{\small{$2'$}}] (b) at (2.5 cm,1 cm) {B};
\draw[dashed] (a) - -  (b);
\node[circle,fill=black,inner sep=0pt,minimum size=3pt,label=above right:{\small{$1'$}}] (c) at (3.37 cm,2.5 cm) {C};
\node[circle,fill=black,inner sep=0pt,minimum size=3pt,label=below left:{\small{$1$}}] (d) at (1.63 cm,1.5 cm) {D};
\draw[red,dashed] (c) - -  (d);
\node[circle,fill=black,inner sep=0pt,minimum size=3pt,label=right:{\small{$k$}}] (e) at (3.5 cm,2 cm) {E};
\node[circle,fill=black,inner sep=0pt,minimum size=3pt,label=left:{\small{$k'$}}] (f) at (1.5 cm,2 cm) {F};
\draw[dashed] (e) - -  (f);

\draw(7.5,2.) circle (1cm);
\node[circle,fill=black,inner sep=0pt,minimum size=3pt,label=above:{\small{$2$}}] (k) at (7.5 cm,3 cm) {K};
\node[circle,fill=black,inner sep=0pt,minimum size=3pt,label=below:{\small{$2'$}}] (l) at (7.5 cm,1 cm) {L};
\draw[dashed] (k) - -  (l);
\node[circle,fill=black,inner sep=0pt,minimum size=3pt,label=below right:{\small{$1'$}}] (m) at (8.37 cm,1.5 cm) {M};
\node[circle,fill=black,inner sep=0pt,minimum size=3pt,label=above left:{\small{$1$}}] (n) at (6.63 cm,2.5 cm) {N};
\draw[red,dashed] (m) - -  (n);
\node[circle,fill=black,inner sep=0pt,minimum size=3pt,label=right:{\small{$k$}}] (o) at (8.5 cm,2 cm) {O};
\node[circle,fill=black,inner sep=0pt,minimum size=3pt,label=left:{\small{$k'$}}] (p) at (6.5 cm,2 cm) {P};
\draw[dashed] (o) - -  (p);

%\draw [->,>=stealth] (4 cm, 2 cm) -- (6 cm, 2 cm);
%\draw[big arrow,orange] (4.2 cm, 2 cm) -- (5.8 cm, 2 cm);
\draw[->, line width=1mm](4.2 cm, 2 cm) -- (5.8 cm, 2 cm);

\end{tikzpicture}
}
\caption{Elementary admissible move of type Ib. }
\end{figure}
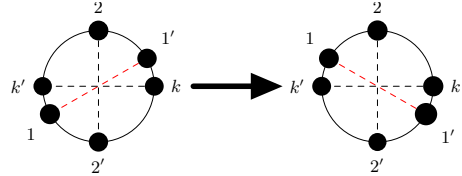
\vskip - 0.3cm
{\small The move changes the relative order of points $1$ and $k'$ in the word $w$. If the maximal element $k_1$ of the first maximal monotone subsequence $[1,2,\dots,k_1]$ is different from $k$,  then $\rk(w)$ does not change. If $k_1=k$, then $\rk(w)$ decreases by $1$.}

%\vskip-0.3cm

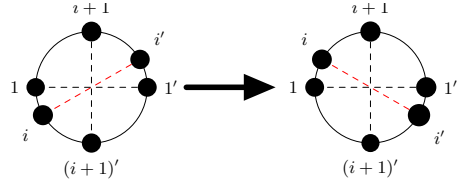
\begin{figure}[H]
	\centering
	\resizebox{\MyFig}{!}{
\begin{tikzpicture}[line cap=round,line join=round,>=triangle 45,x=1.0cm,y=1.0cm]
\clip(0,0) rectangle (10.,3.5);

\draw(2.5,2.) circle (1cm);
\node[circle,fill=black,inner sep=0pt,minimum size=3pt,label=above:{\small{$i+1$}}] (a) at (2.5 cm,3 cm) {A};
\node[circle,fill=black,inner sep=0pt,minimum size=3pt,label=below:{\small{$(i+1)'$}}] (b) at (2.5 cm,1 cm) {B};
\draw[dashed] (a) - -  (b);
\node[circle,fill=black,inner sep=0pt,minimum size=3pt,label=above right:{\small{$i'$}}] (c) at (3.37 cm,2.5 cm) {C};
\node[circle,fill=black,inner sep=0pt,minimum size=3pt,label=below left:{\small{$i$}}] (d) at (1.63 cm,1.5 cm) {D};
\draw[red,dashed] (c) - -  (d);
\node[circle,fill=black,inner sep=0pt,minimum size=3pt,label=right:{\small{$1'$}}] (e) at (3.5 cm,2 cm) {E};
\node[circle,fill=black,inner sep=0pt,minimum size=3pt,label=left:{\small{$1$}}] (f) at (1.5 cm,2 cm) {F};
\draw[dashed] (e) - -  (f);

\draw(7.5,2.) circle (1cm);
\node[circle,fill=black,inner sep=0pt,minimum size=3pt,label=above:{\small{$i+1$}}] (k) at (7.5 cm,3 cm) {K};
\node[circle,fill=black,inner sep=0pt,minimum size=3pt,label=below:{\small{$(i+1)'$}}] (l) at (7.5 cm,1 cm) {L};
\draw[dashed] (k) - -  (l);
\node[circle,fill=black,inner sep=0pt,minimum size=3pt,label=below right:{\small{$i'$}}] (m) at (8.37 cm,1.5 cm) {M};
\node[circle,fill=black,inner sep=0pt,minimum size=3pt,label=above left:{\small{$i$}}] (n) at (6.63 cm,2.5 cm) {N};
\draw[red,dashed] (m) - -  (n);
\node[circle,fill=black,inner sep=0pt,minimum size=3pt,label=right:{\small{$1'$}}] (o) at (8.5 cm,2 cm) {O};
\node[circle,fill=black,inner sep=0pt,minimum size=3pt,label=left:{\small{$1$}}] (p) at (6.5 cm,2 cm) {P};
\draw[dashed] (o) - -  (p);

%\draw [->,>=stealth] (4 cm, 2 cm) -- (6 cm, 2 cm);
%\draw[big arrow,orange] (4.2 cm, 2 cm) -- (5.9 cm, 2 cm);
\draw[->, line width=1mm](4.2 cm, 2 cm) -- (5.8 cm, 2 cm);

\end{tikzpicture}
}
\caption{Elementary admissible move of type IIa. }
\end{figure}
\vskip-0.3cm
{\small  If $i>2$ and the first monotone subsequence is ${\bf seq}_1=[1,2,\dots,k_1]$ with $k_1\ne i$, then $\rk(w)$ does not change.
If $i>2$ and $k_1=i$, then ${\bf seq}_1$ is increasing (since otherwise, $k_1\ge i+1$) and $\Cont(w)$ decreases by $1$, hence $\rk(w)$ drops by $2$.
Finally, if $i=2$, then $\rk(w)$ drops by $1$.}
%\vskip-0.3cm

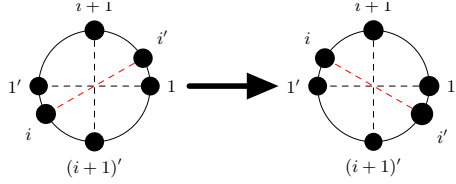
\begin{figure}[H]
	\centering
	\resizebox{\MyFig}{!}{
\begin{tikzpicture}[line cap=round,line join=round,>=triangle 45,x=1.0cm,y=1.0cm]
\clip(0,0) rectangle (10.,3.5);

\draw(2.5,2.) circle (1cm);
\node[circle,fill=black,inner sep=0pt,minimum size=3pt,label=above:{\small{$i+1$}}] (a) at (2.5 cm,3 cm) {A};
\node[circle,fill=black,inner sep=0pt,minimum size=3pt,label=below:{\small{$(i+1)'$}}] (b) at (2.5 cm,1 cm) {B};
\draw[dashed] (a) - -  (b);
\node[circle,fill=black,inner sep=0pt,minimum size=3pt,label=above right:{\small{$i'$}}] (c) at (3.37 cm,2.5 cm) {C};
\node[circle,fill=black,inner sep=0pt,minimum size=3pt,label=below left:{\small{$i$}}] (d) at (1.63 cm,1.5 cm) {D};
\draw[red,dashed] (c) - -  (d);
\node[circle,fill=black,inner sep=0pt,minimum size=3pt,label=right:{\small{$1$}}] (e) at (3.5 cm,2 cm) {E};
\node[circle,fill=black,inner sep=0pt,minimum size=3pt,label=left:{\small{$1'$}}] (f) at (1.5 cm,2 cm) {F};
\draw[dashed] (e) - -  (f);

\draw(7.5,2.) circle (1cm);
\node[circle,fill=black,inner sep=0pt,minimum size=3pt,label=above:{\small{$i+1$}}] (k) at (7.5 cm,3 cm) {K};
\node[circle,fill=black,inner sep=0pt,minimum size=3pt,label=below:{\small{$(i+1)'$}}] (l) at (7.5 cm,1 cm) {L};
\draw[dashed] (k) - -  (l);
\node[circle,fill=black,inner sep=0pt,minimum size=3pt,label=below right:{\small{$i'$}}] (m) at (8.37 cm,1.5 cm) {M};
\node[circle,fill=black,inner sep=0pt,minimum size=3pt,label=above left:{\small{$i$}}] (n) at (6.63 cm,2.5 cm) {N};
\draw[red,dashed] (m) - -  (n);
\node[circle,fill=black,inner sep=0pt,minimum size=3pt,label=right:{\small{$1$}}] (o) at (8.5 cm,2 cm) {O};
\node[circle,fill=black,inner sep=0pt,minimum size=3pt,label=left:{\small{$1'$}}] (p) at (6.5 cm,2 cm) {P};
\draw[dashed] (o) - -  (p);

%\draw [->,>=stealth] (4 cm, 2 cm) -- (6 cm, 2 cm);
%\draw[big arrow,orange] (4.2 cm, 2 cm) -- (5.9 cm, 2 cm);
\draw[->, line width=1mm](4.2 cm, 2 cm) -- (5.8 cm, 2 cm);
\end{tikzpicture}
}
\caption{Elementary admissible move of type IIb. }
\end{figure}
\vskip-0.3cm

{\small As in type III, if $i>2$ and the first monotone subsequence is ${\bf seq}_1=[1,2,\dots,k_1]$ with $k_1\ne i$, then $\rk(w)$ does not change.
If $i>2$ and $k_1=i$,  then ${\bf seq}_1$ is increasing (since otherwise, $k_1\ge i+1$) and $\rk(w)$ drops by $2$.
Finally, if $i=2$, then $\rk(w)$ drops by $1$.}

\vskip-0.3cm

\begin{figure}[H]
	\centering
	\resizebox{\MyFig}{!}{
\begin{tikzpicture}[line cap=round,line join=round,>=triangle 45,x=1.0cm,y=1.0cm]
\clip(0,0) rectangle (10.,3.5);

\draw(2.5,2.) circle (1cm);
\node[circle,fill=black,inner sep=0pt,minimum size=3pt,label=above:{\small{$i+1$}}] (a) at (2.5 cm,3 cm) {A};
\node[circle,fill=black,inner sep=0pt,minimum size=3pt,label=below:{\small{$(i+1)'$}}] (b) at (2.5 cm,1 cm) {B};
\draw[dashed] (a) - -  (b);
\node[circle,fill=black,inner sep=0pt,minimum size=3pt,label=above right:{\small{$i'$}}] (c) at (3.37 cm,2.5 cm) {C};
\node[circle,fill=black,inner sep=0pt,minimum size=3pt,label=below left:{\small{$i$}}] (d) at (1.63 cm,1.5 cm) {D};
\draw[red,dashed] (c) - -  (d);
\node[circle,fill=black,inner sep=0pt,minimum size=3pt,label=right:{\small{$j'$}}] (e) at (3.5 cm,2 cm) {E};
\node[circle,fill=black,inner sep=0pt,minimum size=3pt,label=left:{\small{$j$}}] (f) at (1.5 cm,2 cm) {F};
\draw[dashed] (e) - -  (f);

\draw(7.5,2.) circle (1cm);
\node[circle,fill=black,inner sep=0pt,minimum size=3pt,label=above:{\small{$i+1$}}] (k) at (7.5 cm,3 cm) {K};
\node[circle,fill=black,inner sep=0pt,minimum size=3pt,label=below:{\small{$(i+1)'$}}] (l) at (7.5 cm,1 cm) {L};
\draw[dashed] (k) - -  (l);
\node[circle,fill=black,inner sep=0pt,minimum size=3pt,label=below right:{\small{$i'$}}] (m) at (8.37 cm,1.5 cm) {M};
\node[circle,fill=black,inner sep=0pt,minimum size=3pt,label=above left:{\small{$i$}}] (n) at (6.63 cm,2.5 cm) {N};
\draw[red,dashed] (m) - -  (n);
\node[circle,fill=black,inner sep=0pt,minimum size=3pt,label=right:{\small{$j'$}}] (o) at (8.5 cm,2 cm) {O};
\node[circle,fill=black,inner sep=0pt,minimum size=3pt,label=left:{\small{$j$}}] (p) at (6.5 cm,2 cm) {P};
\draw[dashed] (o) - -  (p);

%\draw [->,>=stealth] (4 cm, 2 cm) -- (6 cm, 2 cm);
%\draw[big arrow,orange] (4.2 cm, 2 cm) -- (5.9 cm, 2 cm);
\draw[->, line width=1mm](4.2 cm, 2 cm) -- (5.8 cm, 2 cm);
\end{tikzpicture}
}
\caption{Elementary admissible move of type IIIa with $i,j>1$ and $j\notin \{i,i+1\}$.}
\end{figure}
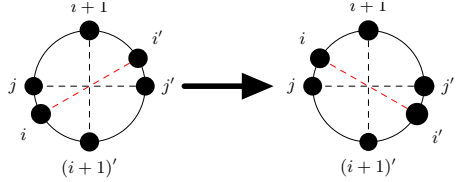
\vskip-0.5cm
{\small 
In this case $\rk(w)$ can only change  if  either $i<j$ and there exists an increasing maximal monotone subsequence $[i,i+1,\dots,j]$ or if $i>j$  and there exists a decreasing maximal monotone subsequence $[j,j+1,\dots,i]$. In both cases, $\Cont(w)$ decreases by $1$ and $\rk(w)$ decreases by $2$. The remaining situation  $j=i-1$ is considered in detail below.}

We split  the case $j=i-1$ in  Figure~7 into several subcases according to the relative position of $i-2$. The point $i-2$ can be located either in the interval $((i-1)',i)$  or in $(i-1,i')$, as  below. 

\vskip-0.3cm

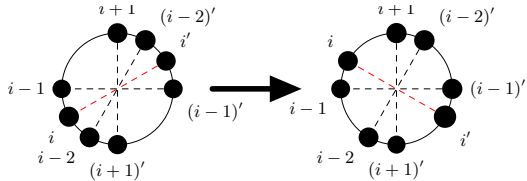
\begin{figure}[H]
	\centering
	\resizebox{\MyFig}{!}{
		\begin{tikzpicture}[line cap=round,line join=round,>=triangle 45,x=1.0cm,y=1.0cm]
		\clip(0,0) rectangle (10.,3.5);
		
		\draw(2.5,2.) circle (1cm);
		\node[circle,fill=black,inner sep=0pt,minimum size=3pt,label=above:{\small{$i+1$}}] (a) at (2.5 cm,3 cm) {A};
		\node[circle,fill=black,inner sep=0pt,minimum size=3pt,label=below:{\small{$(i+1)'$}}] (b) at (2.5 cm,1 cm) {B};
		\draw[dashed] (a) - -  (b);
		\node[circle,fill=black,inner sep=0pt,minimum size=3pt,label=above right:{\small{$i'$}}] (c) at (3.37 cm,2.5 cm) {C};
		\node[circle,fill=black,inner sep=0pt,minimum size=3pt,label=below left:{\small{$i$}}] (d) at (1.63 cm,1.5 cm) {D};
		\draw[red,dashed] (c) - -  (d);
		\node[circle,fill=black,inner sep=0pt,minimum size=3pt,label=below right:{\small{$(i-1)'$}}] (e) at (3.5 cm,2 cm) {E};
		\node[circle,fill=black,inner sep=0pt,minimum size=3pt,label=left:{\small{$i-1$}}] (f) at (1.5 cm,2 cm) {F};
		\draw[dashed] (e) - -  (f);
		\node[circle,fill=black,inner sep=0pt,minimum size=3pt,label=above right:{\small{$(i-2)'$}}] (g) at (3. cm,2.87 cm) {G};
        \node[circle,fill=black,inner sep=0pt,minimum size=3pt,label=below left:{\small{$i-2$}}] (h) at (2 cm,1.13 cm) {H};
        \draw[dashed] (g) - -  (h);
		
		\draw(7.5,2.) circle (1cm);
		\node[circle,fill=black,inner sep=0pt,minimum size=3pt,label=above:{\small{$i+1$}}] (k) at (7.5 cm,3 cm) {K};
		\node[circle,fill=black,inner sep=0pt,minimum size=3pt,label=below:{\small{$(i+1)'$}}] (l) at (7.5 cm,1 cm) {L};
		\draw[dashed] (k) - -  (l);
		\node[circle,fill=black,inner sep=0pt,minimum size=3pt,label=below right:{\small{$i'$}}] (m) at (8.37 cm,1.5 cm) {M};
		\node[circle,fill=black,inner sep=0pt,minimum size=3pt,label=above left:{\small{$i$}}] (n) at (6.63 cm,2.5 cm) {N};
		\draw[red,dashed] (m) - -  (n);
		\node[circle,fill=black,inner sep=0pt,minimum size=3pt,label=right:{\small{$(i-1)'$}}] (o) at (8.5 cm,2 cm) {O};
		\node[circle,fill=black,inner sep=0pt,minimum size=3pt,label=below left:{\small{$i-1$}}] (p) at (6.5 cm,2 cm) {P};
		\draw[dashed] (o) - -  (p);
		\node[circle,fill=black,inner sep=0pt,minimum size=3pt,label=above right:{\small{$(i-2)'$}}] (g) at (8. cm,2.87 cm) {G};
		\node[circle,fill=black,inner sep=0pt,minimum size=3pt,label=below left:{\small{$i-2$}}] (h) at (7 cm,1.13 cm) {H};
		\draw[dashed] (g) - -  (h);
%	\draw [->,>=stealth] (4.8 cm, 2 cm) -- (5.4 cm, 2 cm);
%\draw[big arrow,orange] (4.8 cm, 2 cm) -- (5.6 cm, 2 cm);
%\draw[big arrow,orange] (4.2 cm, 2 cm) -- (5.9 cm, 2 cm);
\draw[->, line width=1mm](4.2 cm, 2 cm) -- (5.8 cm, 2 cm);	
		\end{tikzpicture}
	}
	\caption{Elementary admissible move of type IIIb. }
\end{figure}
\vskip-0.3cm
{\small The point $i-2$ belongs to  $((i-1)',i)$. $\Cont(w)$ does not change. Both the number of maximal monotone subsequences and $\rk(w)$ decrease by $2$.}

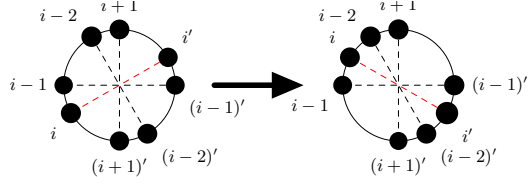
\begin{figure}[H]
	\centering
	\resizebox{\MyFig}{!}{
		\begin{tikzpicture}[line cap=round,line join=round,>=triangle 45,x=1.0cm,y=1.0cm]
		\clip(0,0) rectangle (10.,3.5);
		
		\draw(2.5,2.) circle (1cm);
		\node[circle,fill=black,inner sep=0pt,minimum size=3pt,label=above:{\small{$i+1$}}] (a) at (2.5 cm,3 cm) {A};
		\node[circle,fill=black,inner sep=0pt,minimum size=3pt,label=below:{\small{$(i+1)'$}}] (b) at (2.5 cm,1 cm) {B};
		\draw[dashed] (a) - -  (b);
		\node[circle,fill=black,inner sep=0pt,minimum size=3pt,label=above right:{\small{$i'$}}] (c) at (3.37 cm,2.5 cm) {C};
		\node[circle,fill=black,inner sep=0pt,minimum size=3pt,label=below left:{\small{$i$}}] (d) at (1.63 cm,1.5 cm) {D};
		\draw[red,dashed] (c) - -  (d);
		\node[circle,fill=black,inner sep=0pt,minimum size=3pt,label=below right:{\small{$(i-1)'$}}] (e) at (3.5 cm,2 cm) {E};
		\node[circle,fill=black,inner sep=0pt,minimum size=3pt,label=left:{\small{$i-1$}}] (f) at (1.5 cm,2 cm) {F};
		\draw[dashed] (e) - -  (f);
		\node[circle,fill=black,inner sep=0pt,minimum size=3pt,label=above left:{\small{$i-2$}}] (g) at (2. cm,2.87 cm) {G};
		\node[circle,fill=black,inner sep=0pt,minimum size=3pt,label=below right:{\small{$(i-2)'$}}] (h) at (3 cm,1.13 cm) {H};
		\draw[dashed] (g) - -  (h);
		
		\draw(7.5,2.) circle (1cm);
		\node[circle,fill=black,inner sep=0pt,minimum size=3pt,label=above:{\small{$i+1$}}] (k) at (7.5 cm,3 cm) {K};
		\node[circle,fill=black,inner sep=0pt,minimum size=3pt,label=below:{\small{$(i+1)'$}}] (l) at (7.5 cm,1 cm) {L};
		\draw[dashed] (k) - -  (l);
		\node[circle,fill=black,inner sep=0pt,minimum size=3pt,label=below right:{\small{$i'$}}] (m) at (8.37 cm,1.5 cm) {M};
		\node[circle,fill=black,inner sep=0pt,minimum size=3pt,label=above left:{\small{$i$}}] (n) at (6.63 cm,2.5 cm) {N};
		\draw[red,dashed] (m) - -  (n);
		\node[circle,fill=black,inner sep=0pt,minimum size=3pt,label=right:{\small{$(i-1)'$}}] (o) at (8.5 cm,2 cm) {O};
		\node[circle,fill=black,inner sep=0pt,minimum size=3pt,label=below left:{\small{$i-1$}}] (p) at (6.5 cm,2 cm) {P};
		\draw[dashed] (o) - -  (p);
		\node[circle,fill=black,inner sep=0pt,minimum size=3pt,label=above left:{\small{$i-2$}}] (g) at (7. cm,2.87 cm) {G};
		\node[circle,fill=black,inner sep=0pt,minimum size=3pt,label=below right:{\small{$(i-2)'$}}] (h) at (8 cm,1.13 cm) {H};
		\draw[dashed] (g) - -  (h);

%	\draw [->,>=stealth] (4.8 cm, 2 cm) -- (5.4 cm, 2 cm);
%\draw[big arrow,orange] (4.2 cm, 2 cm) -- (5.9 cm, 2 cm);
\draw[->, line width=1mm](4.2 cm, 2 cm) -- (5.8 cm, 2 cm);		
		\end{tikzpicture}
	}
	\caption{Elementary admissible move of type IIIc.}
\end{figure}
\vskip-0.3cm
{\small The point $i-2$ belongs to $(i-1,i')$. The  rank $\rk(w)$ does not change.}

%%%%%%%%%%%%%

%%%%%%%%%%%%%

Finally, 
\vskip-0.3cm
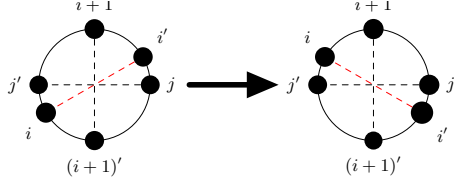
\begin{figure}[H]
	\centering
	\resizebox{\MyFig}{!}{
\begin{tikzpicture}[line cap=round,line join=round,>=triangle 45,x=1.0cm,y=1.0cm]
\clip(0,0) rectangle (10.,3.5);

\draw(2.5,2.) circle (1cm);
\node[circle,fill=black,inner sep=0pt,minimum size=3pt,label=above:{\small{$i+1$}}] (a) at (2.5 cm,3 cm) {A};
\node[circle,fill=black,inner sep=0pt,minimum size=3pt,label=below:{\small{$(i+1)'$}}] (b) at (2.5 cm,1 cm) {B};
\draw[dashed] (a) - -  (b);
\node[circle,fill=black,inner sep=0pt,minimum size=3pt,label=above right:{\small{$i'$}}] (c) at (3.37 cm,2.5 cm) {C};
\node[circle,fill=black,inner sep=0pt,minimum size=3pt,label=below left:{\small{$i$}}] (d) at (1.63 cm,1.5 cm) {D};
\draw[red,dashed] (c) - -  (d);
\node[circle,fill=black,inner sep=0pt,minimum size=3pt,label=right:{\small{$j$}}] (e) at (3.5 cm,2 cm) {E};
\node[circle,fill=black,inner sep=0pt,minimum size=3pt,label=left:{\small{$j'$}}] (f) at (1.5 cm,2 cm) {F};
\draw[dashed] (e) - -  (f);

\draw(7.5,2.) circle (1cm);
\node[circle,fill=black,inner sep=0pt,minimum size=3pt,label=above:{\small{$i+1$}}] (k) at (7.5 cm,3 cm) {K};
\node[circle,fill=black,inner sep=0pt,minimum size=3pt,label=below:{\small{$(i+1)'$}}] (l) at (7.5 cm,1 cm) {L};
\draw[dashed] (k) - -  (l);
\node[circle,fill=black,inner sep=0pt,minimum size=3pt,label=below right:{\small{$i'$}}] (m) at (8.37 cm,1.5 cm) {M};
\node[circle,fill=black,inner sep=0pt,minimum size=3pt,label=above left:{\small{$i$}}] (n) at (6.63 cm,2.5 cm) {N};
\draw[red,dashed] (m) - -  (n);
\node[circle,fill=black,inner sep=0pt,minimum size=3pt,label=right:{\small{$j$}}] (o) at (8.5 cm,2 cm) {O};
\node[circle,fill=black,inner sep=0pt,minimum size=3pt,label=left:{\small{$j'$}}] (p) at (6.5 cm,2 cm) {P};
\draw[dashed] (o) - -  (p);
%\draw[big arrow,orange] (4.2 cm, 2 cm) -- (5.9 cm, 2 cm);
\draw[->, line width=1mm](4.2 cm, 2 cm) -- (5.8 cm, 2 cm);
%\draw [->,>=stealth] (4 cm, 2 cm) -- (6 cm, 2 cm);

\end{tikzpicture}
}
\caption{Elementary admissible move of type IVa. }
\end{figure}
\vskip-0.3cm
{\small Here $i,j>1$, and $j$ is not in  $\{i,i+1\}$. If $j\ne i-1$, then $\rk(w)$ does not change unless either $i<j$ and  there exists a maximal decreasing subsequence $[i,i+1,\dots,j]$ or $i>j$ and there exists a maximal increasing subsequence $[j,j+1,\dots,i]$. In both cases $\Cont(w)$ decreases by $1$ and $\rk(w)$ decreases by $2$. 
The remaining sitiuation $j=i-1$ is considered in detail below.}

As above, we split  the case $j=i-1$ in the last figure into several subcases according to the relative position of $i-2$. The point $i-2$ can be located either in the interval $((i-1),i)$  or in the interval $((i-1)',i')$, as below.
\vskip-0.3cm

\begin{figure}[H]
	\centering
	\resizebox{\MyFig}{!}{
		\begin{tikzpicture}[line cap=round,line join=round,>=triangle 45,x=1.0cm,y=1.0cm]
		\clip(0,0) rectangle (10.,3.5);
		
		\draw(2.5,2.) circle (1cm);
		\node[circle,fill=black,inner sep=0pt,minimum size=3pt,label=above:{\small{$i+1$}}] (a) at (2.5 cm,3 cm) {A};
		\node[circle,fill=black,inner sep=0pt,minimum size=3pt,label=below:{\small{$(i+1)'$}}] (b) at (2.5 cm,1 cm) {B};
		\draw[dashed] (a) - -  (b);
		\node[circle,fill=black,inner sep=0pt,minimum size=3pt,label=above right:{\small{$i'$}}] (c) at (3.37 cm,2.5 cm) {C};
		\node[circle,fill=black,inner sep=0pt,minimum size=3pt,label=below left:{\small{$i$}}] (d) at (1.63 cm,1.5 cm) {D};
		\draw[red,dashed] (c) - -  (d);
		\node[circle,fill=black,inner sep=0pt,minimum size=3pt,label=below right:{\small{$i-1$}}] (e) at (3.5 cm,2 cm) {E};
		\node[circle,fill=black,inner sep=0pt,minimum size=3pt,label=left:{\small{$(i-1)'$}}] (f) at (1.5 cm,2 cm) {F};
		\draw[dashed] (e) - -  (f);
		\node[circle,fill=black,inner sep=0pt,minimum size=3pt,label=above right:{\small{$(i-2)'$}}] (g) at (3. cm,2.87 cm) {G};
        \node[circle,fill=black,inner sep=0pt,minimum size=3pt,label=below left:{\small{$i-2$}}] (h) at (2 cm,1.13 cm) {H};
        \draw[dashed] (g) - -  (h);
		
		\draw(7.5,2.) circle (1cm);
		\node[circle,fill=black,inner sep=0pt,minimum size=3pt,label=above:{\small{$i+1$}}] (k) at (7.5 cm,3 cm) {K};
		\node[circle,fill=black,inner sep=0pt,minimum size=3pt,label=below:{\small{$(i+1)'$}}] (l) at (7.5 cm,1 cm) {L};
		\draw[dashed] (k) - -  (l);
		\node[circle,fill=black,inner sep=0pt,minimum size=3pt,label=below right:{\small{$i'$}}] (m) at (8.37 cm,1.5 cm) {M};
		\node[circle,fill=black,inner sep=0pt,minimum size=3pt,label=above left:{\small{$i$}}] (n) at (6.63 cm,2.5 cm) {N};
		\draw[red,dashed] (m) - -  (n);
		\node[circle,fill=black,inner sep=0pt,minimum size=3pt,label=right:{\small{$i-1$}}] (o) at (8.5 cm,2 cm) {O};
		\node[circle,fill=black,inner sep=0pt,minimum size=3pt,label=below left:{\small{$(i-1)'$}}] (p) at (6.5 cm,2 cm) {P};
		\draw[dashed] (o) - -  (p);
		\node[circle,fill=black,inner sep=0pt,minimum size=3pt,label=above right:{\small{$(i-2)'$}}] (g) at (8. cm,2.87 cm) {G};
		\node[circle,fill=black,inner sep=0pt,minimum size=3pt,label=below left:{\small{$i-2$}}] (h) at (7 cm,1.13 cm) {H};
		\draw[dashed] (g) - -  (h);
%	\draw [->,>=stealth] (4.8 cm, 2 cm) -- (5.1 cm, 2 cm);
%\draw[big arrow,orange] (4.2 cm, 2 cm) -- (5.9 cm, 2 cm);
\draw[->, line width=1mm](4.2 cm, 2 cm) -- (5.8 cm, 2 cm);	
		\end{tikzpicture}
	}
	\caption{Elementary admissible move of type IVb.}
\end{figure}
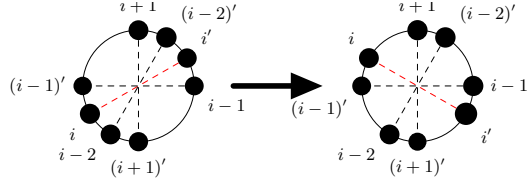
\vskip-0.5cm
{\small The point $i-2$ belongs to $(i-1,i)$. $\Cont(w)$ and the number $s$ of maximal monotone subsequences do not change. Hence, $\rk(w)$ does not change either.}
\vskip-0.3cm
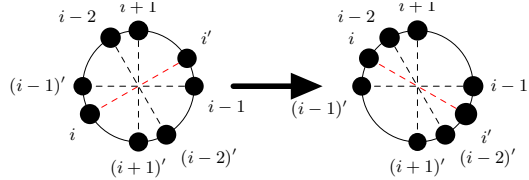
\begin{figure}[H]
	\centering
	\resizebox{\MyFig}{!}{
		\begin{tikzpicture}[line cap=round,line join=round,>=triangle 45,x=1.0cm,y=1.0cm]
		\clip(0,0) rectangle (10.,3.5);
		
		\draw(2.5,2.) circle (1cm);
		\node[circle,fill=black,inner sep=0pt,minimum size=3pt,label=above:{\small{$i+1$}}] (a) at (2.5 cm,3 cm) {A};
		\node[circle,fill=black,inner sep=0pt,minimum size=3pt,label=below:{\small{$(i+1)'$}}] (b) at (2.5 cm,1 cm) {B};
		\draw[dashed] (a) - -  (b);
		\node[circle,fill=black,inner sep=0pt,minimum size=3pt,label=above right:{\small{$i'$}}] (c) at (3.37 cm,2.5 cm) {C};
		\node[circle,fill=black,inner sep=0pt,minimum size=3pt,label=below left:{\small{$i$}}] (d) at (1.63 cm,1.5 cm) {D};
		\draw[red,dashed] (c) - -  (d);
		\node[circle,fill=black,inner sep=0pt,minimum size=3pt,label=below right:{\small{$i-1$}}] (e) at (3.5 cm,2 cm) {E};
		\node[circle,fill=black,inner sep=0pt,minimum size=3pt,label=left:{\small{$(i-1)'$}}] (f) at (1.5 cm,2 cm) {F};
		\draw[dashed] (e) - -  (f);
		\node[circle,fill=black,inner sep=0pt,minimum size=3pt,label=above left:{\small{$i-2$}}] (g) at (2. cm,2.87 cm) {G};
		\node[circle,fill=black,inner sep=0pt,minimum size=3pt,label=below right:{\small{$(i-2)'$}}] (h) at (3 cm,1.13 cm) {H};
		\draw[dashed] (g) - -  (h);
		
		\draw(7.5,2.) circle (1cm);
		\node[circle,fill=black,inner sep=0pt,minimum size=3pt,label=above:{\small{$i+1$}}] (k) at (7.5 cm,3 cm) {K};
		\node[circle,fill=black,inner sep=0pt,minimum size=3pt,label=below:{\small{$(i+1)'$}}] (l) at (7.5 cm,1 cm) {L};
		\draw[dashed] (k) - -  (l);
		\node[circle,fill=black,inner sep=0pt,minimum size=3pt,label=below right:{\small{$i'$}}] (m) at (8.37 cm,1.5 cm) {M};
		\node[circle,fill=black,inner sep=0pt,minimum size=3pt,label=above left:{\small{$i$}}] (n) at (6.63 cm,2.5 cm) {N};
		\draw[red,dashed] (m) - -  (n);
		\node[circle,fill=black,inner sep=0pt,minimum size=3pt,label=right:{\small{$i-1$}}] (o) at (8.5 cm,2 cm) {O};
		\node[circle,fill=black,inner sep=0pt,minimum size=3pt,label=below left:{\small{$(i-1)'$}}] (p) at (6.5 cm,2 cm) {P};
		\draw[dashed] (o) - -  (p);
		\node[circle,fill=black,inner sep=0pt,minimum size=3pt,label=above left:{\small{$i-2$}}] (g) at (7. cm,2.87 cm) {G};
		\node[circle,fill=black,inner sep=0pt,minimum size=3pt,label=below right:{\small{$(i-2)'$}}] (h) at (8 cm,1.13 cm) {H};
		\draw[dashed] (g) - -  (h);

%	\draw [->,>=stealth] (4.8 cm, 2 cm) -- (5.1 cm, 2 cm);
%\draw[big arrow,orange] (4.2 cm, 2 cm) -- (5.9 cm, 2 cm);
\draw[->, line width=1mm](4.2 cm, 2 cm) -- (5.8 cm, 2 cm);		
		\end{tikzpicture}
	}
	\caption{Elementary admissible move of type IVc.}
\end{figure}
\vskip-0.3cm
{\small The point $i-2$ belongs to $((i-1)',i')$.
	Either $\rk(w)$ does not change or it decreases by $2$.}

\medskip
We have analyzed all the possible types of  admissible elementary moves and concluded that, for each admissible move which changes the sign of $m_{\{1,2\}}$, $\rk(w)$ decreases. \end{proof}

\medskip

%{\color{red}

Recall the unipotent matrix $N$ in Example \ref{ex:L0N};
notice that
\[ (\exp(tN))_{i,j} = \begin{cases}
0, & i<j; \\
t^{i-j}/(i-j)!, & i\ge j. 
\end{cases} \]

\begin{lemma}\label{lem:NegPos}
For any $n\times n$-matrix $G\in \Lo_n^1$, 
\begin{enumerate}
\item there exists $t_{+}>0$ such that  $G \exp(tN)$
is totally positive for any $t> t_{+}$;
\item there exists $t_{-}<0$ such that  $G \exp(tN)$
is totally negative for any $t < t_{-}$.
\end{enumerate}
\end{lemma}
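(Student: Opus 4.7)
The plan is a leading-order asymptotic analysis of each minor of $G\exp(tN)$ as $|t| \to \infty$. The initial observation I would use is the factorization $\exp(tN) = D_t\,P\,D_t^{-1}$ with $D_t = \diag(1,t,t^2,\ldots,t^{n-1})$ and $P := \exp(N) \in \Lo_n^1$ a fixed matrix independent of $t$ (this matches $(\exp(tN))_{ij} = t^{i-j}/(i-j)!$ for $i \ge j$). In particular $G\exp(tN) \in \Lo_n^1$, as required.

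Applying Cauchy--Binet together with this factorization, for $I = \{i_1 < \cdots < i_k\}$ and $J = \{j_1 < \cdots < j_k\}$ one obtains
\[ \det\bigl((G\exp(tN))_{I,J}\bigr) = \sum_K \det(G_{I,K})\,\det(P_{K,J})\,t^{\sum K - \sum J}, \]
summing over $k$-subsets $K \subset \{1,\ldots,n\}$. Since $G,P \in \Lo_n^1$, the factor $\det(G_{I,K})$ vanishes unless $K \le I$ componentwise and $\det(P_{K,J})$ vanishes unless $J \le K$ componentwise, so the maximal power of $t$ appearing is $\sum I - \sum J$, attained \emph{uniquely} at $K = I$ with coefficient $\det(G_{I,I})\det(P_{I,J}) = \det(P_{I,J})$. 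The next step is to observe that $P \in \Pos$: the curve $\Gamma(t) = \exp(tN)$ is flag-convex (since $\Gamma^{-1}\Gamma' = N \in \cJ$) with $\Gamma(0) = \operatorname{id}$, so by Lemma~\ref{lem:ShGoSa0} the value $\Gamma(1) = P$ lies in $\Pos$. Consequently $\det(P_{I,J}) > 0$ whenever $i_\ell \ge j_\ell$ for all $\ell$---precisely the pairs for which a minor of an element of $\Lo_n^1$ can be nonzero.

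To conclude: for $t \to +\infty$ the leading term $\det(P_{I,J})\,t^{\sum I - \sum J}$ is strictly positive for every feasible $(I,J)$, hence all relevant minors of $G\exp(tN)$ are eventually positive, proving $G\exp(tN) \in \Pos$ for $t > t_+$. For $t \to -\infty$ the same leading term has sign $(-1)^{\sum I - \sum J} = (-1)^{\sum I + \sum J}$, which via the relation $L \in \Neg \Leftrightarrow DLD \in \Pos$ with $D = \diag(1,-1,\ldots,(-1)^{n-1})$ (recalled earlier in the text) is precisely the sign pattern characterizing $\Neg$, whence $G\exp(tN) \in \Neg$ for $t < t_-$ sufficiently negative. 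I do not foresee a substantial obstacle here; the only detail requiring a moment of care is the sign bookkeeping in the $\Neg$ case.
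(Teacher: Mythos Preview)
Your proposal is correct and follows essentially the same strategy as the paper: identify the leading term of each minor of $G\exp(tN)$ as $|t|\to\infty$ and check that its sign matches the required total positivity/negativity pattern. The paper's execution is more terse---it simply observes that $(G\exp(tN))_{ij}=(\exp(tN))_{ij}+\text{lower order terms}$, that every minor of $\exp(tN)$ is a monomial $at^d$ with $a>0$, and hence every corresponding minor of $G\exp(tN)$ has the same leading term; the conclusion then follows from the total positivity/negativity of $\exp(tN)$ for $t\gtrless 0$. Your factorization $\exp(tN)=D_tPD_t^{-1}$ together with Cauchy--Binet makes the same computation more explicit (and in particular transparently explains \emph{why} the minors of $\exp(tN)$ are monomials), at the cost of a little extra machinery. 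Either way the substance is the same.
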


\begin{proof}
Write $T(t) = \exp(tN)$.
Note that $G_{ij}=1$ if $i=j$ and $G_{ij} = 0$ if $i<j$.
Hence, 
$$(G T(t))_{ij}=T_{ij}(t)+\text{lower order terms in  }  t.$$  Any minor $m_T$ of $T(t)$ equals $m_T(t)=a t^d$ for some positive $a$ and $d$.  The corresponding minor $m_G$
of $G\cdot T(t)$ equals $m_G(t)=a t^d+p_{m,G}(t)$, where $p_{m,G}$ is a polynomial of degree strictly less than $d$.
Hence, for $t$  such that  $|t|\gg 0$ the sign of $m_G(t)$ coincides with that of $m_T(t)$.   It remains to notice that $T(t)$ is totally positive for positive $t$ and totally negative for negative $t$ and the lemma follows.
\end{proof}

\begin{corollary}\label{cor:extension}
Let $\gamma:I\to \bP^{n-1}$ be a globally convex curve
and $\gamma_\Ff:I\to \Lo_n^1$ be its osculating flag curve
(considered in the appropriate open Schubert cell identified with $\Lo_n^1$).
Then, $\gamma$ can be extended to a globally convex
$\tilde\gamma:[a,b]\to \bP^{n-1}$,
$I\subset [a,b]$ such that
$\tilde\gamma_\Ff:[a,b]\to   \Lo_n^1$,
$\tilde\gamma_\Ff(a)\in \Neg$,
$\tilde\gamma_\Ff(b)\in \Pos$.
\end{corollary}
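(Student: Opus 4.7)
The plan is to glue $\gamma_\Ff$ to natural flag-convex one-parameter extensions on each side, smooth the joins, and then invoke Lemma~\ref{lem:char} to convert back to a globally convex projective curve. First I would assume without loss of generality that $I=[\alpha,\beta]$ is compact (otherwise one restricts $\gamma$ to a compact sub-interval). Set $G_\beta=\gamma_\Ff(\beta)$ and consider the candidate right extension $\Gamma_+(t)=G_\beta\exp((t-\beta)N)$ for $t\ge\beta$. A direct computation (using that $N$ commutes with its own exponential) gives $\Gamma_+^{-1}\Gamma_+'\equiv N\in\cJ$, so $\Gamma_+$ is flag-convex; moreover $\Gamma_+(t)\in\Lo_n^1$ for all $t$ because $\Lo_n^1$ is a subgroup. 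Lemma~\ref{lem:NegPos}(1) then supplies some $b>\beta$ with $\Gamma_+(b)\in\Pos$. A mirror construction using $G_\alpha=\gamma_\Ff(\alpha)$ produces a left extension $\Gamma_-(t)=G_\alpha\exp((t-\alpha)N)$ for $t\le\alpha$ with $\Gamma_-(a)\in\Neg$ for some $a<\alpha$, via Lemma~\ref{lem:NegPos}(2).

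The only obstruction to simply concatenating $\Gamma_-$, $\gamma_\Ff$, and $\Gamma_+$ on $[a,b]$ is that the one-sided log derivatives at $\alpha$ and $\beta$ generally do not agree. This is remedied by a standard smoothing: the set $\cJ$ consists of matrices whose only nonzero entries are positive subdiagonal ones, so $\cJ\cong(0,\infty)^{n-1}$ is a convex cone. On a small collar $[\beta,\beta+\epsilon]$ kept strictly outside $I$, I would replace the constant log derivative $N$ of $\Gamma_+$ by the convex combination $\phi(t)\,\gamma_\Ff(\beta)^{-1}\gamma_\Ff'(\beta)+(1-\phi(t))\,N\in\cJ$, where $\phi:[\beta,\beta+\epsilon]\to[0,1]$ is a smooth bump equal to $1$ at $\beta$ and vanishing to all orders at $\beta+\epsilon$. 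Solving the resulting ODE yields a smoothly matching flag-convex extension which, past the collar, again has the form $G'_\beta\exp((t-\beta-\epsilon)N)$ for some $G'_\beta\in\Lo_n^1$, so Lemma~\ref{lem:NegPos}(1) applied to $G'_\beta$ still provides entry into $\Pos$ after enlarging $b$ if necessary. The left join is treated symmetrically, and $\gamma_\Ff$ itself is left untouched on all of $I$.

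The resulting curve $\tilde\gamma_\Ff:[a,b]\to\Lo_n^1$ is smooth, has log derivative pointwise in $\cJ$, agrees with $\gamma_\Ff$ on $I$, and satisfies $\tilde\gamma_\Ff(a)\in\Neg$ and $\tilde\gamma_\Ff(b)\in\Pos$. By Lemma~\ref{lem:char} it is the osculating flag curve of a globally convex $\tilde\gamma:[a,b]\to\bP^{n-1}$, which is the required extension. The main conceptual step is recognizing $t\mapsto G\exp(tN)$ as a universal flag-convex ``driving direction'' that Lemma~\ref{lem:NegPos} pushes into either $\Pos$ or $\Neg$; the main technical obstacle is merely the smoothing of the two joins, which is routine because $\cJ$ is convex.
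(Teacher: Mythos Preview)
Your approach is essentially the same as the paper's: extend $\gamma_\Ff$ on each side by $t\mapsto G\exp((t-\cdot)N)$ and invoke Lemma~\ref{lem:NegPos}. The paper in fact stops there, acknowledges that the resulting curve is not smooth at the two joins, and simply defers the smoothing (or the passage to a larger class of curves) to the references. You go further and attempt the smoothing explicitly.

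One small technical point about your smoothing: as written, it produces only a $C^1$ match at $\beta$ (and symmetrically at $\alpha$), not a $C^\infty$ one. On the collar your logarithmic derivative is $B(t)=\phi(t)\,A(\beta)+(1-\phi(t))\,N$ with $A(\beta)=\gamma_\Ff(\beta)^{-1}\gamma_\Ff'(\beta)$ a \emph{constant}, so $B'(\beta)=\phi'(\beta)(A(\beta)-N)$, whereas on the left the logarithmic derivative $A(t)=\gamma_\Ff(t)^{-1}\gamma_\Ff'(t)$ has $A'(\beta)$ determined by $\gamma$; there is no reason for these to agree, and the second derivatives of the two pieces then differ. The fix is immediate: replace the constant $A(\beta)$ by any smooth extension $\tilde A(t)$ of $A$ past $\beta$ (which exists and, for small $\epsilon$, stays in the open cone $\cJ$), and take $\phi$ to be identically $1$ near $\beta$ and identically $0$ near $\beta+\epsilon$. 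Then $B=\tilde A$ near $\beta$ and $B=N$ near $\beta+\epsilon$, giving $C^\infty$ joins on both sides of the collar. With this adjustment your argument is complete and slightly more explicit than the paper's.
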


\begin{proof}   Take $I=[s,f]$ and  set $b=f+t_+(\gamma_\Ff(f))$,
$a=s+t_-(\gamma_\Ff(s))$. Define  
$$\tilde\gamma_\Ff(t)=
\begin{cases}
\gamma_\Ff(t)\ , &  \text{ for } t\in I;\\
\gamma_\Ff(f)\cdot T(t-f)\ , & \text{ for } t\in [f,b];\\
\gamma_\Ff(s)\cdot T(t-s)\ , & \text{ for } t\in [a,s],
\end{cases}
$$ where $T(t)$ is defined in Lemma~\ref{lem:NegPos}.

We define $\tilde\gamma(t)$ by taking the first column of the matrix $\tilde\gamma_\Ff(t)$.
Lemma~\ref{lem:char}  implies that the curve $\tilde\gamma(t)$ is %locally 
globally convex.

%Since $\gamma$ is globally convex, then  for $t\in [s,f]$, $(\gamma_\Ff(s))^{-1}\cdot \gamma_\Ff(t)\in Pos$. By construction, 
%$(\tilde\gamma_\Ff(a))^{-1}\cdot\tilde\gamma_\Ff(t)\in Pos$
%for $t\in [a,s]$. In particular, $(\tilde\gamma_\Ff(a))^{-1}\cdot\tilde\gamma_\Ff(s) \in Pos$. Similarly, $(\tilde\gamma_\Ff(f))^{-1}\cdot \tilde\gamma_\Ff(t)\in Pos$
%for $t\in [f,b]$. Combining, we obtain that $(\tilde\gamma_\Ff(a))^{-1}\cdot \tilde\gamma_\Ff(t)\in Pos$
%for $t\in [a,b]$ which implies that the curve $\tilde\gamma$ is globally convex by Lemmas~\ref{lem:disconj} and ~\ref{lem:conj}.

Finally, by definition of $t_-$ and $t_+$, one has that  $\tilde\gamma_\Ff(a)\in \Neg$,  $\tilde\gamma_\Ff(b)\in \Pos$.

Notice that the curve $\tilde\gamma_{\Ff}$ is usually not smooth.
The curve can be perturbed to become smooth
or we can work with a larger space of curves.
These issues are amply discussed in several papers,
including \cite{GoSaX, GoSa1}.
\end{proof}

\noindent
\begin{proof}[Proof of Theorem~\ref{th:main}] 
Consider a flag-convex curve $\Gamma_2: [a,b] \to \cC_2$.
From the previous results we may assume that
$\Gamma_2(a) \in \Neg$, $\Gamma_2(b) \in \Pos$.
Consider the associated word as a function of $t$:
as $t$ increases, we perform admissible moves.
The rank of the associated word starts
as $2(n-2)$ and ends as $0$.
The rank never increases and decreases by $1$
at every zero of $m_2$;
it may also decrease at other points.
Thus, the number of zeros of $m_2$ is at most $2(n-2)$,
as desired.
\end{proof}

\bigskip\goodbreak

\section{Proof of Theorem \ref{th:La}}
% and \ref{coro:ineq}}
\label{sec:La}

We start by introducing a special class of matrix curves.
Namely, given a pair of matrices $(N_0,L_0)$,
where $N_0$ is a nilpotent lower triangular matrix
with positive subdiagonal entries and zero entries elsewhere,
and $L_0 \in \Lo_n^1$,
define the curve  $\Gamma_{N_0,L_0}: \RR \to \Lo_n^1$ as given by
\begin{equation}\label{eq:curve}
\Gamma_{N_0,L_0}(t) = L_0 \exp(t N_0).
\end{equation}
One can easily see that $\Gamma_{N_0,L_0}$ is  flag-convex
and, for $i > j$,
its entry $(i,j)$ is a polynomial of degree $i - j$.
We call such flag-convex curves \emph{polynomial}.
(They are closely related to the fundamental solutions
of the simplest differential equation $y^{(n)}=0$.) 

%Given a flag-convex curve $\Gamma$, define $m_{\Gamma,k} = m_k: I \to \RR$, where 
%$$m_k(t) = \det(\swminor(\Gamma(t),k)),$$
%where $\swminor(L,k)$ is the $k\times k$ submatrix of $L$
%formed by the last $k$ rows and first $k$ columns. 

%Observe now that if we interpret $ \Lo_n^1$ as the top-dimensional cell in $\Fl_n$ with respect to some fixed flag $g$, then the moments of non-transversality of a flag curve $\Gamma$ to the $(n-k)$-dimensional linear subspace  belonging to  $g$ are exactly the zeros of $m_k(t)$. Thus  
%Conjecture \ref{conj:Wr} (if true) implies that
%the number of real zeroes of $m_k(t)$ is at most $k(n-k)$.

For a polynomial curve $\Gamma_{N_0,L_0}$, the function 
$m_k(t)$ is indeed a real polynomial of degree $k(n-k)$ in $t$.  
So for polynomial curves, Conjecture~\ref{conj:Wr} trivially holds.

\smallskip
In this section we will first prove Proposition \ref{prop:L},
a warm-up result, and then prove Theorem~\ref{th:La}.
Proposition~\ref{prop:L} shows that there exist polynomial flag-convex curves
which are non-transversal to the reference flag at $(n^3-n)/6$ distinct points
which implies that the estimates in Theorems \ref{th:La}
(and therefore also in Corollary \ref{coro:ineq})
hold for polynomial curves. 
% As already mentioned,
% Theorem~\ref{th:La} essentially implies Theorem~\ref{coro:ineq}.
% In geometric terms it means that,
% for any given a flag-convex curve $\Ga$ in $\Lo_n^1$
% realized as the top-dimensional Schubert cell in $\Fl_n$,
% one can find some complete flag $\tilde f\in \Fl_n$
% such that $\Ga$ is non-transversal to $\tilde f$
% at exactly $(n^3-n)/6$ distinct points. 

%The following theorem shows that this estimate is sharp.

\begin{proposition}
\label{prop:L}
Choose  a nilpotent lower triangular matrix  $N_0$ 
with positive subdiagonal entries and zero entries elsewhere. 
Then there exists $L_0  \in \Lo_n^1$ such that,
for the polynomial curve $\Ga_{N_0,L_0}$
given by Equation \eqref{eq:curve} and every  $k=1,\dots, n-1$,
all roots of $m_k(t)$ are real and simple.
Furthermore, $L_0$ can be taken so that all such roots are distinct, 
implying that there are  totally exactly $(n^3 - n)/6$ such roots,
all real and distinct.
\end{proposition}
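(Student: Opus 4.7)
The strategy is to combine a degree count for the polynomials $m_k(t)$ with the multiplicity theory from \cite{GoSa0}, and then to perform an inductive construction.

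First, for the polynomial curve $\Gamma_{N_0,L_0}(t) = L_0 \exp(tN_0)$, each $m_k(t)$ is a real polynomial of degree at most $k(n-k)$, and the degree is exactly $k(n-k)$ for generic $L_0 \in \Lo_n^1$: since the $(i,j)$ entry of $\exp(tN_0)$ is a polynomial of degree $i-j$ with positive leading coefficient, the top-degree term of $m_k(t)$ is a nonzero polynomial in the entries of $L_0$ and $N_0$. Summing yields $\sum_{k=1}^{n-1}\deg m_k = (n^3-n)/6$, which upper-bounds the number of real roots counted with multiplicity. Equality in this bound forces each $m_k$ to split over $\bR$ with simple roots, and forces distinct $m_k$ to have disjoint zero sets --- exactly the desired conclusion. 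Thus the proposition reduces to exhibiting some $L_0$ which realizes equality.

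Next, I would reformulate the target using the multiplicity theorem of \cite{GoSa0}: when $\Gamma(t_0) \in \Bru_\sigma$, the multiplicity of $t = t_0$ as a root of $m_k$ equals $\mult_k(\sigma)$. The cells $\Bru_{\eta a_i}$ sitting one transposition below the top element $\eta$ in the Bruhat order have multiplicity vector with a single $1$ in one coordinate and zeros elsewhere, so visiting such a cell contributes one simple zero to exactly one $m_k$ and no zero to any other. The task thus becomes: construct $L_0$ so that $\Gamma_{N_0,L_0}$ meets cells of the form $\Bru_{\eta a_i}$ at $(n^3-n)/6$ distinct real times, visiting each index $k \in \{1,\dots,n-1\}$ exactly $k(n-k)$ times.

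This construction I would carry out inductively, as packaged in Lemma~\ref{lemma:goodmatrix}. The inductive step, governed by Lemma~\ref{lemma:goodstep} together with Lemma~2.4 of \cite{GoSa0} describing how multiplicities change between $\sigma_0 \triangleleft \sigma_1 = \sigma_0 a_i$, takes a matrix producing a curve with $j$ simple real roots among the $m_k$ and outputs a perturbed matrix producing $j+1$ such roots; after $(n^3-n)/6$ iterations the final $L_0$ satisfies the proposition. The main obstacle is the simultaneous control of \emph{reality} and \emph{distinctness} at each step: a generic perturbation of $L_0$ could split a multiple root into a complex conjugate pair, but the flag-convexity of $\Gamma_{N_0,L_0}$ together with the sign structure encoded in $\mult(\eta a_i)$ force the split to be real, while distinctness across different $k$ follows automatically since each transposition move affects only a single coordinate of the multiplicity vector.
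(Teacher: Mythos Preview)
Your outline follows the paper's approach closely---the inductive construction via Lemmas~\ref{lemma:goodstep} and~\ref{lemma:goodmatrix} is exactly what delivers an $\eta$-good matrix with every $m_k$ having $k(n-k)$ real simple roots. But your treatment of \emph{distinctness across different $k$} has a genuine gap.

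First, the factual claim ``each transposition move affects only a single coordinate of the multiplicity vector'' is false: Equation~\eqref{equation:lemma24} (Lemma~2.4 of \cite{GoSa0}) shows that passing from $\sigma_1$ to $\sigma_0 = (i_0\,i_1)\sigma_1$ changes $\mult_k$ for \emph{every} $k$ with $i_0 \le k < i_1$, so a single step of the induction can spawn several new roots, one for each such $m_k$. (This also explains why the induction has $\binom{n}{2}$ steps, not $(n^3-n)/6$.) Second, and more seriously, even if each step created only one new root, nothing in your argument prevents that root of $m_k$ from landing on an existing root of some $m_{k'}$. The degree-count reduction in your first paragraph does not rescue this: equality in the bound \emph{with multiplicity} only forces all roots real, not simple or disjoint; you would need equality in the count of \emph{distinct} roots pooled across all $k$, which is precisely what remains to be shown.

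The paper handles this with a separate argument after the induction: once an $\eta$-good $L_0$ is in hand, the real-simple property persists on an open neighbourhood $A$, and the set of $\tilde L_0 \in A$ for which two $m_k$'s share a root is shown to be a countable union of measure-zero sets $X_\rho$ (one for each $\rho \in S_n$ with $\operatorname{codim}\Bru_\rho \ge 2$), so a generic $\tilde L_0 \in A$ avoids them all. You need some argument of this kind; the inductive construction alone does not give it.
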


To settle Proposition~\ref{prop:L} we need more notation.
As in \cite{GoSa0} (see especially Sections 2 and 7),
let $S_n$ be the symmetric group with generators $a_i = (i\quad i+1)$.
The symmetric group is endowed with the usual Bruhat order.
The top permutation (or the Coxeter element) of $S_n$
is denoted by $\eta$ (another common notation is $w_0$).
For a permutation $\sigma \in S_n$,
define its \emph{multiplicity vector} $\mult(\sigma) \in \ZZ^{n-1}$
with coordinates
$\mult_k(\sigma) = (1^\sigma + \cdots + k^\sigma) - (1 + \cdots + k)$;
thus, $\mult_k(\eta) = k(n-k)$.
If $\sigma_0 \triangleleft \sigma_1 = \sigma_0 a_j = (i_0 i_1) \sigma_0$,
then 
\begin{equation}
\label{equation:lemma24}
\mult_k(\sigma_1) = \begin{cases}
\mult_k(\sigma_0) + 1, & i_0 \le k < i_1, \\
\mult_k(\sigma_0), & \textrm{otherwise;} \end{cases} 
\end{equation}
this is Lemma 2.4 in \cite{GoSa0}.

For $\rho \in S_n$, the permutation matrix $P_\rho$ has nonzero entries
in positions $(i,i^\rho)$ so that
$e_i^\transpose P_\rho = e_{i^\rho}^\transpose$.
Apply the Bruhat factorization to decompose $\Lo_n^1$
as a disjoint union of subsets $\Bru_\rho$, $\rho \in S_n$.
More precisely, for $L \in \Lo_n^1$, write $L \in \Bru_\rho$
if and only if there exist upper triangular matrices $U_1$ and $U_2$
such that $L = U_1 P_\rho U_2$.
In particular, $\Bru_{e} = \{I\}$ and
$\Bru_{\eta}$ is open and dense.
If $\Gamma: I \to \Lo_{n}^1$ is smooth and flag-convex,
$\Gamma(0) \in \Bru_{\rho}$ and $\sigma = \eta\rho$ then
$t = 0$ is a root of multiplicity $\mult_k(\sigma)$ of $m_k(t) = 0$;
this is Theorem 4 in \cite{GoSa0}.

Recall that $\frakl_j$ is the matrix whose only nonzero entry equals $1$
in position $(j+1,j)$.
Let $\lambda_j(t) = \exp(t \frakl_j) \in \Lo_n^1$ so that
$\lambda_j(t)$ has an entry equal to $t$ in position $(j+1,j)$;
the remaining entries equal $1$ (on the main diagonal)
and $0$ (elsewhere).
If $\rho_0 \triangleleft \rho_1 = \rho_0 a_j$,
$L_0 \in \Bru_{\rho_0}$ and $t \ne 0$, then
$L_1 = L_0 \lambda_j(t) \in \Bru_{\rho_1}$
(see Section 5 in \cite{GoSa0}).

Consider $N_0$ arbitrary but fixed,
as in the statement of Proposition \ref{prop:L}.
Given $L_0 \in \Lo_n^1$, construct the curve $\Gamma_{N_0,L_0}(t) = L_0 \exp(t N_0)$
and the real polynomials $m_k(t) \in \RR[t]$ as above.
We say that a matrix $L_0$ is \emph{$\rho$-good} if and only if
$L_0 \in \Bru_{\rho} \subset \Lo_n^1$ and,
for all $k$, all nonzero roots of $m_k$ are real and simple.
Notice that $\IdI$ is (vacuously) $e$-good.

\begin{lemma}
\label{lemma:goodstep}
Consider $\rho_0, \rho_1 \in S_n$,
$\rho_0 \triangleleft \rho_1 = \rho_0 a_j$.
Let $L_0 \in \Bru_{\rho_0} \subset \Lo_n^1$ be a $\rho_0$-good matrix.
Then there exists $\epsilon > 0$ such that,
for all $\tau \in \RR$ satisfying
the restriction $0 < |\tau| < \epsilon$ one has that  
$L_1 = L_0 \lambda_j(\tau)$ is $\rho_1$-good.
\end{lemma}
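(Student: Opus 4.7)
The plan is to combine Theorem~4 and the Bruhat-cell evolution result from \cite{GoSa0} (both recalled in the text above) with a short complex-conjugation argument. The recalled fact from Section~5 of \cite{GoSa0} gives $L_1 = L_0 \lambda_j(\tau) \in \Bru_{\rho_1}$ for every $\tau \ne 0$. Theorem~4 of \cite{GoSa0}, applied to the polynomial (hence flag-convex) curves $\Gamma_{N_0,L_0}$ and $\Gamma_{N_0,L_1}$, then shows that the multiplicities of $t = 0$ as roots of $m_k^{L_0}$ and $m_k^{L_1}$ are $\mult_k(\sigma_0)$ and $\mult_k(\sigma_1)$, respectively, where $\sigma_i = \eta \rho_i$. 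Multiplication by $\eta$ reverses the Bruhat order, so $\sigma_1 \triangleleft \sigma_0 = \sigma_1 a_j$, and the reverse reading of \eqref{equation:lemma24} gives $\mult_k(\sigma_0) - \mult_k(\sigma_1) \in \{0,1\}$, the value being $1$ precisely for $k$ in a certain consecutive range determined by the transposition data.

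I would then handle the persistence of the existing nonzero roots of $m_k^{L_0}$: each such root is real and simple by $\rho_0$-goodness, so the implicit function theorem perturbs it, for $|\tau|$ sufficiently small, to a unique nearby real simple root of $m_k^{L_1}$ that remains bounded away from $0$. For the indices $k$ with $\mult_k(\sigma_0) = \mult_k(\sigma_1)$ this already settles everything, since the total root count in $\CC$ (counted with multiplicity) equals the constant $k(n-k)$ and is fully accounted for.

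The main step, and the one requiring a genuine idea, is the case $\mult_k(\sigma_0) = \mult_k(\sigma_1) + 1$: by continuity of the multiset of roots together with the drop in multiplicity at $t = 0$, exactly one root (counted with multiplicity) escapes from the origin for $\tau \ne 0$ small. The crucial observation is that $m_k^{L_1}$ has real coefficients, so its non-real complex roots come in conjugate pairs contributing an even total multiplicity; a lone escaping root must therefore be real, and its multiplicity must be exactly $1$ (hence the root is simple), since any multiplicity-$\ge 2$ escape would contribute at least two roots to the escape count. Distinctness from the persistent nonzero roots is automatic, as the new root tends to $0$ as $\tau \to 0$ while the persistent ones stay bounded away from $0$. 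The only mild technical obstacle is uniformizing the smallness threshold over the $n-1$ indices $k$ and the finitely many persistent roots, resolved by taking the minimum of finitely many positive bounds; this yields the required $\epsilon > 0$ and establishes the $\rho_1$-goodness of $L_1$ for every $\tau \in \RR$ with $0 < |\tau| < \epsilon$.
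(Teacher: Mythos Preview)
Your proposal is correct and follows essentially the same route as the paper's proof: both invoke the Bruhat-cell fact and Theorem~4 of \cite{GoSa0} to compute the multiplicities at $t=0$, use Equation~\eqref{equation:lemma24} to see the drop is $0$ or $1$, note that the existing simple nonzero roots persist, and conclude that the single escaping root (when the drop is $1$) must be real and simple because $m_k$ is a real polynomial of fixed degree $k(n-k)$. Your write-up is simply more explicit about the conjugate-pair and root-counting argument that the paper compresses into the phrase ``it must therefore be real and, for small $|\tau|$, simple.''
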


\begin{proof}
As above, we have $L_1 \in \Bru_{\rho_1}$.
For $\tau$ near $0$, nonzero real simple roots of $m_k$
remain nonzero, real and simple.

Let $\sigma_0 = \eta\rho_0$ and $\sigma_1 = \eta\rho_1$ so that
$\sigma_1 \triangleleft \sigma_0 = \sigma_1 a_j = (i_0 i_1) \sigma_1$.
As in Equation \ref{equation:lemma24},
$\mult_k(\sigma_1) = \mult_k(\sigma_0) - 1$ for $i_0 \le k < i_1$
and
$\mult_k(\sigma_1) = \mult_k(\sigma_0)$ otherwise.
Originally (i.e. for $L_0$) the root $t = 0$  has multiplicity
$\mult_k(\sigma_0)$;
after perturbation (i.e. for $L_1$) it has multiplicity
$\mult_k(\sigma_1)$.
Thus, for $k < i_0$ or $k \ge i_1$ no new root is born
and we are done.
For $i_0 \le k < i_1$ exactly one new root is born:
it must therefore be real and, for small $|\tau|$, simple.
\end{proof}

\begin{lemma}
\label{lemma:goodmatrix}
For all $\rho \in S_n$ there exist $\rho$-good matrices.
\end{lemma}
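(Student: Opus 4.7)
The plan is to proceed by induction on the length $\ell(\rho)$ of the permutation $\rho$ with respect to the generators $a_j$ (equivalently, on the Bruhat order). The base case and the inductive step are both essentially given to us by the surrounding material, so the argument should be short.

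For the base case $\ell(\rho) = 0$, take $\rho = e$ and $L_0 = \IdI$. Then $\Gamma_{N_0,\IdI}(t) = \exp(t N_0)$, and a direct computation (or the identity $m_k(\IdI \cdot \exp(tN_0)) = m_k(\exp(tN_0))$ together with the structure of $\exp(tN_0)$) shows that each $m_k(t)$ is a monomial $c_k t^{k(n-k)}$ with $c_k > 0$. Hence $m_k$ has no nonzero roots at all, so the goodness condition (which only constrains nonzero roots) is vacuously satisfied, and $\IdI \in \Bru_e$ by definition.

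For the inductive step, assume the claim for all permutations of length less than $\ell(\rho)$ and let $\rho \in S_n$ have $\ell(\rho) \ge 1$. Choose a reduced decomposition $\rho = \rho_0 a_j$ with $\rho_0 \triangleleft \rho$ and $\ell(\rho_0) = \ell(\rho) - 1$; such a factorization exists because $\rho$ is not the identity. By the inductive hypothesis, there is a $\rho_0$-good matrix $L_0 \in \Bru_{\rho_0}$. Applying Lemma \ref{lemma:goodstep} to this $L_0$ yields an $\epsilon > 0$ such that $L_1 = L_0 \lambda_j(\tau)$ is $\rho$-good for every $\tau$ with $0 < |\tau| < \epsilon$. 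Picking any such $\tau$ produces the desired $\rho$-good matrix, completing the induction.

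I do not anticipate any real obstacle here: Lemma \ref{lemma:goodstep} has been stated so as to provide exactly the inductive step, and the base case is immediate. The only thing worth double-checking is the base case computation for $m_k$ at $L_0 = \IdI$ (to confirm the nonzero-root condition is vacuous), and the existence of a reduced factorization $\rho = \rho_0 a_j$ with $\rho_0 \triangleleft \rho$, which is a standard fact about the Bruhat order on $S_n$.
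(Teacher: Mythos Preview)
Your proof is correct and follows essentially the same approach as the paper: induction on the length of $\rho$, using $\IdI$ as the $e$-good base case and Lemma~\ref{lemma:goodstep} for the inductive step. The paper phrases the induction by fixing a reduced word $\rho = a_{i_1}\cdots a_{i_l}$ and walking from $\rho_0 = e$ up to $\rho_l = \rho$, but this is exactly your argument.
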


\begin{proof}
Consider a reduced word $\rho = a_{i_1} \cdots a_{i_l}$
where $l = \inv(\rho)$ is the number of inversions of $\rho$. 
For $k \le l$, define $\rho_k = a_{i_1} \cdots a_{i_k}$;
in particular, $\rho_0 = e$ and $\rho_l = \rho$.
As mentioned above, $I$ is $\rho_0$-good.
Apply Lemma \ref{lemma:goodstep} to deduce that
if there exists a $\rho_k$-good matrix then
there exists a $\rho_{k+1}$-good matrix.
The result follows by induction.
\end{proof}

\begin{proof}[Proof of Proposition \ref{prop:L}]
By Lemma \ref{lemma:goodmatrix},
there exists an $\eta$-good matrix $L_0$.
The roots of every polynomial $m_k(t)$ are real and simple.
The same holds for any $\tilde L_0 \in A$
where $A$ is some sufficiently small open neighborhood of $L_0$.
It suffices to show that for some such $\tilde L_0$
all roots are distinct.

Let $\rho \in S_n$ be different from $\eta$ and $\eta a_i$, $1 \le i < n$.
Then $\Bru_\rho \subset \Lo_n^1$ is a submanifold of codimension at least 2.
Define
\[ X_\rho = \{ L \exp(t N_0); L \in \Bru_\rho, t \in \RR\}, \qquad
Y = \Lo_n^1 \smallsetminus
\bigcup_{\rho \in S_n \smallsetminus \{\eta, \eta a_1, \ldots, \eta a_{n-1}\}}
X_\rho; \]
each set $X_\rho$ has measure zero.
The set $Y$ has total measure and is therefore dense.
Take $\tilde L_0 \in A \cap Y$.
We claim that all roots of the polynomials $m_k$ are real, simple and distinct,
as desired.

Indeed, assume by contradiction that
$m_{k_1}(t_0) = m_{k_2}(t_0) = 0$, $k_1 < k_2$.
Take $\rho \in S_n$ such that
$\Gamma_{N_0,L_0}(t_0) = \tilde L_0 \exp(t_0 N_0) \in \Bru_\rho$;
set $\sigma = \eta\rho$.
We have that $\mult_{k_1}(\sigma) \ge 1$ and $\mult_{k_2}(\sigma) \ge 1$
 whence $\sigma \notin \{e,a_1,\ldots,a_{n-1}\}$
and therefore
$\rho \in S_n \smallsetminus \{\eta, \eta a_1, \ldots, \eta a_{n-1}\}$.
Thus $\tilde L_0 = \Gamma_{N_0,L_0}(t_0) \exp(-t_0 N_0) \in X_\rho$
and therefore $\tilde L_0 \notin Y$, a contradiction.
\end{proof}

\begin{example}
\label{example:abacdcbacb}
For $n = 5$, let $N_0$ be the matrix with subdiagonal entries equal to $1$.
Write $\eta = a_1 a_2 a_1 a_3 a_4 a_3 a_2 a_1 a_3 a_2$,
%  = abacdcbacb$,
an arbitrary reduced word.
The matrices 
\[ \lambda_1(1), \quad
\lambda_1(1) \lambda_2(-1), \quad
\lambda_1(1) \lambda_2(-1) \lambda_1(1) \]
are easily seen to be $a_1$-, $a_1a_2$- and $a_1a_2a_1$-good, respectively
(signs are chosen in an arbitrary manner).
If we thus proceed from left to right,
at each step taking a number of sufficiently small absolute value,
we obtain the following example of an $(\eta b)$-good matrix:
\[ L_0 = \lambda_1(1) \lambda_2(-1) \lambda_1(1) 
\lambda_3(\frac18) \lambda_4(-\frac18) \lambda_3(\frac{1}{64})
\lambda_2(\frac{1}{512}) \lambda_1(-\frac{1}{512}) \lambda_3(-\frac{1}{4096}).
\]
For $\Gamma_{N_0,L_0}(t) = L_0 \exp(t N_0)$,
all roots of the polynomials $m_k$ are real, simple and distinct
(see also Remark \ref{remark:itinerary}).
%The itinerary of $\Gamma$ is $\iti(\Gamma) = dcbcdabacbcbabdcbadc$.
\end{example}

%\section{Proof of Theorem \ref{th:La}} \label{sec:thLa} 

% \begin{theorem}
% \label{th:La}
% Consider a smooth flag-convex curve $\Gamma_0: I \to \Lo_n^1$
% (where $I \subset \RR$ is a non-degenerate interval).
% Then there exist $I_1 \subset I$ and $L_1  \in \Lo_n^1$ such that,
% for $\Gamma_1(t) = L_1 \Gamma_0(t)$ and $m_k = m_{\Gamma_1,k}$,
% the following properties hold:
% \begin{enumerate}
% \item{all roots of each $m_k$ in $I_1$ are simple and
% belong to the interior of $I_1$;}
% \item{roots are distinct:
% if $k_1 \ne k_2$ then $m_{k_1}$ and $m_{k_2}$ have no common roots;}
% \item{for each $k$, the function $m_k$ admits precisely $k(n-k)$
% roots in $I_1$.}
% \end{enumerate}
% \end{theorem}

Let $\Pos \subset \Lo_n^1$ (resp. $\Neg \subset \Lo_n^1$)
be the open subset of totally positive (resp. negative) matrices.
% In \cite{GoSa0} the set $\Pos$ is denoted by $\Pos_\eta$.
% Recall that the boundary of $\Pos_\eta$ is (in the notation of \cite{GoSa0})
% \[ \partial\Pos_\eta =
% \bigsqcup_{\sigma\in S_n, \sigma \ne \eta} \Pos_\sigma. \]
If $\Gamma: I \to \Lo_n^1$ is flag-convex and
$\Gamma(t_0) \in \overline\Pos$ then
$\Gamma(t) \in \Pos$ for all $t > t_0$;
similarly, if $\Gamma(t_0) \in \overline\Neg$ then
$\Gamma(t) \in \Neg$ for all $t < t_0$:
see Lemma \ref{lem:ShGoSa0} above
and Lemma 5.7 from \cite{GoSa0}.

We are almost ready to prove Theorem~\ref{th:La}. 
We may assume without loss of generality
that $0 \in I_1 \subset I$
and that $\Gamma_\bullet(0) = \IdI$.
Take $I_2 = [t_{-},t_{+}] \subseteq I_1$ with
$t_{-} < 0 < t_{+}$.
Notice that $t = 0$ is the only root of $m_{\Gamma_\bullet;k}$ in $I$:
also, for $t > 0$ we have $\Gamma(t) \in \Pos$
and for $t < 0$ we have $\Gamma(t) \in \Neg$.
Also, $t = 0$ is a root of multiplicity $k(n-k) = \mult_k(\eta)$
of $m_{\Gamma_\bullet;k}$.
Given $L_1 \in \Lo_n^1$, set $\Gamma_1(t) = L_1 \Gamma_\bullet(t)$;
write $m_{L_1;k} = m_{\Gamma_1;k}$.
Thus, if $L_1 \in \Bru_\rho$ and $\sigma = \eta\rho$ then
$t = 0$ is a root of multiplicity $\mult_k(\sigma)$ of $m_{L_1;k}$.

For $\rho = \eta\sigma \in S_n$, a matrix $L_1$ is \emph{$\rho$-good}
(for $\Gamma_\bullet$ and $I_2 = [t_{-},t_{+}]$ fixed)
if and only if, for $\Gamma_1(t) = L_1 \Gamma_\bullet(t)$, we have:
$L_1 \in \Bru_{\rho} \subset \Lo_n^1$,
$\Gamma_1(t_{+}) \in \Pos$, $\Gamma_1(t_{-}) \in \Neg$ and,
for all $k$, the function $m_{L_1;k}$ admits precisely
$k(n-k) - \mult_k(\sigma)$ nonzero roots in $I \smallsetminus \{0\}$,
all in the interior of $I_2$ and all simple.
Recall that $L_1 \in \Bru_{\rho}$ 
implies that $t = 0$ is a root of $m_{L_1,k}$
of multiplicity $\mult_k(\sigma)$.
% Also, if $t_+$ and $t_-$ are the largest and smallest roots
% (among all $k$) then
% $\Gamma_1(t_+) \in \partial\Pos_\eta$ and
% $\Gamma_1(t_-) \in \partial\Neg_\eta$.
Notice that $\IdI$ is $e$-good.
% (with $t_+ = t_- = 0$).

\begin{lemma}
\label{lemma:goodstepLa}
Consider $\rho_0, \rho_1 \in S_n$,
$\rho_0 \triangleleft \rho_1 = \rho_0 a_j$.
Let $L_0 \in \Bru_{\rho_0} \subset \Lo_n^1$ be a $\rho_0$-good matrix.
Then there exists $\epsilon > 0$ such that,
for all $\tau \in \RR$,
if $0 < |\tau| < \epsilon$, then 
$L_\tau = L_0 \lambda_j(\tau)$ is $\rho_1$-good.
\end{lemma}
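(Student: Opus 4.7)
The strategy mirrors Lemma \ref{lemma:goodstep}, the analogous step in the polynomial setting, with the main new ingredient being a local bifurcation argument at $t = 0$ that replaces the polynomial-degree count used there. The two ``outer'' conditions of $\rho_1$-goodness are immediate. For $\tau \ne 0$, $L_\tau = L_0 \lambda_j(\tau) \in \Bru_{\rho_1}$ by the fact cited from Section~5 of \cite{GoSa0}. Since $\Pos$ and $\Neg$ are open and $L_0 \Gamma_\bullet(t_\pm)$ lies respectively in them, continuity puts $L_\tau \Gamma_\bullet(t_\pm)$ in the same sets for $|\tau|$ sufficiently small. The shifted curve $\Gamma_\tau(t) = L_\tau \Gamma_\bullet(t)$ is still flag-convex (left multiplication by a constant preserves $\Gamma^{-1}\Gamma' \in \cJ$), so the propagation results in Lemma \ref{lem:ShGoSa0} and Lemma~5.7 of \cite{GoSa0} give $\Gamma_\tau(t) \in \Pos$ for every $t \ge t_+$ in $I$ and $\Gamma_\tau(t) \in \Neg$ for every $t \le t_-$ in $I$, ruling out any zero of $m_{L_\tau;k}$ in $I \setminus (t_-,t_+)$ (and at the endpoints).

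The heart of the argument is the local analysis at $t = 0$. The key observation is that $\lambda_j(\tau) = \IdI + \tau \frakl_j$ (since $\frakl_j^2 = 0$), hence $L_\tau - L_0 = \tau L_0 \frakl_j$ is rank one; the southwest $k \times k$ minor of $L_\tau \Gamma_\bullet(t)$ therefore differs from that of $L_0 \Gamma_\bullet(t)$ by a matrix of rank at most one, and the matrix determinant lemma yields
\[
m_{L_\tau;k}(t) \;=\; m_{L_0;k}(t) + \tau\, Q_k(t),
\]
linear in $\tau$, with $Q_k$ smooth in $t$. Setting $\sigma_0 = \eta\rho_0$, $\sigma_1 = \eta\rho_1$, so that $\sigma_1 \triangleleft \sigma_0 = \sigma_1 a_j = (i_0\;i_1)\sigma_1$, Equation~\eqref{equation:lemma24} gives $\mult_k(\sigma_1) = \mult_k(\sigma_0) - 1$ for $i_0 \le k < i_1$ and $\mult_k(\sigma_1) = \mult_k(\sigma_0)$ otherwise; Theorem~4 of \cite{GoSa0} says the order of vanishing of $m_{L_0;k}$ at $0$ is $\mult_k(\sigma_0)$, while that of $m_{L_\tau;k}$ for $\tau \ne 0$ is $\mult_k(\sigma_1)$. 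Writing $m_{L_0;k}(t) = t^{\mult_k(\sigma_0)} g_0(t)$ with $g_0(0) \ne 0$, successive differentiation at $t=0$ of the displayed identity forces $Q_k(t) = t^{\mult_k(\sigma_1)} q(t)$ with $q$ smooth, and moreover $q(0) \ne 0$ in the multiplicity-drop case. The resulting factorization
\[
m_{L_\tau;k}(t) \;=\; t^{\mult_k(\sigma_1)} \bigl( t^{\,\mult_k(\sigma_0) - \mult_k(\sigma_1)}\, g_0(t) + \tau\, q(t) \bigr)
\]
makes the bifurcation transparent. In the no-drop case the bracketed factor is nonzero at $(0,0)$ and no new root is born. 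In the drop case the bracket is $t g_0(t) + \tau q(t)$, whose $t$-derivative at $(0,0)$ equals $g_0(0) \ne 0$; the implicit function theorem then produces a unique smooth real curve $t = t^*(\tau)$ of zeros, with $t^*(0) = 0$, $t^*(\tau) \ne 0$ for $\tau \ne 0$, and simple for $|\tau|$ small.

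The rest is bookkeeping. The finitely many simple nonzero zeros of $m_{L_0;k}$ in $I_2^{\circ}$ persist as simple nonzero zeros of $m_{L_\tau;k}$ for small $\tau$ by the implicit function theorem, and on the compact complement of small neighborhoods of these and of $0$ inside $[t_-,t_+]$ uniform continuity keeps $m_{L_\tau;k}$ bounded away from zero, preventing spurious roots. Combined with the first paragraph this accounts for exactly $k(n-k) - \mult_k(\sigma_1)$ simple nonzero zeros of $m_{L_\tau;k}$ in $I$, all lying in $I_2^{\circ}$, which is precisely the $\rho_1$-goodness count. The single nontrivial step is the bifurcation at $t = 0$: the rank-one linear-in-$\tau$ structure of the perturbation is what makes $Q_k$ explicit enough to apply the implicit function theorem and conclude that the escaping root is real — replacing the polynomial-degree count that sufficed in Lemma \ref{lemma:goodstep}.
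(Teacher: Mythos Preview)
Your proof is correct and complete. It follows the same overall outline as the paper's proof (openness of $\Pos$/$\Neg$ for the endpoint conditions, persistence of the existing simple nonzero roots, compactness to exclude spurious roots, and a local analysis at $t=0$), but the local analysis is genuinely different.

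The paper's argument at $t=0$ is a Rolle-type count: since $m_{L_0;k}^{(\mu_k)}(0)\ne 0$, the sign of $m_{L_\tau;k}^{(\mu_k)}$ is constant on a small interval $I_0$ for small $|\tau|$, so $m_{L_\tau;k}$ has at most $\mu_k$ zeros in $I_0$ counted with multiplicity; combined with the known multiplicity $\mu_k-1$ at $t=0$ and the (preserved) signs of $m_{L_\tau;k}$ at the endpoints of $I_0$, this forces exactly one additional simple real zero in the drop case. Your argument instead exploits the algebraic structure of the perturbation: since $\frakl_j^2=0$, the dependence of $L_\tau\Gamma_\bullet(t)$ on $\tau$ is affine with a rank-one increment, so $m_{L_\tau;k}(t)=m_{L_0;k}(t)+\tau\,Q_k(t)$ exactly; comparing the orders of vanishing (via Theorem~4 of \cite{GoSa0}) pins down the Taylor order of $Q_k$ at $0$, and the implicit function theorem applied to the reduced factor $t\,g_0(t)+\tau\,q(t)$ produces the unique new simple real root as a smooth function of $\tau$. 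Your route yields a little extra (smooth dependence of the escaping root on $\tau$) and makes transparent \emph{why} the new root is real, at the cost of invoking the rank-one observation and Hadamard's lemma for the smooth factorizations $m_{L_0;k}(t)=t^{\mu_0}g_0(t)$ and $Q_k(t)=t^{\mu_1}q(t)$; the paper's route is more robust in that it would work for any smooth one-parameter family landing in $\Bru_{\rho_1}$, not just the specific $L_0\lambda_j(\tau)$.
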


\begin{proof}
Let $\sigma_0 = \eta\rho_0$ and $\sigma_1 = \eta\rho_1$ so that
$\sigma_1 \triangleleft \sigma_0 = \sigma_1 a_j = (i_0 i_1) \sigma_1$.
As above, we have $L_\tau \in \Bru_{\rho_1}$ for $\tau \ne 0$.
Write $\mu_k = \mult_k(\sigma_0)$.
As in Equation \ref{equation:lemma24},
$\mult_k(\sigma_1) = \mu_k - 1$ for $i_0 \le k < i_1$, 
and
$\mult_k(\sigma_1) = \mu_k$ otherwise.

Since $\Pos$ and $\Neg$ are open sets,
for $\tau$ near $0$ we have
$\Gamma_{\tau}(t_{+}) \in \Pos$ and
$\Gamma_{\tau}(t_{-}) \in \Neg$
(where $\Gamma_{\tau}(t) = L_{\tau} \Gamma_\bullet(t)$).
This condition will be assumed from now on.

For $\tau$ near $0$, the $k(n-k) - \mult_k(\sigma_0)$
nonzero simple roots of $m_{L_\tau,k}$
remain nonzero and simple;
from the previous paragraph,
they are in the interior of $I_2$.
By compactness, for small $|\tau|$,
there are no new roots away from a small neighborhood of $t = 0$.

The root $t = 0$ has multiplicity $\mu_k$ for $m_{L_0;k}$.
Let $s_k \in \{\pm 1\}$ be the sign of $m_{L_0;k}^{(\mu_k)}(0) \ne 0$
so that $s_k m_{L_0;k}^{(\mu_k)}(t) > 0$
in a small neighborhood $I_0 \subset I_2$ of $t = 0$.
For small $|\tau|$, we likewise have
$s_k m_{L_\tau;k}^{(\mu_k)}(t) > 0$ in $I_0$.
For $\tau \ne 0$, the root $t = 0$ has multiplicity
$\mult_k(\sigma_1)$.
For $k < i_0$ or $k \ge i_1$, we have $\mult_k(\sigma_1) = \mu_k$, 
and therefore $t = 0$ is the only root in $I_0$ and we are done.
For $i_0 \le k < i_1$ we have $\mult_k(\sigma_1) = \mu_k - 1$.
The signs of $m_k$ at the extrema of $I_0$
together with the sign of $m_k^{(\mu_k)}$
and the multiplicity of the zero at $t=0$ imply that,
for small $|\tau|$, there is exactly one new nonzero root
of $m_{L_\tau;k}$ in $I_0 \smallsetminus \{0\}$;
this root is simple, as desired.
\end{proof}

\begin{lemma}
\label{lemma:goodmatrixLa}
Consider a flag-convex curve $\Gamma_\bullet: I \to \Lo_n^1$
and intervals $I_2 \subseteq I$ fixed, as above.
For all $\rho \in S_n$ there exist $\rho$-good matrices.
\end{lemma}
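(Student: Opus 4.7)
The plan is to mimic exactly the inductive strategy used in the proof of Lemma~\ref{lemma:goodmatrix}, now leveraging the dynamical analogue Lemma~\ref{lemma:goodstepLa} instead of the static Lemma~\ref{lemma:goodstep}. Since the base case has already been recorded in the text preceding Lemma~\ref{lemma:goodstepLa}, namely that $\IdI$ is $e$-good (here $\Gamma_1 = \Gamma_\bullet$, so all roots in $I$ coalesce at $t=0$ with multiplicities $\mult_k(\eta) = k(n-k)$, and $\Gamma_\bullet(t_{\pm}) \in \Pos, \Neg$ by Lemma~\ref{lem:ShGoSa0}), we are in good shape to run the induction over a fixed reduced word for $\rho$.

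Concretely, I would fix any reduced expression $\rho = a_{i_1} \cdots a_{i_l}$, where $l = \inv(\rho)$, and set $\rho_k = a_{i_1} \cdots a_{i_k}$ for $0 \le k \le l$, so that $\rho_0 = e$ and $\rho_l = \rho$, with $\rho_{k-1} \triangleleft \rho_k = \rho_{k-1} a_{i_k}$ at each step. Starting from the $\rho_0$-good matrix $L^{(0)} = \IdI$, I would construct inductively a $\rho_k$-good matrix $L^{(k)}$ as follows: given $L^{(k-1)}$ already $\rho_{k-1}$-good, apply Lemma~\ref{lemma:goodstepLa} to obtain $\epsilon_k > 0$ such that $L^{(k)} := L^{(k-1)} \lambda_{i_k}(\tau_k)$ is $\rho_k$-good for any choice of $\tau_k$ with $0 < |\tau_k| < \epsilon_k$. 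After $l$ such steps, $L^{(l)}$ is the desired $\rho$-good matrix.

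The only subtle point to verify is that the intervals $I$ and $I_2 = [t_-,t_+]$ are held fixed throughout the induction, and that at each inductive step the resulting $L^{(k)}$ continues to satisfy the three global conditions entering the definition of $\rho$-goodness: membership in the correct Bruhat cell, $\Gamma^{(k)}(t_+) \in \Pos$ and $\Gamma^{(k)}(t_-) \in \Neg$, and location of all new nonzero roots of $m_{L^{(k)},j}$ inside the interior of $I_2$, all simple. All three properties are exactly what Lemma~\ref{lemma:goodstepLa} guarantees for small enough $|\tau_k|$, so no new argument is required beyond a careful bookkeeping.

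The main (minor) obstacle I anticipate is a purely conceptual one: one must not worry that the new nonzero roots born at successive steps interact badly with earlier ones. This is automatic because the earlier nonzero roots are simple and lie away from $t=0$, hence depend continuously on $L^{(k-1)}$ and stay simple under perturbation, while the newly born roots near $t=0$ are controlled by the sign/multiplicity analysis already carried out in the proof of Lemma~\ref{lemma:goodstepLa}. Thus the induction goes through without any further ingredients, and the result follows.
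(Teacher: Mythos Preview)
Your proof is correct and follows essentially the same approach as the paper: fix a reduced word $\rho = a_{i_1}\cdots a_{i_l}$, start from the $e$-good matrix $\IdI$, and repeatedly apply Lemma~\ref{lemma:goodstepLa} along the prefixes $\rho_k$ to produce a $\rho$-good matrix. The additional commentary you provide about bookkeeping and the non-interaction of old and new roots is accurate but not needed beyond what Lemma~\ref{lemma:goodstepLa} already guarantees.
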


\begin{proof}
Consider a reduced word $\rho = a_{i_1} \cdots a_{i_l}$
where $l = \inv(\rho)$ is the number of inversions of $\rho$. 
For $k \le l$ define $\rho_k = a_{i_1} \cdots a_{i_k}$;
in particular, $\rho_0 = e$ and $\rho_l = \rho$.
As remarked, $Id$ is $\rho_0$-good.
Apply Lemma \ref{lemma:goodstepLa} to deduce that
if there exists a $\rho_k$-good matrix, then
there exists a $\rho_{k+1}$-good matrix.
The result follows by induction.
\end{proof}

\begin{proof}[Proof of Theorem \ref{th:La}]
For $L \in \Lo_n^1$,
set $\Gamma_{L}(t) = L \Gamma_0(t)$
and $m_{L;k} = m_{\Gamma_{L};k}$.
By Lemma \ref{lemma:goodmatrixLa},
there exists an $\eta$-good matrix $L_0$.
Each function $m_{L_0;k}$ has exactly $k(n-k)$ roots in
the interior of $I_2$, all simple.
Since $\Gamma_{L_0}(t_{+}) \in \Pos$ and $\Gamma_{L_0}(t_{-}) \in \Neg$,
there are no other roots.
These conditions are open and therefore hold
for the functions $m_{\tilde L_0;k}$ for any $\tilde L_0 \in A$
where $A$ is some sufficiently small open neighborhood of $L_0$.
It suffices to show that, for some such $\tilde L_0$, 
all roots are distinct.

Consider the quotient map $\Lo_n^1 \to \RR$
taking $L$ to $L_{2,1} + L_{3,2} + \cdots + L_{n,n-1}$;
pre-images of points form a family of parallel hyperplanes.
Let $S \subset A$, $L_0 \in S$,
be a convex neighborhood of $L_0$ in its hyperplane;
notice that $S$ is transversal to $\Gamma'_{L_0}(0)$.
The function $\Phi: S \times I_1 \to \Lo_n^1$ defined by
$\Phi(L,t) = \Gamma_L(t)$ is
a tubular neighborhood of the image $\Gamma_{L_0}[I_1]$.
This may require replacing the original interval $I_1$
by a smaller interval, still with $0$ in the interior:
notice that this is allowed.

Let $\rho \in S_n$ be different from $\eta$ and $\eta a_i$, $1 \le i < n$.
Then $\Bru_\rho \subset \Lo_n^1$ is a submanifold of codimension at least 2,
and therefore so is $\Phi^{-1}[\Bru_\rho] \subset S \times I_1$.
Let $X_\rho \subset S$ be its image under the projection onto $S$:
the subset $X_\rho \subset S$ has measure zero.
Let
\[ Y = S \smallsetminus
\bigcup_{\rho \in S_n \smallsetminus \{\eta, \eta a_1, \ldots, \eta a_{n-1}\}}
X_\rho: \]
the subset $Y \subseteq S$ has total measure and is therefore dense.
Notice that since $Y \subseteq S \subset A$,
if $\tilde L_0 \in Y$, then
the function $m_{\tilde L_0;k}$ has precisely $k(n-k)$ roots in $I_1$,
all simple and all in the interior of $I_1$.
We claim that in this case
all roots of the functions $m_{\tilde L_0;k}$ are also distinct,
as desired.

Indeed, assume by contradiction that
$m_{k_1}(t_0) = m_{k_2}(t_0)$, $k_1 < k_2$.
Take $\rho \in S_n$ such that
$\Gamma_{\tilde L_0}(t_0) = \Phi(\tilde L_0, t_0) \in \Bru_\rho$;
set $\sigma = \eta\rho$.
We have $\mult_{k_1}(\sigma) \ge 1$ and $\mult_{k_2}(\sigma) \ge 1$
whence $\sigma \notin \{e,a_1,\ldots,a_{n-1}\}$;
thus,
$\rho \in S_n \smallsetminus \{\eta, \eta a_1, \ldots, \eta a_{n-1}\}$.
Thus $(\tilde L_0, t_0) \in \Phi^{-1}[\Bru_\rho]$
and therefore $\tilde L_0 \in X_\rho$
and therefore $\tilde L_0 \notin Y$, a contradiction.
\end{proof}

\begin{remark}
\label{remark:itinerary}
In \cite{GoSa1}, we introduce the concept of {\em itinerary}
$\iti(\Gamma)$ of a locally convex curve $\Gamma$.
The itinerary is a word with letters in $S_n \smallsetminus \{e\}$
which gives important information about the curve:
essentially, it lists the moments of non-transversality
according to Bruhat cell.
The construction applies to flag-convex curves $\Gamma: I \to \Lo_n^1$.
In the proof of Theorem \ref{th:La},
we start with a curve $\Gamma_{\bullet}$ with itinerary
$\iti(\Gamma_{\bullet}) = \eta$ (a word with a single letter).
We construct a small perturbation $\Gamma_1$ of the original curve.
The itinerary is now a word of length $\frac{n^3 - n}{6}$,
each letter being a generator $a_k$ of $S_n$.
Each letter $a_k$ appears $k(n-k)$ times.
There are many different reduced words for $\eta$
and even for a given word we may have more that one itinerary.
The itinerary of the curve $\Gamma_{N_0,L_0}$
constructed in Example \ref{example:abacdcbacb}
is $\iti(\Gamma_{N_0,L_0}) = dcbcdabacbcbabdcbadc$.
Conjecture \ref{conj:Wr} is equivalent to the statement
that for any flag-convex curve $\Gamma: I \to \Lo_n^1$
the letter $a_k$ appears at most $k(n-k)$ times in the itinerary
$\iti(\Gamma)$.
\end{remark}

\newpage

\end{document}